\newtheorem{thm}{Theorem}[section]
\newtheorem*{thmA'}{Theorem A'}
\newtheorem{cor}[thm]{Corollary}
\newtheorem{con}[thm]{Conjecture}
\newtheorem{lem}[thm]{Lemma}
\newtheorem{prop}[thm]{Proposition}
\theoremstyle{definition}
\newtheorem{defn}[thm]{Definition}
\newtheorem*{ack}{Acknowledgments}
\theoremstyle{remark}
\newtheorem{rem}{Remark}
\numberwithin{equation}{section}
\renewcommand{\widetilde}{\tilde}
\begin{document}
	\title[A nonlocal isoperimetric problem on $\mathbb{H}^n$]{Existence and nonexistence results for a nonlocal isoperimetric problem on $\mathbb{H}^n$}	
\author[H. Li]{Haizhong Li}
\address{Department of Mathematical Sciences, Tsinghua University, Beijing 100084, P.R. China}
\email{\href{mailto:lihz@tsinghua.edu.cn}{lihz@tsinghua.edu.cn}}
\author[B. Yang]{Bo Yang}
\address{Department of Mathematical Sciences, Tsinghua University, Beijing 100084, P.R. China}
\email{\href{mailto:ybo@tsinghua.edu.cn}{ybo@tsinghua.edu.cn}}
\address{Institut f\"{u}r Diskrete Mathematik und Geometrie, Technische Universit\"{a}t Wien, Wiedner Hauptstra{\ss}e 8-10, 1040 Wien, Austria}
\email{\href{mailto:bo.yang@tuwien.ac.at}{bo.yang@tuwien.ac.at}}
	
%	\date{\today}
\subjclass[2020]{49Q10, 49Q20, 81V35}
\keywords{Nonlocal isoperimetric problem, hyperbolic space, geodesic balls, unique minimizers, nonexistence}

	\begin{abstract}
		In Euclidean space $\mathbb{R}^n$, the minimization problem of a nonlocal isoperimetric functional with a competition between perimeter and a nonlocal term derived from the negative power of the distance function, has been extensively studied. In this paper, we investigate this nonlocal isoperimetric problem in hyperbolic space $\mathbb{H}^n$, we prove that the geodesic balls are unique minimizers (up to hyperbolic isometries) for small volumes $m$ and obtain nonexistence results for large volumes $m$ under certain ranges of the exponent in the nonlocal term.
	\end{abstract}	
	\maketitle
		\tableofcontents
\section{Introduction}
Let $n\geq 2$, $0<\alpha<n$ and $\gamma>0$. For any measurable set $F\subset\mathbb{H}^n$, we consider the following functional:
\begin{equation}\label{defn-Functional}
	\mathcal{E}(F)=P(F)+\gamma NL_{\alpha}(F),
\end{equation}
where
\begin{equation}\label{defn-Perimeter}
 P(F)=\sup\left\{\int_{F}\mathrm{div}_g X\,dV_g:X\in C_c^1(\mathbb{H}^n,\mathrm{T}\mathbb{H}^n),|X|_g\leq 1\right\},
 \end{equation}
 denotes the perimeter of $F$ in the sense of  De Giorgi and the second term $NL_{\alpha}(F)$ is defined as
 \begin{equation}\label{defn-doubleint}
 	NL_{\alpha}(F)=\int_{\mathbb{H}^n}\int_{\mathbb{H}^n}{\frac{\chi_F(x)\chi_F(y)}{{d_g(x,y)}^{\alpha}}}\,dV_g(x)dV_g(y),
 \end{equation}
 where $\chi_F$ denotes the characteristic function of $F$. Furthermore,  in both \eqref{defn-Perimeter} and \eqref{defn-doubleint}, $g$ denotes the metric of $\mathbb{H}^n$ and in this paper, all quantities with subscript $g$ indicate that they are considered in $\mathbb{H}^n$. In the following, we investigate the minimization problem
 \begin{equation}\label{Prob-mini}
 	E(m)=\inf_{|F|_g=m}\mathcal{E}(F).
 \end{equation}
The minimization problem \eqref{Prob-mini} of functional \eqref{defn-Functional} has a physical background in Euclidean space $\mathbb{R}^n$ and has been extensively studied. The case where $\alpha=1$ in $\mathbb{R}^3$ is particularly significant, recalling Gamow's liquid drop model for atomic nuclei \cite{Gamow1930}. In this model, the nucleus is viewed as a region $\Omega\subset\mathbb{R}^3$ densely packed with nucleons (protons and neutrons) at constant density, implying the nucleon count is proportional to the volume of $\Omega$. The perimeter term in the energy functional corresponds to the surface tension binding the nucleus and the second term represents the Coulomb repulsion between protons (here all physical constants have been normalized to 1 for simplicity). Then the minimization problem is to ask that whether stable atomic nuclei at arbitrarily high atomic numbers can exist in the framework of the classical liquid drop model of nuclear matter.

In the case of $\mathbb{R}^n$, the minimization problem of functional \eqref{defn-Functional} is highly nontrivial since by the isoperimetric inequality, the perimeter  term attains its minimum on balls for fixed volume, while the second term attains its maximum on balls by the Riesz's rearrangement inequality (cf. \cite[Theorem 3.7]{Leib-Loss01}). For set $F\subset\mathbb{R}^n$ measurable with volume $|F|=m$, we scale $F$ by $F\to m^{\frac{1}{n}}U$, then $|U|=1$ and using the scaling properties of $\mathbb{R}^n$, we have
\begin{equation}\label{Ex-scale}
	\mathcal{E}(F)=m^{\frac{n-1}{n}}\left(P(U)+\gamma m^{\frac{n+1-\alpha}{n}}NL_{\alpha}(U)\right),
\end{equation}
which suggests that the perimeter term is dominant if volume $m$ is small and the nonlocal term $NL_{\alpha}(\cdot)$ is dominant if volume $m$ is large. Hence, we expect that there exist minimizers for small $m$ and there exist no minimizers for large $m$. Indeed, we define the critical volume $m_{*}$ to be the volume such that the value of the functional $\mathcal{E}(\cdot)$ of the ball with volume $m_{*}$ equals that of two balls with both volume $m_{*}/2$, spaced infinitely far apart, i.e.,
\begin{equation*}
	\mathcal{E}\left(\left(\frac{m_{*}}{b_n}\right)^{\frac{1}{n}}B\right)=2\mathcal{E}\left(\left(\frac{m_{*}}{2 b_n}\right)^{\frac{1}{n}}B\right),
\end{equation*}
where $B$ denotes the unit ball in $\mathbb{R}^n$ and $b_n=|B|$. In the follwing of paper, we also denote $\omega_{n-1}:=|\partial B|$, which is the area of the unit sphere in $\mathbb{R}^n$. Note that by coarea formula, we have $\omega_{n-1}=n b_n$.  A direct calculation shows
\begin{equation}\label{Eq-Cri}
	m_{*}=\left(\frac{2^{\frac{1}{n}}-1}{1-2^{\frac{\alpha-n}{n}}}\frac{\omega_{n-1}}{\gamma NL_{\alpha}(B)}\right)^{\frac{n}{n+1-\alpha}}b_n.
\end{equation}
The significance of $m_{*}$ is that for $m>m_{*}$, the value of the functional $\mathcal{E}(\cdot)$ of two balls with both volume $m/2$, spaced infinitely far apart is less than that of a ball with volume $m$. Moreover, it is conjectured that
\begin{con}[cf. \cite{Choksi-Peletier11}]\label{Con-Cri}
	Given $n\geq 2$, $0<\alpha<n$ and $\gamma>0$, for $m\leq m_{*}$, the ball of volume $m$ uniquely minimizes $\mathcal{E}(\cdot)$ among measurable sets $F\in\mathbb{R}^n$ with $|F|=m$, and for $m>m_{*}$ no minimizer exists.
	\end{con}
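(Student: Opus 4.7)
The plan is to treat the small-volume case $m\le m_{*}$ (existence and uniqueness of the ball) and the large-volume case $m>m_{*}$ (nonexistence) separately, with concentration-compactness serving as the workhorse in both regimes.

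For $m\le m_{*}$, I would first establish existence of a minimizer. Because $\mathbb{R}^{n}$ is noncompact, minimizing sequences can split into pieces escaping to infinity, and this is handled by the standard concentration-compactness alternative: up to translations, any minimizing sequence either converges in $L^{1}_{\mathrm{loc}}$ to a set of finite perimeter, or one extracts a countable family of ``bubbles'' $F_{i}$ with $\sum_{i}|F_{i}|=m$ satisfying $E(m)\ge \sum_{i}E(|F_{i}|)$. Using the scaling identity \eqref{Ex-scale} and the defining property of $m_{*}$, the map $\mu\mapsto \mathcal{E}(B_{\mu})$ (with $B_{\mu}$ the ball of volume $\mu$) is strictly subadditive for total mass $\le m_{*}$, which forces the bubbling alternative to collapse into a single bubble and yields existence. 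To identify the minimizer as a ball, I would combine the Fusco-Maggi-Pratelli quantitative isoperimetric inequality
\begin{equation*}
P(F)-P(B_{m})\ge c(n)\,m^{(n-1)/n}\,\mathcal{A}(F)^{2},
\end{equation*}
where $\mathcal{A}(F)$ is the Fraenkel asymmetry and $B_{m}$ the ball of volume $m$, with a one-sided Lipschitz-type estimate $NL_{\alpha}(B_{m})-NL_{\alpha}(F)\le C(n,\alpha)\,m^{(2n-\alpha)/n}\,\mathcal{A}(F)$ obtained via a symmetric decreasing rearrangement plus a careful interpolation. For small enough $m$ the quadratic perimeter gain dominates the linear nonlocal loss, forcing $\mathcal{A}(F)=0$ and hence $F=B_{m}$ up to translation.

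For $m>m_{*}$, the target is nonexistence. Supposing for contradiction that a minimizer $F$ exists with $|F|=m$, the natural move is to split $F$ into two pieces of mass $m/2$ and slide them apart by a translation of length $R\to\infty$. A median-hyperplane cutting construction, combined with control of how $P$ and $NL_{\alpha}$ behave in the slide-at-infinity limit (the nonlocal interaction between distant pieces vanishes at rate $R^{-\alpha}$, while the perimeter is additive), gives
\begin{equation*}
\mathcal{E}(F)\ge 2\,E(m/2),
\end{equation*}
whereas the definition of $m_{*}$ and the scaling formula \eqref{Ex-scale} show $2\mathcal{E}(B_{m/2})<\mathcal{E}(B_{m})$ precisely when $m>m_{*}$, contradicting minimality \emph{provided} the optimal candidate at mass $m/2$ is in fact the ball.

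The main obstacle --- and the reason Conjecture \ref{Con-Cri} remains open --- is precisely the sharpness of the threshold $m_{*}$. The existing works (Kn\"upfer-Muratov, Lu-Otto, Frank-Lieb, Julin, Bonacini-Cristoferi) establish ball-uniqueness only for $m\le m_{1}$ and nonexistence only for $m\ge m_{2}$ with generally $m_{1}<m_{*}<m_{2}$; the intermediate window is uncovered by any known method. Closing it would require either a \emph{sharp global} stability inequality for $\mathcal{E}$, ruling out distorted or elongated non-spherical competitors whose energy lies only marginally above $\mathcal{E}(B_{m})$, or a complete classification of limits of minimizing sequences beyond the two-ball dichotomy. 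Both are currently out of reach, and this is the heart of the difficulty; any honest proof plan has to confront it head-on rather than circumvent it.
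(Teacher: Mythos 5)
The statement you are asked to prove is Conjecture \ref{Con-Cri}, which the paper explicitly presents as an \emph{open problem} (``So far, Conjecture \ref{Con-Cri} has not been completely solved''); the paper offers no proof of it, only a survey of partial results. Your proposal is, to its credit, honest about this: the final paragraph concedes that the sharp threshold $m_{*}$ is out of reach and that the plan cannot close the conjecture. So there is no proof here to compare against a proof in the paper --- neither exists --- and the proposal should be read as a (largely accurate) summary of the known partial results (a)--(d) listed in the introduction, not as a resolution of the statement.

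Beyond the admitted global gap, two of your intermediate steps are also flawed as stated. First, in the nonexistence argument, cutting $F$ by a median hyperplane into $F_{1},F_{2}$ \emph{increases} total perimeter: $P(F_{1})+P(F_{2})=P(F)+2H^{n-1}(F\cap\Pi)$, so the inequality $\mathcal{E}(F)\ge 2E(m/2)$ does not follow from additivity of the perimeter plus decay of the cross-interaction; the actual nonexistence proofs (Kn\"upfer--Muratov, Frank--Nam) must show that the lost interaction energy between the two halves \emph{dominates} the perimeter created by the cut, which is exactly why they require $\alpha\le 2$ and only reach some $m_{2}$ that is not known to equal $m_{*}$. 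The argument is moreover circular as written, since it needs the minimizer at mass $m/2$ to be a ball. Second, in the existence argument, strict subadditivity of $\mu\mapsto\mathcal{E}(B_{\mu})$ for total mass $\le m_{*}$ controls only configurations of balls; the concentration-compactness bubbles $F_{i}$ need not be balls, so the inequality $E(m)\ge\sum_{i}E(|F_{i}|)$ cannot be collapsed using the ball energy alone --- one needs strict subadditivity of $E(\cdot)$ itself, which is precisely what is unknown up to $m_{*}$. Finally, the quantitative-isoperimetric-versus-Lipschitz-nonlocal comparison forces $\mathcal{A}(F)=0$ only when $c(n)m^{(n-1)/n}$ beats $C(n,\alpha)m^{(2n-\alpha)/n}$, i.e.\ for $m$ below a smallness threshold $m_{c_{0}}$ that no known argument identifies with $m_{*}$. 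In short: the proposal reproduces the state of the art for small and large $m$ (with the perimeter-of-the-cut error noted above), but the statement itself remains a conjecture.
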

So far, Conjecture \ref{Con-Cri} has not been completely solved. Kn$\ddot{\mathrm{u}}$pfer and Muratov \cites{Knupfer-Muratov13,Knupfer-Muratov14} proved the following results:
\begin{itemize}
	\item[(a)] For every $n\geq 2$, $\alpha\in(0,n)$ and $\gamma>0$, there exists a constant $m_{c_1}>0$ depending on $n,\alpha$ and $\gamma$ such that $E(m)$ has a minimizer for every $m\leq m_{c_1}$.
	\item[(b)] For every $n\geq 2$, $\alpha\in(0,2)$ and $\gamma>0$, there exists a constant $m_{c_2}>0$ depending on $n,\alpha$ and $\gamma$ such that $E(m)$ has no minimizer for every $m>m_{c_2}$.
	\item[(c)] For fixed $\gamma>0$, if $n=2$ and $\alpha>0$ is suffciently small, then $m_{c_1}=m_{c_2}=m_{*}$, and balls are unique minimizers for $E(m)$ when $m\leq m_{*}$.
	\item[(d)] For fixed $\gamma>0$, if $n=2$ and $\alpha<2$, or if $3\leq n\leq 7$ and $\alpha<n-1$, there exists $0<m_{c_0}\leq m_{c_1}$, depending on $n,\alpha$ and $\gamma$ such that balls are unique minimizers for $E(m)$ with $m<m_{c_0}$.
\end{itemize}

Since then, much progress has been made on the basis of these results. Julin \cite{Julin14} extended the result of (d) to all $n\geq 3$ and $\alpha=n-2$. Bonacini and Cristoferi \cite{Bonacini-Cristoferi14} extended the results of (c) and (d) to all $n\geq 2$. Figalli, Fusco, Maggi, Millot and Morini \cite{Figalli15} extended the result of (d) to to all $n\geq 2$ and $\alpha\in(0,n)$. Frank and Nam \cite{Frank-Nam21} extended the result of (a) to $m_{c_1}=m_{*}$, they also showed that (b) holds for $0<\alpha<n$ and $\alpha\leq 2$. There are also some quantitative estimates for $\alpha$, $m_{c_0}$ and $m_{c_2}$. In the case of $n=2$, Muratov and Zaleski \cite{Muratov15} showed that the result of (c) holds for $\alpha\leq 0.034$.  In the case of $n=3$ and $\alpha=1$, Frank, Killip and Nam \cite{Frank-Killp-Nam16} proved that $m_{c_2}\leq 2.38m_{*}$, while Chodosh and Ruohoniemi \cite{Chodosh-Ruohomiemi25} showed that $m_{c_0}\geq 0.28m_{*}$. We also refer readers to \cites{Alama21,Carazzato23,Carazzato25,Choksi17,Choksi22,Frank19,Frank-Lieb15,Goldman15,Goldman25,Muratov18,Novaga21,Novaga22,Pascale25} for some other interesting results on the nonlocal isoperimetric problem.

Now we turn back to the case of $\mathbb{H}^n$. In this situation, the perimeter term also attains its minimum on geodesic balls with fixed volume by the isoperimetric inequality, and the second term attains its maximum on geodesic balls with fixed volume by the Symmetrization Lemma (cf. \cite[Page 225]{Beckner93}). Hence the minimization problem \eqref{Prob-mini} in $\mathbb{H}^n$ is also highly nontrivial and we expect that there are similar results in $\mathbb{H}^n$ compared with $\mathbb{R}^n$.

Before stating our main results, we make some measure-theoretic notations. We say that a function $u\in L^1(\mathbb{H}^n)$ has bounded variation, i.e. $u\in BV(\mathbb{H}^n)$, if there holds
\begin{equation*}
	\int_{\mathbb{H}^n}{|\nabla u|_g}\,dV_g:=\sup\left\{\int_{\mathbb{H}^n}u\,\mathrm{div}_g X\,dV_g: X\in C_c^1(\mathbb{H}^n,\mathrm{T}\mathbb{H}^n), |X|_g\leq 1\right\}<\infty.
\end{equation*}
For any measurable set $F\subset\mathbb{H}^n$, we say that $F$ has finite perimeter if its characteristic function $\chi_F\in BV(\mathbb{H}^n)$ and then its perimeter can be denoted as $P(F):=\int_{\mathbb{H}^n}{|\nabla\chi_F|}\,dV_g$. The $k$-dimensional Hausdorff measure with $k\in[0,n]$ is denoted as $H^k(F)$.

For any $H^n$-measurable set $F$, its upper and lower densities at a point $x\in\mathbb{H}^n$ are defined respectively by 
\begin{equation}\label{Defn-uppden}
	\overline{D}(F,x):=\limsup_{r\to 0}\frac{|F\cap B_r(x)|_g}{|B_r(x)|_g},\quad  \underline{D}(F,x):=\liminf_{r\to 0}\frac{|F\cap B_r(x)|_g}{|B_r(x)|_g},
\end{equation}
where $B_r(x)\subset\mathbb{H}^n$ is the open geodesic ball with center $x\in\mathbb{H}^n$ and radius $r$. If the upper and lower densities are equal, their common value will be called the density of $F$ at $x$, and it will be denoted by $D(F,x)$. 

The essential interior $\mathring{F}^M$ of $F$ is then defined as the set of all $x\in\mathbb{H}^n$ for which $D(F,x)=1$, while the essential closure $\bar{F}^{M}$ of $F$ is defined as the set of all points where $\overline{D}(F,x)>0$. The essential boundary $\partial^M F$ of $F$ is defined as the set of points where $\overline{D}(F,x)>0$ and $\overline{D}(\mathbb{H}^n\setminus F,x)>0$.  For any two sets $A, B\subset\mathbb{H}^n$, we denote $A\Delta B:=(A\setminus B)\cup (B\setminus A)$ as the symmetric difference of $A$ and $B$, then by the Lebesgue–Besicovitch Differentiation Theorem (cf. \cite[Theorem 1.32]{Evans15}), we have
\begin{equation*}
	|F\Delta\mathring{F}^M|_g=|F\Delta\bar{F}^{M}|_g=0.
\end{equation*}
We also have that (cf. \cite[Page 46]{Ambrosio01})
\begin{equation}\label{Eq-Essenb}
	\partial^M F=\mathbb{H}^n\setminus(\mathring{F}^M\cup(\mathring{F}^c)^M).
\end{equation}

The reduced boundary $\partial^{*}F$ of a set of finite perimeter F is defined as a set of points $x\in\partial^M F$ such that the measure-theoretic normal exists at $x$, i.e., if the following limit exists:
\begin{equation*}
	\nu_F(x):=\lim_{r\to 0}\frac{\int_{B_r(x)}{\nabla{\chi_{F}}\,dV_g}}{\int_{B_r(x)}{|\nabla{\chi_{F}}|_g\,dV_g}}\quad \text{and}\quad |\nu_F(x)|_g=1,
\end{equation*}
where $\nabla\chi_F$ is the vector-valued Radon measure associated with the distributional derivative of $\chi_F$ and $|\nabla{\chi_{F}}|_g$ coincides with the $H^{n-1}$ measure restricted to $\partial^M F$. If $F$ is a set of finite perimeter, then by De Giorgi’s structure theorem (cf. \cite[Theorem 3.59]{Ambrosio-Fusco-Pallara00}, \cite[Theorem 15.9]{Maggi-12}), we have $P(F)=H^{n-1}(\partial^{*}F)$. Also, by a result of Federer (cf. \cite[Theorem 3.61]{Ambrosio-Fusco-Pallara00}), we have $\partial^{*} F\subset\partial^M F$ and $H^{n-1}(\partial^M F\setminus\partial^{*} F)$=0. 
\begin{rem}
	For any set $A\subset\mathbb{H}^n$, we can define the relative perimeter of $F$ in $A$ as $P(F;A):=|\nabla{\chi_{F}}|(A)=H^{n-1}(\partial^{*}F\cap A)$. We say that $F$ is a set of locally finite perimeter, if for every compact set $K\subset\mathbb{H}^n$, we have $P(F;K)<\infty$.
\end{rem}

We say that a set $F$ of finite perimeter is essentially bounded, if its essential closure $\bar{F}^{M}$ is bounded. Also, we say $F$ is decomposable, if there exists a partition $(A,B)$ of $F$ with both $|A|_g>0$ and $|B|_g>0$, such that $P(F)=P(A)+P(B)$. Otherwise, we say that $F$ is indecomposable.
\begin{rem}
	All the measure-theoretic notations mentioned above suit the case of $\mathbb{R}^n$ if the corresponding quantities are considered in $\mathbb{R}^n$.
\end{rem}

Our first result says that the functional $\mathcal{E}(\cdot)$ admits geodesic ball as the unique minimizer (up to hyperbolic isometries) when the volume $m$ is small.
\begin{thm}\label{Thm-Existence}
	For all $n\geq 2$, $0<\alpha<n$ and $\gamma>0$, there exists a positive constant $m_1$ depending on $n,\alpha$ and $\gamma$ such that for $0<m\leq m_1$, the unique minimizer (up to hyberbolic isometries) to the minimization problem \eqref{Prob-mini} with volume $m$ is given by a geodesic ball. 
\end{thm}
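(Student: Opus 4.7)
My plan is to adapt to $\mathbb{H}^n$ the strategy used for the Euclidean problem by Kn\"upfer--Muratov, Bonacini--Cristoferi, and Figalli--Fusco--Maggi--Millot--Mirini, via four ingredients: (i) existence of a minimizer $E_m$ of $E(m)$; (ii) an $L^1$-closeness estimate forcing $E_m$ to be near a geodesic ball of volume $m$ (up to isometry); (iii) $\varepsilon$-regularity upgrading (ii) to $C^{1,\beta}$-closeness; and (iv) a Fuglede-type stability for nearly spherical sets on $\mathbb{H}^n$.

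For (i), using the geodesic ball $B_m^{\ast}$ of volume $m$ as a competitor and $NL_\alpha\geq 0$, minimizing sequences $\{F_k\}$ satisfy $P(F_k)\leq P(B_m^{\ast})+o(1)$. A concentration-compactness argument on $\mathbb{H}^n$ then excludes splitting of the mass of $F_k$ to infinity: for small $m$, the binding energy $\mathcal{E}(B_m^{\ast})$ is strictly less than the energy of two or more geodesic balls with the same total volume placed at infinite geodesic distance, since the perimeter term dominates and satisfies $P(B_m^{\ast})<2P(B_{m/2}^{\ast})$ by the isoperimetric inequality. Modulo hyperbolic isometries, the support stays bounded, and BV-compactness yields a minimizer $E_m$, essentially bounded and indecomposable. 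For (ii), from $\mathcal{E}(E_m)\leq \mathcal{E}(B_m^{\ast})$ and the direct estimate $NL_\alpha(B_m^{\ast})=O(m^{2-\alpha/n})\to 0$ as $m\to 0$ (by the Euclidean approximation of small geodesic balls), the isoperimetric deficit of $E_m$ tends to $0$, so the quantitative isoperimetric inequality on $\mathbb{H}^n$ yields $|\iota_m(E_m)\,\Delta\,B_m^{\ast}|_g\to 0$ for some hyperbolic isometry $\iota_m$, at an explicit rate. For (iii), $E_m$ is a $(\Lambda,r_0)$-quasi-minimizer of the perimeter, with $\Lambda$ controlled by $n,\alpha,\gamma,m$ and the diameter bound from (i); the standard $\varepsilon$-regularity of almost-minimizers on Riemannian manifolds then upgrades $L^1$-closeness to $C^{1,\beta}$-closeness of $\partial E_m$ to the geodesic sphere $\partial B_m^{\ast}$.

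For (iv), I would prove that for any nearly spherical set $E$ around a geodesic ball $B_r$, with $|E|_g=|B_r|_g$, the same barycenter, and $\partial E$ a normal graph $\varphi$ over $\partial B_r$ of sufficiently small $C^1$-norm, one has
\begin{equation*}
    \mathcal{E}(E)-\mathcal{E}(B_r)\geq c\,\|\varphi\|_{H^1(\partial B_r)}^{2}.
\end{equation*}
The proof expands $P$ and $NL_\alpha$ to second order around $B_r$ and decomposes $\varphi$ in intrinsic spherical harmonics on $\partial B_r$; the volume and barycenter constraints kill the modes $\ell=0$ and $\ell=1$, leaving a strictly positive perimeter Hessian with a spectral gap, while the nonlocal Hessian is subdominant for small $r$ (a scaling check gives nonlocal/perimeter Hessian ratio of order $r^{n+1-\alpha}$, which is small since $\alpha<n$). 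Combining (i)--(iv) and fixing the barycenter by a further isometry forces $E_m$ to coincide with a geodesic ball, which completes the proof.

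The main obstacle is step (iv). On $\mathbb{H}^n$, the perimeter Hessian is governed by the hyperbolic Jacobi operator $-\Delta_{\partial B_r}-|A|^2-\mathrm{Ric}(\nu,\nu)$, whose eigenvalues and eigenspaces must be computed intrinsically via hyperbolic spherical harmonics, and the second variation of $NL_\alpha$ requires a uniform-in-$r$ estimate of the Riesz-type kernel near a geodesic ball; moreover the translation zero modes must be handled via the full hyperbolic isometry group rather than $\mathbb{R}^n$-translations, which is the most delicate part of the adaptation, since Euclidean scaling is unavailable on $\mathbb{H}^n$.
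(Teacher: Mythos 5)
Your four-step architecture (existence, $L^1$-closeness via the quantitative isoperimetric inequality, regularity upgrade to a nearly spherical graph, Fuglede-type stability with the nonlocal Hessian subdominant) is the same as the paper's, and your scaling check for step (iv) matches its conclusion exactly: the perimeter deficit is bounded below by $\sinh^{n-1}r\,\|u\|^2_{W^{1,2}}$ while the nonlocal deficit is bounded above by $\sinh^{2n-\alpha}r\,\|u\|^2_{W^{1,2}}$, so a non-spherical minimizer forces $\sinh^{n+1-\alpha}r\gtrsim 1$, impossible for small $m$. However, you have placed the main difficulty in the wrong step: the perimeter Fuglede estimate on $\mathbb{H}^n$, including the spectral decomposition and the treatment of the barycenter modes, is already available (B\"ogelein--Duzaar--Scheven), and the paper simply cites it; the genuinely new analytic work on that side is the uniform-in-$r$ Fuglede-type upper bound for $NL_\alpha$, which you only mention in passing.

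The more serious gap is in steps (i) and (iii), which as written lean on the Euclidean device of rescaling to a fixed reference volume. The almost-minimality constant of $E_m$ (the Lagrange multiplier of the volume constraint) blows up like $1/r\sim m^{-1/n}$, so ``standard $\varepsilon$-regularity'' only yields estimates at scales $\lesssim r$, and one must still show that the graph function $u$ in the normalization $\partial E=\{(x,r(1+u(x)))\}$ has $C^1$-norm tending to zero \emph{uniformly} as $m\to 0$. This requires a quasiminimality statement with constants independent of $m$ at scale proportional to $r$ (the paper proves $E$ is a perimeter $(3,r/C_6)$-quasiminimizer, using the volume-adjusting map $\Phi_{\lambda}$ in the upper half-space model as the substitute for dilations), together with density and porosity estimates yielding the two-sided inclusion $B_{(1-\delta)r}\subset E\subset B_{(1+\delta)r}$ before any graph representation is available. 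Similarly, ruling out dichotomy in step (i) by comparing the ball with two balls at infinity does not control a general minimizing sequence; the paper instead shows constructively that any competitor can be replaced by one of no larger energy confined to a ball of radius $(1+C_3 r^{1/2n})r$, via a truncation-plus-$\Phi_{\lambda}$ argument. Without these uniform, scale-$r$ ingredients, the passage from ``a minimizer exists and is $L^1$-close to a ball'' to ``the minimizer is a $C^1$-small normal graph over $\partial B_r$'' does not close.
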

\begin{rem}
	It remains an open problem to provide  explicit lower bounds on $m_1$.
\end{rem}
\begin{rem}
	Through a more refined analysis of the constants appearing in the estimates, we can further show that if $0<\alpha\leq\bar{\alpha}$ and $0<\gamma\leq\bar{\gamma}$ for two fixed $\bar{\alpha}\in(0,n)$ and $\bar{\gamma}>0$, then the positive constant $m_1$ in Theorem \ref{Thm-Existence} can be chosen to depend only on $n$, $\bar{\alpha}$ and $\bar{\gamma}$.
\end{rem}
The proof of this theorem in $\mathbb{H}^n$ differs significantly from that in $\mathbb{R}^n$. In $\mathbb{R}^n$, scaling transformations possess particularly convenient geometric properties. In particular, they allow one to rescale the volume to an arbitrary prescribed value while preserving the shape of the underlying set, and geodesic balls are mapped to geodesic balls under such transformations. Consequently, the analysis of the shape of minimizers in the small-volume regime can be reduced to the study of the corresponding minimization problem for a fixed reference volume $b_n$, as presented in \cites{Bonacini-Cristoferi14,Knupfer-Muratov13,Knupfer-Muratov14,Figalli15}. 

However, such transformations are lacking in $\mathbb{H}^n$. Fortunately, after comparing several common models of $\mathbb{H}^n$, we have adopted the $\Phi_{\lambda}$-transformation (see \S\ref{Subsec-model} for details) in the upper half-space model of $\mathbb{H}^n$, which has the advantage that by adjusting the value of parameter $\lambda$, the volume of any set in $\mathbb{H}^n$ can be scaled to a given multiple. By means of the $\Phi_{\lambda}$-transformation, we can establish several fundamental properties of the minimizers, including their regularity, essential boundedness and indecomposability, and we can further deduce the existence of minimizers for all sufficiently small volumes $m\leq m_0$ for some positive $m_0$ depending only on $n,\alpha$ and $\gamma$. 

When proving that geodesic balls are the unique minimizers for small volumes, we encounter a new difficulty. Since our $\Phi_{\lambda}$-transformation cannot map geodesic balls to geodesic balls, the argument for a fixed volume of $b_n$ in $\mathbb{R}^n$ is inapplicable. A key observation is that if the minimizer $E$ satisfies $|E|_g=|B_r|_g=m$ for $m\leq m_0$ and some positive $r$, then $E$ is contained in a geodesic ball of radius $(1+C_3 r^{\frac{1}{2n}})r$ for some uniform constant $C_3$. Combining this observation with a notion of quasiminimizer of the perimeter, we are able to show that $\partial E$ can be viewed as a radial graph over $\mathbb{S}^{n-1}$ with the radial function close to $r$ provided the volume $m$ is small enough. Then by means of proof by contradiction and Fuglede's type estiamtes for the perimeter and the nonlocal term, we are able to prove that for small volumes $m$, geodesic balls are unique minimizers up to hyperbolic isometries.
		
On the other hand, by adopting similar ideas from \cite{Knupfer-Muratov14} and leveraging the $\Phi_{\lambda}$-transformation, we can prove that the functional $\mathcal{E}(\cdot)$ does not admit a minimizer for large volume $m$ with an extra assumption that $\alpha<2$.
\begin{thm}\label{Thm-Nonexistence}
	For all $n\geq 2$, $0<\alpha<2$ and $\gamma>0$, there exists a positive constant $m_2$ depending on $n,\alpha$ and $\gamma$ such that for $m\geq m_2$, the minimization problem \eqref{Prob-mini} does not admit a minimizer with volume $m$.
\end{thm}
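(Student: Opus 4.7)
The proof proceeds by contradiction, adapting the strategy of Kn\"upfer--Muratov \cite{Knupfer-Muratov14} to the hyperbolic setting. Suppose for contradiction that there exist a sequence $m_k \to \infty$ and corresponding minimizers $E_k$ of \eqref{Prob-mini} with $|E_k|_g = m_k$. By properties of minimizers established in the course of proving Theorem~\ref{Thm-Existence}, each $E_k$ is essentially bounded and of finite perimeter. The plan is to produce, for $m_k$ sufficiently large, a competitor of strictly smaller energy by splitting $E_k$ and sending one of the resulting pieces to infinity.

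For a fixed minimizer $E = E_k$, consider a subset $A \subset E$ of positive measure whose complement $B := E \setminus A$ is also of positive measure. Choosing a sequence of hyperbolic isometries $g_j$ with $d_g(A, g_j(B)) \to \infty$, the competitor $F_j := A \cup g_j(B)$ has volume $m_k$, and a direct computation, using the isometry invariance of $P$ and $NL_{\alpha}$ together with $\alpha > 0$ (so that the cross interaction $I_\alpha(A, g_j(B)) \to 0$ by dominated convergence), yields
\begin{equation*}
\lim_{j \to \infty} \mathcal{E}(F_j) = \mathcal{E}(E) + 2\, H^{n-1}(\partial^* A \cap \mathring{E}^M) - 2\gamma \int_A \int_B \frac{dV_g(x)\,dV_g(y)}{d_g(x,y)^{\alpha}}.
\end{equation*}
Minimality of $E$ therefore forces the necessary inequality
\begin{equation*}
\gamma \int_A \int_B \frac{dV_g(x)\,dV_g(y)}{d_g(x,y)^{\alpha}} \leq H^{n-1}(\partial^* A \cap \mathring{E}^M)
\end{equation*}
for every admissible $A$, and the goal is to rule out this inequality for a suitable $A$ when $m_k$ is large.

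To exhibit such an $A$, I would take $A = E \cap \Omega$ for a well-chosen region $\Omega \subset \mathbb{H}^n$ and perform a slicing-and-averaging argument: averaging the right-hand side over a family of test regions (for instance, geodesic balls $B_r(x_0)$ with $x_0$ ranging in $E$ and $r$ in a suitable interval) controls the mean interface area in terms of $P(E)$ and the volume $m_k$, while the double integral on the left-hand side admits a lower bound via a Riesz-type kernel estimate. The hypothesis $\alpha < 2$ is precisely what ensures that the lower bound for the interaction over a macroscopic portion of $E$ dominates the corresponding perimeter cost as $m_k \to \infty$. The $\Phi_\lambda$-transformation plays the role of the Euclidean dilation here: it normalizes the scale of $E_k$, so that the constants appearing in both the Riesz lower bound and the averaged slicing upper bound are uniform in $k$, making the direct comparison of the two sides possible.

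The principal obstacle is the absence of genuine homothetic scaling in $\mathbb{H}^n$, compounded by the exponential volume growth of hyperbolic balls. Large hyperbolic minimizers could a priori be very elongated or spread out, which would cause naive slicing estimates to degenerate. Circumventing this requires combining a uniform diameter estimate (stemming from essential boundedness together with the $\Phi_\lambda$-normalization) with a careful choice of the test family used in the averaging argument. Once these hyperbolic counterparts of the Euclidean tools are in place, the strict violation of the necessary inequality above for $m_k$ large contradicts the minimality of $E_k$, yielding the nonexistence claim.
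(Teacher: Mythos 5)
Your overall skeleton --- split $E$ into two pieces, send one to infinity, extract the necessary inequality $\gamma\int_A\int_B d_g(x,y)^{-\alpha}\,dV_g\,dV_g \le H^{n-1}(\partial^* A\cap\mathring{E}^M)$, then violate it by slicing --- is indeed the Kn\"upfer--Muratov strategy that the paper follows, and your necessary inequality is correct. But the proposal leaves the two steps that actually carry the proof unexecuted, and in one place attributes the key estimate to tools that cannot deliver it.

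First, the linchpin of the whole argument is the quantitative diameter bound $\mathrm{diam}\,\bar{E}^M\le C_6' m$. You say this ``stems from essential boundedness together with the $\Phi_\lambda$-normalization.'' Neither works: essential boundedness is purely qualitative (it gives $\mathrm{diam}<\infty$ with no rate), and $\Phi_\lambda$ is only bi-Lipschitz with constants $\max\{\lambda^{\pm 2},1\}$ that blow up as the volume ratio does, so it cannot normalize a large set to unit scale without destroying all the constants. The paper instead derives the diameter bound from indecomposability plus the \emph{uniform lower density bound} $|E\cap B_1(x)|_g\ge c_6\min\{1,m\}$ for every $x\in\bar{E}^M$ (Lemma \ref{Lem-ULB}, itself a consequence of the nonoptimality criterion): one packs $\sim d$ disjoint unit balls centered at points of $\bar{E}^M$ spread along the diameter, each carrying volume at least $c_6$, whence $m\gtrsim d$. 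Without this bound, the kernel estimate $d_g(x,y)^{-\alpha}\ge d^{-\alpha}$ in your interaction lower bound is useless, since $d$ could a priori be enormous compared to $m$.

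Second, the ``slicing-and-averaging'' step is asserted rather than performed, and a single averaged application of the necessary inequality does not obviously close the argument: averaging the interface area over test regions only recovers something of order $P(E)\sim m$, while the interaction over a macroscopic split is of order $m^2 d^{-\alpha}\sim m^{2-\alpha}$, and for $1\le\alpha<2$ these are not directly comparable in the favorable direction. The paper's mechanism is instead a \emph{differential inequality obtained by iterating the cut}: slicing by horospheres $\Pi_t=\{x_n=t\}$ in the upper half-space model, setting $U(t)=|E\cap\{1<x_n<t\}|_g$ and $\rho(t)=H^{n-1}(\mathring{E}^M\cap\Pi_t)$, the necessary inequality gives $\rho(t)\ge \frac{\gamma m}{2d^{\alpha}}U(t)$, while the coarea formula (with $|\nabla x_n|_g=x_n$) gives $U'(t)=\rho(t)/t$. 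Integrating yields $U(t)\ge c_6\,(t\,\mathrm{e}^{-d/4})^{\gamma m/(2d^{\alpha})}$ on $[\mathrm{e}^{d/4},\mathrm{e}^{d/2}]$, and evaluating at $t=\mathrm{e}^{d/2}$ together with $U\le m$ and $d\le C_6'm$ forces $m\ge 2c_6\,\mathrm{e}^{c\,m^{2-\alpha}}$, which bounds $m$ precisely because $\alpha<2$. This is where the hypothesis enters concretely --- not through a Riesz-type kernel estimate. (You also miss the shortcut that for $\alpha<1$ the lower diameter bound $\mathrm{diam}\,\bar{E}^M\ge C_5'm^{1/\alpha}$ against the upper bound $\le C_6'm$ already forces $m$ bounded, with no slicing needed.) As written, your proposal identifies the right necessary condition but does not contain the argument that contradicts it.
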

\begin{rem}
	We find it difficult to extend Theorem \ref{Thm-Nonexistence} to the case of $\alpha=2$, as in the work of Frank and Nam \cite{Frank-Nam21}, the reason being that we are unsure how to obtain the estimate in \cite[Lemma 7]{Frank-Nam21}.
\end{rem}
\begin{rem}
	Motivated by the classical results in $\mathbb{R}^n$ (cf. \cite[Theorem 2.9]{Bonacini-Cristoferi14} and \cite[Theorem 1.5]{Figalli15}), we intend to calculate the first and second variations of the functional \eqref{defn-Functional} in our subsequent work, so as to provide explicit bounds for the volume within which geodesic balls are local minimizers. As an application, we aim to derive similar quantitative results regarding parameter $m_1$ as those presented in \cite{Chodosh-Ruohomiemi25}.
\end{rem}
The paper is organized as follows: In \S\ref{Sec-pre}, we collect some preliminaries which will be used later in this paper, including the upper-half space model of $\mathbb{H}^n$, some isperimetric-type inequalities in $\mathbb{H}^n$, two notions of quasiminimizer of the perimeter, and Fuglede’s type estimates of $P(E)$ and $NL_{\alpha}(E)$ for a nearly spherical set $E\subset\mathbb{H}^n$. In \S\ref{Sec-Basic es}, we prove some basic properties for the minimizer of the functional $\mathcal{E}(\cdot)$, including the regularity result, the boundedness, the indecomposability and then prove a uniform lower density bound. During this process, we also derive a general criterion on a set of finite perimeter being not a minimizer. In \S\ref{Sec-Existence}, we prove that there exists a minimizer if the volume $m$ is small and it is contained in a geodesic ball with small radius. In \S\ref{Sec-Ball}, we prove that the functional $\mathcal{E}(\cdot)$ admits geodesic ball as unique minimizers (up to hyperbolic isometries) when volumes $m$ are small. In \S\ref{Sec-Nonex}, we firstly prove that both the the minimization value \eqref{Prob-mini} and the diameter of the essential closure of a minimizer can be two-sided controlled by the volume $m$, and then complete the proof that there exist no minimizers for large volumes.

\begin{ack}
	The research was supported by NSFC Grant No.12471047 and  China Postdoctoral Science Foundation No.2024M751605.
\end{ack}
\section{Preliminaries}\label{Sec-pre}
\subsection{Upper half-space model of $\mathbb{H}^n$}\label{Subsec-model} The $n$-dimensional hyperbolic space $\mathbb{H}^n$ can be viewed as $\mathbb{H}^n=(\mathbb{R}^n_{+},g)$, where $\mathbb{R}^n_{+}=\{(x_1,x_2,\dots,x_n)\in\mathbb{R}^n:x_n>0\}$ is the upper half-space and the metric $g=\frac{1}{x_n^2}\delta$. Here $\delta$ denotes the canonical metric of $\mathbb{R}^n$. Usually, we denote this model as $U^n$, and call it the upper half-space model of $\mathbb{H}^n$.
\begin{prop}{\cite[Theorem 4.6.1]{Ratcliffe19}}
	For any two points $x=(x_1,\dots,x_n), y=(y_1,\dots,y_n)\in U^n$, the metric $d_g$ on $U^n$ satisfies
	\begin{equation}\label{Ex-metric}
		\cosh d_g(x,y)=1+\frac{|x-y|^2}{2x_n y_n},
	\end{equation}
where $|x-y|$ denotes the Euclidean distance between $x$ and $y$.
\end{prop}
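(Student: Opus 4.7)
The plan is to prove the formula by first verifying it along a vertical geodesic and then extending to arbitrary pairs of points via the isometry group of $U^n$. Set $Q(x,y):=1+\frac{|x-y|^2}{2x_ny_n}$; the goal is to show $Q(x,y)=\cosh d_g(x,y)$ for all $x,y\in U^n$.

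First I would recall that the orientation-preserving isometry group of $U^n$ is generated by horizontal translations $(x_1,\dots,x_{n-1},x_n)\mapsto(x_1+a_1,\dots,x_{n-1}+a_{n-1},x_n)$, Euclidean rotations about the $x_n$-axis, homotheties $x\mapsto\lambda x$ with $\lambda>0$, and inversions in hemispheres centered on the boundary hyperplane $\{x_n=0\}$. Each of these preserves the metric $g=x_n^{-2}\delta$, hence the geodesic distance $d_g$. The next step is to check that $Q(x,y)$ is likewise invariant under each generator. For translations and rotations this is immediate; for the homothety $x\mapsto\lambda x$ one has $|\lambda x-\lambda y|^2=\lambda^2|x-y|^2$ and $(\lambda x_n)(\lambda y_n)=\lambda^2x_ny_n$, so the ratio is unchanged.

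The remaining and only slightly delicate case is inversion $\iota$ in the hemisphere of radius $r$ centered at $p=(p_1,\dots,p_{n-1},0)$. Using the standard identities
\begin{equation*}
|\iota(x)-\iota(y)|^2=\frac{r^4|x-y|^2}{|x-p|^2\,|y-p|^2},\qquad \iota(x)_n=\frac{r^2 x_n}{|x-p|^2},
\end{equation*}
one computes
\begin{equation*}
\frac{|\iota(x)-\iota(y)|^2}{2\,\iota(x)_n\,\iota(y)_n}=\frac{r^4|x-y|^2/(|x-p|^2|y-p|^2)}{2\,r^2 x_n y_n/(|x-p|^2|y-p|^2)}=\frac{|x-y|^2}{2x_ny_n},
\end{equation*}
confirming $Q\circ(\iota\times\iota)=Q$.

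Finally, given arbitrary $x,y\in U^n$, I would use a horizontal translation and rotation followed (if necessary) by an inversion to move $x$ and $y$ onto the vertical axis, say to $(0,\dots,0,a)$ and $(0,\dots,0,b)$ with $0<a\le b$. The vertical ray $\{(0,\dots,0,t):t>0\}$ is a geodesic (being fixed pointwise by rotations about the axis), so
\begin{equation*}
d_g(x,y)=\int_a^b\frac{dt}{t}=\log(b/a),\qquad \cosh d_g(x,y)=\tfrac{1}{2}\bigl(b/a+a/b\bigr)=\frac{a^2+b^2}{2ab},
\end{equation*}
which equals $1+\frac{(b-a)^2}{2ab}=Q(x,y)$. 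Combined with the joint isometry-invariance of both sides, this establishes the identity on all of $U^n\times U^n$. The main technical point is the inversion invariance of $Q$; once that short computation is in place, the reduction to the axial case and the final one-dimensional integral are routine.
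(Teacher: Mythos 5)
The paper does not prove this proposition at all --- it is quoted directly from Ratcliffe \cite[Theorem 4.6.1]{Ratcliffe19} --- so there is no in-paper argument to compare against. Your proof is correct and is essentially the standard one: verify that both $\cosh d_g(x,y)$ and $Q(x,y)$ are invariant under a generating set of isometries of $U^n$, reduce to two points on the vertical axis, and evaluate there. One small slip: in the inversion computation the denominator should read $2\,r^4 x_n y_n/(|x-p|^2|y-p|^2)$, not $2\,r^2 x_n y_n/(\cdots)$, since $\iota(x)_n\,\iota(y)_n=\frac{r^2x_n}{|x-p|^2}\cdot\frac{r^2y_n}{|y-p|^2}$; with the $r^2$ as written the ratio would pick up a spurious factor of $r^2$, whereas with $r^4$ it cancels exactly as your final line claims. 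Two further points worth making explicit if you write this up: (a) in the axial case you should note that $\log(b/a)$ is not just the length of the vertical segment but the actual distance, which follows from the elementary estimate $\int|\gamma'|_\delta/\gamma_n\,dt\ge\int|\gamma_n'|/\gamma_n\,dt\ge\log(b/a)$ for any competing path $\gamma$; and (b) the reduction of an arbitrary pair $x,y$ to the axis is cleanest by first applying an inversion centered at a boundary endpoint of the geodesic through $x$ and $y$ (sending that endpoint to $\infty$ and the geodesic to a vertical line) and then translating. Neither point is a gap, just a matter of making the routine steps airtight.
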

For any fixed $\lambda>0$, we define the transformation $\Phi_{\lambda}$ which maps $x=(x_1,\dots,x_n)$ to $x':=\Phi_{\lambda}(x)=(x_1',\dots,x_n')$ as follows:
\begin{align}
	\Phi_{\lambda}: U^n&\to U^n,\notag\\
	(x_1,\dots,x_{n-1},x_n)&\mapsto(x_1,\dots,x_{n-1},\lambda x_n).\label{Defn-Phi}
\end{align}
Obviously, $\Phi_{\lambda}$ is bijective with $\Phi_{\lambda}^{-1}=\Phi_{{\lambda}^{-1}}$.  In the rest of this subsection, we show some basic estimates about the transformation $\Phi_{\lambda}$ which will be used in this paper.
\begin{prop}\label{Prop-Lip}
	Given fixed $\lambda>0$ and $x,y\in U^n$, there holds
	\begin{equation}\label{Es-dis}
		\min\{{\lambda}^{-2},1\}d_g(x,y)\leq d_g(\Phi_{\lambda}(x),\Phi_{\lambda}(y))\leq \max\{{\lambda}^{-2},1\} d_g(x,y).
	\end{equation}
\end{prop}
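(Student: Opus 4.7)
The plan is to establish \eqref{Es-dis} directly from the length characterization of the hyperbolic distance, $d_g(p,q)=\inf_\gamma L_g(\gamma)$ over piecewise $C^1$ curves $\gamma$ joining $p$ and $q$ in $U^n$, by controlling how the diagonal stretching $\Phi_\lambda$ distorts the metric at each tangent vector. Since $\gamma\mapsto \Phi_\lambda\circ\gamma$ is a bijection between curves joining $x$ to $y$ and curves joining $\Phi_\lambda(x)$ to $\Phi_\lambda(y)$, once one has a pointwise comparison between $|d\Phi_\lambda(v)|_g$ and $|v|_g$, integration and taking infimum will immediately yield a comparison between $d_g(\Phi_\lambda x,\Phi_\lambda y)$ and $d_g(x,y)$.

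First I would fix $x\in U^n$ and $v=(v_1,\dots,v_n)\in T_x U^n$. Using $g=x_n^{-2}\delta$ and $d\Phi_\lambda(v)=(v_1,\dots,v_{n-1},\lambda v_n)$, and observing that the base point moves from $x$ to $\Phi_\lambda(x)$ whose last coordinate is $\lambda x_n$, a short calculation yields
\[
\frac{|d\Phi_\lambda(v)|_g}{|v|_g}\;=\;\frac{1}{\lambda}\sqrt{\frac{\sum_{i=1}^{n-1}v_i^2+\lambda^2 v_n^2}{\sum_{i=1}^{n}v_i^2}}.
\]
The fraction under the square root is a convex combination of $1$ and $\lambda^2$ (with weights $\sum_{i<n}v_i^2/\sum_i v_i^2$ and $v_n^2/\sum_i v_i^2$), hence lies in $[\min(1,\lambda^2),\max(1,\lambda^2)]$, so the whole ratio lies in $[\min(\lambda^{-1},1),\max(\lambda^{-1},1)]$. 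Integrating this pointwise bound along any admissible curve and taking the infimum, I obtain
\[
\min(\lambda^{-1},1)\, d_g(x,y)\;\le\; d_g(\Phi_\lambda x,\Phi_\lambda y)\;\le\; \max(\lambda^{-1},1)\, d_g(x,y).
\]
A case split on $\lambda\ge 1$ versus $\lambda\le 1$ then shows $\min(\lambda^{-2},1)\le\min(\lambda^{-1},1)$ and $\max(\lambda^{-1},1)\le\max(\lambda^{-2},1)$, which implies the stated inequality \eqref{Es-dis}.

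There is essentially no serious obstacle in this plan; the main content is the ratio computation above, and the slight mismatch between the sharper exponent $\lambda^{\pm 1}$ that the length argument naturally produces and the stated exponent $\lambda^{\pm 2}$ is absorbed by simple monotonicity. An alternative would be to argue directly from the distance formula \eqref{Ex-metric}: writing $|\Phi_\lambda x-\Phi_\lambda y|^2=|x-y|^2+(\lambda^2-1)(x_n-y_n)^2$ and using $(x_n-y_n)^2\le |x-y|^2$, one gets
\[
\min(\lambda^{-2},1)\,\bigl(\cosh d_g(x,y)-1\bigr)\;\le\;\cosh d_g(\Phi_\lambda x,\Phi_\lambda y)-1\;\le\;\max(\lambda^{-2},1)\,\bigl(\cosh d_g(x,y)-1\bigr),
\]
and then converts to distances via $\cosh t-1=2\sinh^2(t/2)$ together with the convexity inequality $\sinh(at)\le a\sinh t$ for $a\in[0,1]$, $t\ge 0$. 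I would prefer the length-integration approach since it is intrinsic to the Riemannian geometry of $U^n$ and avoids these hyperbolic-function manipulations.
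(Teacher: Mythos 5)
Your proposal is correct, and your primary argument takes a genuinely different route from the paper. The paper works entirely with the closed-form distance formula $\cosh d_g(x,y)=1+\frac{|x-y|^2}{2x_ny_n}$: it bounds the transformed numerator above by $\lambda^2|x-y|^2$ and below by $|x-y|^2$, and then converts the resulting bound $\cosh d_g(\Phi_\lambda x,\Phi_\lambda y)\ge \lambda^{-2}\cosh d_g(x,y)+1-\lambda^{-2}$ back into a distance bound via the concavity-type inequality $a\cosh t+1-a\ge\cosh(at)$ for $a\in(0,1]$. You instead bound the operator norm of $d\Phi_\lambda$ pointwise with respect to $g$ and integrate along curves; this is cleaner, intrinsic, and in fact yields the sharper two-sided Lipschitz constants $\min(\lambda^{-1},1)$ and $\max(\lambda^{-1},1)$ (which your horizontal/vertical-segment picture shows are optimal), from which the stated $\lambda^{\mp2}$ bounds follow by monotonicity exactly as you say. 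The only mild price is that you must invoke the length-infimum characterization of $d_g$ and the bijection $\gamma\mapsto\Phi_\lambda\circ\gamma$ on admissible curves, both of which are standard. Your sketched alternative via \eqref{Ex-metric} is essentially the paper's proof, differing only in that you pass through $\cosh t-1=2\sinh^2(t/2)$ and $\sinh(at)\le a\sinh t$ instead of applying the convexity inequality to $\cosh$ directly; that variant likewise recovers the sharper exponent $\lambda^{-1}$. Either route is complete and correct.
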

\begin{proof}
	Denote $x':=\Phi_{\lambda}(x)=(x_1',\dots,x_n')$ and $y':=\Phi_{\lambda}(y)=(y_1',\dots,y_n')$. Then we have that $x_i'=x_i$ for $1\leq i\leq n-1$ and $x_n'=\lambda x_n$, so does $y'$. Hence by \eqref{Ex-metric}, there holds
	\begin{equation}\label{Eq-dis1}
		\cosh d_{g}(\Phi_{\lambda}(x),\Phi_{\lambda}(y))=\cosh d_{g}(x',y')=1+\frac{\sum_{i=1}^{n-1}(x_i-y_i)^2+{\lambda}^2(x_n-y_n)^2}{2{\lambda}^2 x_n y_n}.
			\end{equation}
$\bullet\,\,\bf{Case\,1.}$ If $\lambda\geq 1$, then by \eqref{Eq-dis1}, we obviously have
	\begin{equation}\label{Eq-dis2}
		\cosh d_{g}(\Phi_{\lambda}(x),\Phi_{\lambda}(y))\leq 1+\frac{{\lambda}^2\sum_{i=1}^{n-1}(x_i-y_i)^2+{\lambda}^2(x_n-y_n)^2}{2{\lambda}^2 x_n y_n}=\cosh d_g(x,y).
	\end{equation}
On the other hand, 
\begin{align}
	\cosh d_{g}(\Phi_{\lambda}(x),\Phi_{\lambda}(y))&\geq 1+\frac{|x-y|^2}{2{\lambda}^2 x_n y_n}\notag\\
	&=\frac{1}{{\lambda}^2}\cosh d_g(x,y)+1-\frac{1}{{\lambda}^2}\notag\\
	&\geq \cosh\left(\frac{1}{{\lambda}^2}d_g(x,y)\right).\label{Eq-dis3}
\end{align}
Indeed, if we let $a=\frac{1}{{\lambda}^2}\in(0,1]$ and consider the function $f(t)=a\cosh t+1-a-\cosh(at)$ for $t\geq 0$, then since $f(0)=0$ and $f'(t)=a(\sinh t-\sinh(at))\geq 0$, we get \eqref{Eq-dis3}. Hence combining \eqref{Eq-dis2} with \eqref{Eq-dis3} and using the fact that $g(t)=\cosh t$ is an increasing function in $t$ gives \eqref{Es-dis} in the case of $\lambda\geq 1$. 

\noindent$\bullet\,\,\bf{Case\,2.}$ If $0<\lambda<1$, we can argue as {\bf{Case 1}} to get the estimate
\begin{equation*}
		d_g(x,y)\leq d_g(\Phi_{\lambda}(x),\Phi_{\lambda}(y))\leq\frac{1}{{\lambda}^2}d_g(x,y).
\end{equation*}
Combining the above two cases gives the proof of this proposition. 
\end{proof}
Then we show that how the volume and perimeter of a measurable set change under the transformation $\Phi_{\lambda}$.
\begin{prop}\label{Prop-VP}
	For $F\subset\mathbb{H}^n$ measurable and of finite perimeter, we have
	\begin{align}
		|\Phi_{\lambda}(F)|_g&={\lambda}^{1-n}|F|_g,\label{Re-volume}\\
		\min\{{\lambda}^{2-2n},1\}P(F)&\leq P(\Phi_{\lambda}(F))\leq \max\{{\lambda}^{2-2n},1\}P(F).\label{Re-perimeter}
	\end{align}
\end{prop}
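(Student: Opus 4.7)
The plan is to handle the two assertions separately, starting with the cleaner volume identity and then reducing the perimeter estimate to a bi-Lipschitz/Hausdorff-measure argument that makes essential use of Proposition \ref{Prop-Lip}.

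For \eqref{Re-volume}, I would simply invoke the explicit volume element $dV_g = x_n^{-n}\,dx_1\cdots dx_n$ of the upper half-space model and perform the Euclidean change of variables $y=\Phi_\lambda(x)$, whose Jacobian determinant is $\lambda$. The direct calculation
\begin{equation*}
|\Phi_\lambda(F)|_g=\int_{\Phi_\lambda(F)}y_n^{-n}\,dy=\int_{F}(\lambda x_n)^{-n}\lambda\,dx=\lambda^{1-n}\int_{F}x_n^{-n}\,dx=\lambda^{1-n}|F|_g
\end{equation*}
yields the identity in a single step, and in particular shows that $\Phi_\lambda$ sends sets of finite volume to sets of finite volume.

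For the perimeter bounds \eqref{Re-perimeter}, my strategy is to combine the bi-Lipschitz nature of $\Phi_\lambda$ established in Proposition \ref{Prop-Lip} with the Federer-type representation $P(F)=H^{n-1}(\partial^{*}F)=H^{n-1}(\partial^{M}F)$ recalled above. First, Proposition \ref{Prop-Lip} identifies $\Phi_\lambda$ as a smooth diffeomorphism of $U^n$ which is Lipschitz with constant $L:=\max\{\lambda^{-2},1\}$, while its inverse $\Phi_{\lambda^{-1}}$ is Lipschitz with constant $L':=\max\{\lambda^{2},1\}$. Second, since the $g$-measure scales by the uniform factor $\lambda^{1-n}$ under $\Phi_\lambda$ and $\Phi_\lambda$ is bi-Lipschitz with respect to $d_g$ (so that $\Phi_\lambda(B_r(x))\subset B_{Lr}(\Phi_\lambda(x))$ and conversely), the upper and lower densities from \eqref{Defn-uppden} are preserved under $\Phi_\lambda$; hence $\Phi_\lambda(\partial^{M}F)=\partial^{M}(\Phi_\lambda F)$ and $\Phi_\lambda(F)$ has finite perimeter. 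Finally, applying the standard Lipschitz contraction inequality $H^{n-1}(f(A))\le(\mathrm{Lip}(f))^{n-1}H^{n-1}(A)$ to $f=\Phi_\lambda$ yields
\begin{equation*}
P(\Phi_\lambda F)=H^{n-1}(\Phi_\lambda(\partial^{M}F))\le L^{n-1}H^{n-1}(\partial^{M}F)=\max\{\lambda^{2-2n},1\}\,P(F),
\end{equation*}
and the same inequality applied to $\Phi_{\lambda^{-1}}$ gives $P(F)\le(L')^{n-1}P(\Phi_\lambda F)$, which rearranges to the matching lower bound $\min\{\lambda^{2-2n},1\}\,P(F)\le P(\Phi_\lambda F)$.

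The only mildly delicate point is verifying that $\Phi_\lambda$ maps essential boundaries to essential boundaries (and thus that the perimeter is computed on $\Phi_\lambda(\partial^{M}F)$ rather than on an a priori unrelated set): this is where both Proposition \ref{Prop-Lip} and the already-proved uniform measure scaling are used together, via the density characterization of $\partial^{M}F$. Once this bookkeeping is in place, the perimeter estimate reduces to the standard geometric-measure-theoretic fact that Lipschitz maps contract $(n-1)$-dimensional Hausdorff measure by at most the $(n-1)$-st power of their Lipschitz constant. I do not expect any genuine obstacle beyond carefully tracking the two cases $\lambda\ge 1$ and $0<\lambda<1$ inside the $\min/\max$ expressions.
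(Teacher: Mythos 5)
Your proposal is correct and follows essentially the same route as the paper: the volume identity via the explicit change of variables with the volume element $x_n^{-n}\,dx$, and the perimeter bounds by first establishing $\Phi_\lambda(\partial^M F)=\partial^M\Phi_\lambda(F)$ and then applying the Lipschitz--Hausdorff inequality $H^{n-1}(f(A))\le(\mathrm{Lip}\,f)^{n-1}H^{n-1}(A)$ to $\Phi_\lambda$ and to $\Phi_{\lambda^{-1}}$. The only cosmetic difference is in the density step: you sandwich geodesic balls using the bi-Lipschitz bounds of Proposition \ref{Prop-Lip} together with the uniform volume scaling, whereas the paper notes that $\Phi_\lambda^{-1}(B_r(x))$ is a Euclidean ellipsoid of bounded eccentricity and invokes the corresponding differentiation theorem; both arguments yield the positivity of the relevant upper densities, which is all that is needed.
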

\begin{proof}
	Given arbitrary $x\in U^n$, since the metric $g$ satisfies $g=\frac{1}{x_n^2}\delta$, where $\delta$ is the canonical metric of $\mathbb{R}^n$. Then the volume element of $U^n$ at $x$ satisfies
	\begin{equation}\label{Eq-volumeform}
		dV_g(x)=\frac{1}{x_n^n}dx_1\cdots dx_n,
	\end{equation}
	where $dx:=dx_1\cdots dx_n$ is the volume element on $\mathbb{R}^n$. Hence using \eqref{Eq-volumeform}, we have
	\begin{align*}
		|\Phi_{\lambda}(F)|_g&=\int_{\Phi_{\lambda}(F)}{dV_g(x')}=\int_{\Phi_{\lambda}(F)}{\frac{1}{(x_n')^n}}\,dx_1'\cdots dx_n'\\
		&=\lambda^{1-n}\int_F{\frac{1}{x_n^n}}\,dx_1\cdots dx_n\\
		&=\lambda^{1-n}|F|_g,
	\end{align*}
which gives \eqref{Re-volume}.

To get \eqref{Re-perimeter}, we firstly prove that $\partial^M \Phi_{\lambda}(F)=\Phi_{\lambda}(\partial^M F)$. For a general set $E\subset U^n$, it is obviously that $\Phi_{\lambda}(E)\cap B_r(x)=\Phi_{\lambda}(E\cap\Phi_{\lambda}^{-1}(B_r(x))$ for any $x\in U^n$, which gives
\begin{equation}\label{Eq-P1}
	\frac{|\Phi_{\lambda}(E)\cap B_r(x)|_g}{|B_r(x)|_g}=\frac{|\Phi_{\lambda}(E\cap\Phi_{\lambda}^{-1}(B_r(x)))|_g}{|B_r(x)|_g}=\frac{|E\cap\Phi_{\lambda}^{-1}(B_r(x))|_g}{|\Phi_{\lambda}^{-1}(B_r(x))|_g}.
\end{equation} 
By \cite[Theorem 4.6.4; Exercise 4.6.6]{Ratcliffe19}, we see that the geodesic ball $B_r(x)$ in $U^n$ is actually an Euclidean ball contained in the upper half space $\mathbb{R}^n_{+}$ with center $x(r)=(x_1,\dots,x_{n-1},x_n\cosh r)$ and radius $x_n\sinh r$. Hence $\Phi_{\lambda}^{-1}(B_r(x))$ is indeed an Euclidean ellipsoid which shrinks to the point $\Phi_{\lambda}^{-1}(x)$ as $r$ tends to 0. Since the (upper) density of a point keeps invariant regardless of the shape of the sets used to shrink to the point, as long as they are bounded and have bounded eccentricity, we deduce from \eqref{Eq-P1} that the (upper) density of $\Phi_{\lambda}(E)$ at the point $x$ equals to that of $E$ at the point $\Phi_{\lambda}^{-1}(x)$. If we take $E=F$ and $E=U^n\setminus F$ respectively, and notice that $\Phi_{\lambda}(U^n\setminus F)=U^n\setminus{\Phi_{\lambda}(F)}$, then using \eqref{Defn-uppden} and \eqref{Eq-P1} gives $\partial^M \Phi_{\lambda}(F)=\Phi_{\lambda}(\partial^M F)$.

Finally, by Proposition \ref{Prop-Lip}, $\Phi_{\lambda}$ is a Lipschitz map with Lipschitz constant $\mathrm{Lip}\,\Phi_{\lambda}\leq \max\{{\lambda}^{-2},1\}$ and since $F$ is of finite perimeter, we have
\begin{align*}
	P(\Phi_{\lambda}(F))&=H^{n-1}(\partial^M \Phi_{\lambda}(F))=H^{n-1}(\Phi_{\lambda}(\partial^M F))\\
	&\leq (\mathrm{Lip}\,\Phi_{\lambda})^{n-1}H^{n-1}(\partial^M F)\\
	&\leq \max\{{\lambda}^{2-2n},1\}P(F).
\end{align*}
The left hand side of inequality \eqref{Re-perimeter} also holds since $F=\Phi_{{\lambda}^{-1}}(\Phi_{\lambda}(F))$ and $\Phi_{\lambda}^{-1}$ is a Lipschitz map of $U^n$ with Lipschitz constant $\mathrm{Lip}_{\lambda^{-1}}\leq \max\{\lambda^2,1\}$ by Proposition \ref{Prop-Lip}. This completes of the proof of inequality \eqref{Re-perimeter}.
\end{proof}

\subsection{Isoperimetric-type inequalities in $\mathbb{H}^n$} In this subsection, we give some results concerning the isoperimetric-type inequalities in $\mathbb{H}^n$, including the relative isoperimetric inequality, the quantitative isoperimetric inequality and an Euclidean-like isoperimetric inequality for measurable sets in $\mathbb{H}^n$. We begin with a simple property of concave functions which will be used in the proof of the relative isoperimetric inequality.
\begin{lem}\label{Lem-concave}
	Assume that $\phi(x):[0,+\infty)\to[0,+\infty)$ is a $C^2$, concave function. Then for any $a,b>0$, we have 
	\begin{equation}\label{In-Concave}
		\phi(a)+\phi(b)\geq \phi(a+b)+\phi(0).
	\end{equation}
Moreover, if $\phi$ is strictly concave, the inequality is strict.
\end{lem}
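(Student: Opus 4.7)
The plan is to reduce the two-point inequality to the monotonicity of $\phi'$, which is the analytic content of concavity. The cleanest route is to fix $a>0$ and view the function
\[
g(t):=\phi(t+a)-\phi(t),\qquad t\in[0,\infty),
\]
as a one-variable quantity on which concavity of $\phi$ bites directly. Since $\phi\in C^2$ with $\phi''\le 0$, the derivative $\phi'$ is non-increasing on $[0,\infty)$; consequently
\[
g'(t)=\phi'(t+a)-\phi'(t)\le 0,
\]
so $g$ is non-increasing. Evaluating at $t=b$ and $t=0$ yields $g(b)\le g(0)$, which rearranges exactly to the claimed inequality
\[
\phi(a)+\phi(b)\ \ge\ \phi(a+b)+\phi(0).
\]

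For the strict case, I would note that when $\phi$ is strictly concave, $\phi'$ is strictly decreasing, so $\phi'(t+a)<\phi'(t)$ for every $t\ge 0$ as long as $a>0$. Thus $g'(t)<0$ on $[0,b]$ (using $b>0$), which gives $g(b)<g(0)$ and hence the strict inequality. An equivalent, perhaps more symmetric, way to present this is via the integral identity
\[
\bigl(\phi(a)-\phi(0)\bigr)-\bigl(\phi(a+b)-\phi(b)\bigr)=\int_0^a\!\int_0^b -\phi''(s+t)\,dt\,ds,
\]
and the right-hand side is non-negative because $\phi''\le 0$, and strictly positive if $\phi''<0$ (with the minor technical point that strict concavity only guarantees $\phi''\le 0$ pointwise but with $\phi''<0$ on a set that forces the double integral to be positive).

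I do not anticipate any serious obstacles: the statement is a textbook fact and the only mild care needed is in the strict-concavity case, where I would prefer the argument via $g'(t)<0$ for $t\in[0,b]$ since it avoids any discussion of the measure-theoretic support of $\phi''$ when $\phi$ is only known to be strictly concave rather than strictly concave with $\phi''<0$ everywhere. Given the $C^2$ hypothesis in the statement, either formulation is acceptable, and I would opt for the $g$-monotonicity proof for brevity.
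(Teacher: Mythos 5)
Your proof is correct and rests on exactly the same mechanism as the paper's: concavity gives $\phi'$ non-increasing, and the inequality follows by comparing the increments $\phi(a)-\phi(0)$ and $\phi(a+b)-\phi(b)$ (the paper does this via the mean value theorem at two points $t_1<t_2$, you do it by integrating the monotonicity of $g(t)=\phi(t+a)-\phi(t)$, which is the same idea). Your treatment of the strict case via $\phi'$ strictly decreasing is also equivalent to, and slightly cleaner than, the paper's remark that $\phi''$ cannot vanish identically on an interval.
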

\begin{proof}
	Without loss of generality, we assume that $0<a\leq b$, then by Langrange's mean value theorem, there exist $t_1\in(0,a)$ and $t_2\in(b,a+b)$ such that 
	\begin{align}
		\phi(a)-\phi(0)&=\phi'(t_1)a,\label{Eq-con1}\\
		\phi(a+b)-\phi(b)&=\phi'(t_2)a.\label{Eq-con2}
	\end{align}
	Since $\phi$ is $C^2$ and concave, we have $\phi''\leq 0$ and hence $\phi'(t_1)\geq \phi'(t_2)$, since $t_1<t_2$. This gives \eqref{In-Concave} obviously.  Moreover, if $\phi$ is strictly concave, then $\phi''\leq 0$ and $\phi''$ is not identically 0 on any interval of $[0,+\infty)$, which implies that $\phi'(t_1)> \phi'(t_2)$ and hence the inequality in  \eqref{In-Concave} is strict.
\end{proof}
In the rest of this paper, we denote $B_r$ as the geodesic ball in $\mathbb{H}^n$ with radius $r$, if there is no need to mention where the center is. Let $\xi:[0,+\infty)\to[0,+\infty)$ be an increasing function (which we usually call the isoperimetric function) such that 
\begin{equation}\label{Defn-xi}
	P(B_r)=\xi(|B_r|_g).
\end{equation}
If we denote $g(r)=|B_r|_g$, then by the coarea formula, we have
\begin{equation}\label{Eq-Volume}
	g(r)=|B_r|_g=\int_0^r{P(B_s)}\,ds=\omega_{n-1}\int_{0}^r{\sinh^{n-1}s}\,ds,
\end{equation}
hence $g'(r)=\omega_{n-1}\sinh^{n-1}r$. If we view $|B_r|_g$ as a new variable $z$, since $g'>0$, then the function $g$ has an inverse $g^{-1}$ with $r=g^{-1}(z)$. Moreover, 
\begin{equation*}
	\frac{dg^{-1}(z)}{dz}=\frac{1}{\omega_{n-1}\sinh^{n-1}(g^{-1}(z))}
\end{equation*}
and 
\begin{equation*}
	\xi(z)=P(B_{g^{-1}(z)})=\omega_{n-1}\sinh^{n-1}(g^{-1}(z)).
\end{equation*}
Combining the above two equations gives
\begin{align}
	\xi'(z)&=(n-1)\omega_{n-1}\sinh^{n-2}(g^{-1}(z))\cosh(g^{-1}(z))\frac{dg^{-1}(z)}{dz}=(n-1)\coth(g^{-1}(z)),\label{Eq-xi0}\\
	\xi''(z)&=-\frac{n-1}{\sinh^2(g^{-1}(z))}\frac{dg^{-1}(z)}{dz}=-\frac{n-1}{\omega_{n-1}\sinh^{n+1}(g^{-1}(z))}<0,\label{Eq-xi1}
\end{align}
from which we have the folllowing proposition:
\begin{prop}\label{Prop-concave}
	The isoperimetric function $\xi$ defined in \eqref{Defn-xi} is an increasing, strictly concave function on $[0,+\infty)$.
\end{prop}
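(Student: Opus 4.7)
The proof is essentially immediate from the computations of $\xi'(z)$ and $\xi''(z)$ that have just been derived in equations \eqref{Eq-xi0} and \eqref{Eq-xi1}, so the plan is mainly to organize what has already been established and handle the boundary behavior at $z = 0$ with care.

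First, I would restrict attention to $z > 0$. Since $g(r) = |B_r|_g$ is strictly increasing with $g(0) = 0$ and $g(r) \to +\infty$ as $r \to +\infty$, the inverse $g^{-1} \colon (0,+\infty) \to (0,+\infty)$ is a well-defined, smooth, strictly increasing bijection. From \eqref{Eq-xi0},
\begin{equation*}
    \xi'(z) = (n-1)\coth(g^{-1}(z)) > 0 \qquad \text{for all } z > 0,
\end{equation*}
which shows $\xi$ is strictly increasing on $(0,+\infty)$. Continuity of $\xi$ at $0$ follows from $\xi(z) = \omega_{n-1}\sinh^{n-1}(g^{-1}(z)) \to 0$ as $z \to 0^+$, together with $\xi(0) = P(B_0) = 0$; hence $\xi$ is strictly increasing on the closed interval $[0,+\infty)$.

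Next, from \eqref{Eq-xi1},
\begin{equation*}
    \xi''(z) = -\frac{n-1}{\omega_{n-1}\sinh^{n+1}(g^{-1}(z))} < 0 \qquad \text{for all } z > 0,
\end{equation*}
so $\xi'$ is strictly decreasing on $(0,+\infty)$, which gives strict concavity of $\xi$ on $(0,+\infty)$. To upgrade this to strict concavity on $[0,+\infty)$, I would note that $\xi'(z) = (n-1)\coth(g^{-1}(z)) \to +\infty$ as $z \to 0^+$, so for any $0 \leq z_1 < z_2$ and any $t \in (0,1)$, the strict concavity inequality
\begin{equation*}
    \xi(tz_1 + (1-t)z_2) > t\,\xi(z_1) + (1-t)\,\xi(z_2)
\end{equation*}
follows by passing to the limit from the strict inequality already known on $(0,+\infty)$, using the continuity of $\xi$ at $0$.

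There is no real obstacle here; the only point requiring mild care is the endpoint $z = 0$, where $\xi$ is not $C^2$ (indeed $\xi'(0^+) = +\infty$), but this behavior is favorable for concavity rather than an obstruction. Note that the subsequent application in Lemma \ref{Lem-concave} nominally requires $\phi \in C^2$ on $[0,+\infty)$; in practice one applies Lemma \ref{Lem-concave} to $\xi$ on $(0,+\infty)$ and extends by continuity at $0$, or reproves the two-point inequality directly from the strict concavity established here.
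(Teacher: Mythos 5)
Your argument is correct and follows the same route as the paper, which derives the proposition directly from the sign of $\xi'$ and $\xi''$ computed in \eqref{Eq-xi0} and \eqref{Eq-xi1}. Your additional care at the endpoint $z=0$ (where $\xi'(0^+)=+\infty$ and $\xi$ fails to be $C^2$) is a valid refinement of the same argument rather than a different approach, and your remark about how Lemma \ref{Lem-concave} is subsequently applied to $\xi$ is a fair observation.
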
 
\begin{rem}\label{Rem-Iso}
	For any measurable set $F\subset\mathbb{H}^n$, we have the isoperimetric inequality: $P(F)\geq\xi(|F|_g)$, and equality holds if and only if $F$ is a geodesic ball.
	 \end{rem}

In \cite[Proposition 12.37; Remark 12.38]{Maggi-12}, Maggi proved the following relative isoperimetric inequality in $\mathbb{R}^n$ on an Euclidean ball:
\begin{prop}[Local perimeter bound on volume]\label{Prop-RE}
	If $n\geq 2$, $t\in(0,1)$, $x\in\mathbb{R}^n$ and $r>0$, then there exists a positive constant $c(n,t)$ such that 
	\begin{equation}\label{In-relative1}
		P(F;B_r(x))\geq c(n,t)|F\cap B_r(x)|^{\frac{n-1}{n}},
	\end{equation}
	for every set of locally finite perimeter $F$ such that $|F\cap B_r(x)|\leq t|B_r(x)|$. Moreover, there exists a constant $c(n)$ such that 
	\begin{equation}\label{In-relative2}
			P(F;B_r(x))\geq c(n)\min\{|F\cap B_r(x)|, |B_r(x)\setminus F|\}^{\frac{n-1}{n}}.
	\end{equation}
\end{prop}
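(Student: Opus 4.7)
The plan is to first reduce the two statements to the scale-invariant case $r=1$, $x=0$, then establish the stronger inequality \eqref{In-relative2} directly, and finally deduce \eqref{In-relative1} from it. The whole argument can be arranged so that the $r$-dependence never enters explicitly: both sides of \eqref{In-relative2} scale like $r^{n-1}$ under the dilation $y \mapsto ry$, and the minimum on the right-hand side is preserved, so a standard rescaling $E := r^{-1}(F - x) - B_1$ reduces us to proving
\begin{equation*}
P(E; B_1) \;\geq\; c(n)\min\{|E\cap B_1|,\,|B_1\setminus E|\}^{(n-1)/n}
\end{equation*}
for every set $E\subset\mathbb{R}^n$ of locally finite perimeter.

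To prove this reduced inequality, I would argue by contradiction together with $BV$-compactness. Suppose a sequence $E_k$ violates the inequality with ever-smaller constants; normalizing by volume, we can assume $\min\{|E_k\cap B_1|,|B_1\setminus E_k|\}=1/2$ (say) while $P(E_k;B_1)\to 0$. By the standard $BV$-compactness theorem on the Lipschitz domain $B_1$, a subsequence of $\chi_{E_k\cap B_1}$ converges in $L^1(B_1)$ to some $\chi_{E_\infty}$; lower semicontinuity of the relative perimeter gives $P(E_\infty;B_1)=0$, so $E_\infty\cap B_1$ is (up to null sets) either empty or all of $B_1$. Either case contradicts $|E_\infty\cap B_1|=|B_1\setminus E_\infty|=|B_1|/2$, completing the argument for \eqref{In-relative2}. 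A more constructive alternative is to use a bounded extension operator $\mathrm{Ext}\colon BV(B_1)\to BV(\mathbb{R}^n)$ (which exists because $B_1$ has Lipschitz boundary) and then apply the global $BV$-Sobolev inequality of Gagliardo–Nirenberg together with a truncation at the median of $\chi_E$ to get the constant $c(n)$ explicitly.

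Given \eqref{In-relative2}, the local perimeter bound \eqref{In-relative1} follows with only elementary bookkeeping. Set $v(r) := |B_r(x)|$ and write $a := |F\cap B_r(x)|$, $b := |B_r(x)\setminus F|$, so $a+b=v(r)$. Under the hypothesis $a\leq t\, v(r)$, two cases appear: if $a\leq v(r)/2$ then the minimum in \eqref{In-relative2} equals $a$ and we are done with $c(n,t)=c(n)$; if instead $v(r)/2<a\leq tv(r)$ then the minimum equals $b\geq (1-t)v(r)\geq (1-t)a$, hence $b^{(n-1)/n}\geq (1-t)^{(n-1)/n}a^{(n-1)/n}$, and \eqref{In-relative1} follows with $c(n,t)=c(n)\min\{1,(1-t)^{(n-1)/n}\}>0$, which blows up as $t\to 1^-$ but is positive for each fixed $t\in(0,1)$.

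The main obstacle, to my eye, is not the algebra of the case split or the scaling, but rather producing the dimensional constant $c(n)$ in \eqref{In-relative2} in a self-contained way, since the extension operator / global isoperimetric inequality on $\mathbb{R}^n$ is being used as a black box. If one is willing to cite the global isoperimetric inequality (as is done throughout the paper via Remark \ref{Rem-Iso} in the hyperbolic setting), then the compactness approach above is clean and avoids producing $c(n)$ explicitly; if an explicit constant is desired, one must instead carry out the Gagliardo–Nirenberg truncation argument carefully, which is the technical heart of \cite[Proposition 12.37]{Maggi-12}.
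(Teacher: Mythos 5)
The paper does not actually prove this proposition---it is quoted verbatim from Maggi's book \cite[Proposition 12.37; Remark 12.38]{Maggi-12}---so there is no in-paper argument to compare with, and your proposal has to stand on its own. Your scaling reduction to $r=1$ is correct (both sides of \eqref{In-relative2} are homogeneous of degree $n-1$ under dilation), and the deduction of \eqref{In-relative1} from \eqref{In-relative2} via the two-case split is complete and correct, apart from the slip that the constant $c(n)(1-t)^{(n-1)/n}$ degenerates to $0$ (rather than ``blows up'') as $t\to1^-$.

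The genuine gap is in your primary argument for \eqref{In-relative2} on $B_1$. The contradiction scheme hinges on the normalization $\min\{|E_k\cap B_1|,|B_1\setminus E_k|\}=1/2$, but no operation on subsets of the fixed ball achieves this while controlling the ratio $P(E_k;B_1)/\min\{\cdot\}^{(n-1)/n}$; dilations leave $B_1$. Without it, a violating sequence may have $\min\{|E_k\cap B_1|,|B_1\setminus E_k|\}\to0$, the $L^1$-limit is $\emptyset$ or $B_1$, and no contradiction arises, since both sides of the inequality vanish in the limit. This is not a removable technicality: the embedding $BV(B_1)\hookrightarrow L^{n/(n-1)}(B_1)$ is bounded but not compact, which is precisely why the sharp-exponent relative isoperimetric inequality cannot be obtained by soft compactness alone (compactness does prove the exponent-$1$ Poincar\'e inequality). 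Your fallback via a $BV$ extension operator plus the global Gagliardo--Nirenberg inequality is the right route, but as sketched it only yields $\min\{\cdot\}^{(n-1)/n}\leq C(n)\left(\min\{\cdot\}+P(E;B_1)\right)$, because the extension operator costs the $L^1$-norm of the truncated function; this closes only when $\min\{\cdot\}$ is below a dimensional threshold, and for $\min\{\cdot\}\geq\delta_0(n)$ you must additionally invoke the (compactness-provable) $L^1$-Poincar\'e inequality to force $P(E;B_1)\geq c\,\delta_0$. Alternatively, and more in the spirit of this paper, you could run the argument the authors use for the hyperbolic analogue in Proposition \ref{Prop-REH}: apply the global isoperimetric inequality to $F\cap B_1$ and $B_1\setminus F$, add the two, cancel the boundary terms on $\partial B_1$, and use strict concavity together with the $\tfrac{n-1}{n}$-homogeneity of $z\mapsto nb_n^{1/n}z^{(n-1)/n}$; this gives \eqref{In-relative2} with an explicit constant and no compactness at all.
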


The two constants appeared in Proposition \ref{Prop-RE} do not depend on $r$, since the classical isoperimetric inequalitiy for sets in $\mathbb{R}^n$ has a nice explicit expression. However, the isoperimetric function $\xi$ defined by \eqref{Defn-xi} in $\mathbb{H}^n$ can not be expressed explicitly in general, hence we can only prove an analogue result in $\mathbb{H}^n$ with constant depending on the radius $r$ of the geodesic ball.

\begin{prop}\label{Prop-REH}
	If $n\geq 2$, $t\in(0,1)$, $x\in\mathbb{H}^n$ and $0<r\leq r_0$ for some $r_0>0$. Then there exists a positive constant $c(n,t,r_0)$ such that 
	\begin{equation}\label{Prop-REH1}
		P(F;B_r(x))\geq c(n,t,r_0)\xi(|F\cap B_r(x)|_g),
	\end{equation}
for every set of locally finite perimeter $F$ such that $|F\cap B_r(x)|_g\leq t|B_r(x)|_g$. Moreover, there exists a constant $c(n,r_0)$ such that 
\begin{equation}\label{Prop-REH2}
	P(F;B_r(x))\geq c(n,r_0)\xi(\min\{|F\cap B_r(x)|_g, |B_r(x)\setminus F|_g\}).
\end{equation}
\end{prop}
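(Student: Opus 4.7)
The plan is to transfer Proposition~\ref{Prop-RE} from $\mathbb{R}^n$ to $\mathbb{H}^n$ via a bi-Lipschitz comparison in geodesic normal coordinates, and then convert the Euclidean power $z^{(n-1)/n}$ into $\xi(z)$ using the behavior of $\xi$ on a bounded range of volumes.

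First, I would pass to geodesic normal coordinates centered at $x$, in which $g = d\rho^2 + \sinh^2 \rho \, g_{S^{n-1}}$ with $\rho$ denoting the geodesic distance from $x$. Since $\rho \mapsto \sinh\rho/\rho$ is continuous on $[0, r_0]$ with values in a bounded positive interval depending only on $r_0$, the identity between the Euclidean ball of radius $r$ in $T_x\mathbb{H}^n \cong \mathbb{R}^n$ and the geodesic ball $B_r(x) \subset \mathbb{H}^n$ is bi-Lipschitz with constants depending only on $n$ and $r_0$. This gives, for every set of locally finite perimeter $F$ and every $0 < r \leq r_0$, two-sided comparisons
\begin{equation*}
c_1 |F \cap B_r^{\mathrm{Euc}}|_{\mathrm{Euc}} \leq |F \cap B_r(x)|_g \leq c_2 |F \cap B_r^{\mathrm{Euc}}|_{\mathrm{Euc}}, \qquad c_3 P_{\mathrm{Euc}}(F; B_r^{\mathrm{Euc}}) \leq P(F; B_r(x)) \leq c_4 P_{\mathrm{Euc}}(F; B_r^{\mathrm{Euc}}),
\end{equation*}
where $B_r^{\mathrm{Euc}}$ denotes the Euclidean ball of radius $r$ in the chart and $c_i = c_i(n, r_0) > 0$.

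Given the hypothesis $|F \cap B_r(x)|_g \leq t |B_r(x)|_g$, these comparisons yield $|F \cap B_r^{\mathrm{Euc}}|_{\mathrm{Euc}} \leq t' |B_r^{\mathrm{Euc}}|_{\mathrm{Euc}}$ for some $t' = t'(n, t, r_0) < 1$. Applying Proposition~\ref{Prop-RE} in the chart and transferring back produces
\begin{equation*}
P(F; B_r(x)) \geq c(n, t, r_0) |F \cap B_r(x)|_g^{(n-1)/n}.
\end{equation*}
To upgrade this to \eqref{Prop-REH1}, I would note that $|F \cap B_r(x)|_g \leq |B_{r_0}|_g =: M(n, r_0)$, and from the expansion $g(r) = b_n r^n + O(r^{n+2})$ near $r = 0$ together with the formula $\xi(z) = \omega_{n-1} \sinh^{n-1}(g^{-1}(z))$, the ratio $\xi(z)/z^{(n-1)/n}$ extends continuously to $[0, M]$ with the finite positive limit $n b_n^{1/n}$ at $z = 0$. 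It is therefore bounded above on $[0, M]$ by some $C(n, r_0)$, so $z^{(n-1)/n} \geq C(n, r_0)^{-1} \xi(z)$ on this interval, and \eqref{Prop-REH1} follows.

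For \eqref{Prop-REH2}, I would apply \eqref{Prop-REH1} with $t = 1/2$ to whichever of $F$ and $\mathbb{H}^n \setminus F$ has the smaller intersection with $B_r(x)$, using $P(F; B_r(x)) = P(\mathbb{H}^n \setminus F; B_r(x))$. The main technical obstacle I anticipate is justifying the bi-Lipschitz comparison of relative perimeters for arbitrary sets of locally finite perimeter (rather than smooth sets); this is a standard consequence of the behavior of the perimeter functional under bi-Lipschitz maps, via approximation or via the Lipschitz pushforward of the associated Radon measures.
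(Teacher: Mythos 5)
Your proposal is correct in outline but follows a genuinely different route from the paper. The paper argues intrinsically: it decomposes $P(F\cap B_s)$ and $P(B_s\setminus F)$ into the part on $\partial B_s$ and the relative perimeter inside (via \cite[Lemma 15.12]{Maggi-12}), applies the hyperbolic isoperimetric inequality to both pieces to obtain $\xi(h(r))+\xi(|B_r|_g-h(r))-\xi(|B_r|_g)\leq 2P(F;B_r(x))$, and then bounds the ratio $\Psi(\beta,r)$ away from zero on $[0,t]\times[0,r_0]$ by a continuity/compactness analysis of its limits at the corners. You instead transfer the problem to $\mathbb{R}^n$ through geodesic normal coordinates, invoke the Euclidean relative isoperimetric inequality of Proposition \ref{Prop-RE} as a black box, and convert $z^{\frac{n-1}{n}}$ back to $\xi(z)$ using the two-sided comparison of Lemma \ref{Lem-Eulik} on the compact volume range $[0,|B_{r_0}|_g]$. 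Your approach is shorter and delivers the Euclidean-type bounds of Remark \ref{Rem-IsoE} directly (the paper obtains the $\xi$-form first and the power form as a corollary); the paper's approach avoids any chart and any discussion of how perimeter and reduced boundaries behave under bi-Lipschitz changes of metric. The deduction of \eqref{Prop-REH2} from \eqref{Prop-REH1} with $t=\tfrac12$ is identical in both.

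One step needs more care than you give it: the claim that the hypothesis transfers to $|F\cap B_r^{\mathrm{Euc}}|_{\mathrm{Euc}}\leq t'|B_r^{\mathrm{Euc}}|_{\mathrm{Euc}}$ with $t'<1$. The direct estimate (bounding the numerator above and the denominator below by the volume comparison) only yields $t'=t\left(\frac{\sinh r_0}{r_0}\right)^{n-1}$, which can exceed $1$ when $t$ is close to $1$ and $r_0$ is not small, so Proposition \ref{Prop-RE} would not apply. The fix is to run the comparison on the complement: $|B_r^{\mathrm{Euc}}\setminus F|_{\mathrm{Euc}}\geq\left(\frac{r_0}{\sinh r_0}\right)^{n-1}(1-t)\,|B_r^{\mathrm{Euc}}|_{\mathrm{Euc}}$, whence $t'=1-\left(\frac{r_0}{\sinh r_0}\right)^{n-1}(1-t)<1$; alternatively, apply \eqref{In-relative2} instead of \eqref{In-relative1} and note that $\min\{|F\cap B_r^{\mathrm{Euc}}|,|B_r^{\mathrm{Euc}}\setminus F|\}$ is bounded below by a constant multiple of $|F\cap B_r^{\mathrm{Euc}}|$. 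With either repair, and with the standard bi-Lipschitz behavior of relative perimeter that you correctly anticipate as the remaining technical point, the argument goes through.
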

\begin{proof}
	Since this proof is independent of the choice of $x$, hence we omit it in the following. By \cite[Lemma 15.12]{Maggi-12}, we have for a.e. $s>0$, there holds
	\begin{align*}
		P(F\cap B_s)&=H^{n-1}(F\cap\partial B_s)+P(F;B_s),\\
		P(B_s\setminus F)&=H^{n-1}(\partial B_s\setminus F)+P(F;B_s).
	\end{align*}
	If we denote $p(s):=P(F;B_s)$ and $h(s)=|F\cap B_s|_g$ for simplicity, which are both continuous functions in $s$, by Remark \ref{Rem-Iso} we have
	\begin{align}
		\xi(h(s))\leq P(F\cap B_s)&=H^{n-1}(F\cap\partial B_s)+p(s),\label{eq-Reh1}\\
		\xi(|B_s|_g-h(s))\leq P(B_s\setminus F)&=H^{n-1}(\partial B_s\setminus F)+p(s).\label{eq-Reh2}
	\end{align}
	If we add \eqref{eq-Reh1} and \eqref{eq-Reh2} together, and notice that $H^{n-1}(F\cap\partial B_s)+H^{n-1}(\partial B_s\setminus F)=P(B_s)=\xi(|B_s|_g)$, then we have that
	\begin{equation}\label{eq-Reh3}
			\xi(h(s))+\xi(|B_s|_g-h(s))\leq 2p(s)+\xi(|B_s|_g)
	\end{equation}
holds for a.e. $s>0$. Since both sides of the inequality \eqref{eq-Reh3} are continous in $s$, so \eqref{eq-Reh3} actually holds for all $s>0$, and particularly it holds for $s=r$. If we set $h(r)=\beta|B_r|_g$, and hence $|B_r|_g-h(r)=(1-\beta)|B_r|_g$, then \eqref{eq-Reh3} now becomes (taking $s=r$)
\begin{equation}\label{eq-Reh4}
	2p(r)\geq \xi(\beta|B_r|_g)+\xi((1-\beta)|B_r|_g)-\xi(|B_r|_g),
\end{equation}
where $0<\beta\leq t<1$ and $0<r\leq r_0$. Define the function 
\begin{equation}\label{Defn-Psi}
	\Psi(\beta,r)=\frac{\xi(\beta|B_r|_g)+\xi((1-\beta)|B_r|_g)-\xi(|B_r|_g)}{\xi(\beta|B_r|_g)}=1-\frac{\xi(|B_r|_g)-\xi((1-\beta)|B_r|_g)}{\xi(\beta|B_r|_g)}.
\end{equation}
By Lemma \ref{Lem-concave} and Proposition \ref{Prop-concave}, we have $\Psi(\beta,r)>0$. We next show that 
\begin{itemize}
	\item[(i)] For any $r_{*}>0$, $\lim_{(\beta,r)\to(0,r_{*}),\beta>0,r>0}\Psi(\beta,r)=1$;
	\item[(ii)] For any $\beta_{*}>0$, $\lim_{(\beta,r)\to(\beta_{*},0),\beta>0,r>0}\Psi(\beta,r)=1-\beta_{*}^{-\frac{n-1}{n}}[1-(1-\beta_{*})^{\frac{n-1}{n}}]$, and if we denote the right hand side of the limit as $k(\beta_{*})$, then $\lim_{\beta_{*}\to 0}k(\beta_{*})=1$ and $k(\beta_{*})\geq 1-t^{-\frac{n-1}{n}}[1-(1-t)^{\frac{n-1}{n}}]>0$ for $\beta_{*}\in(0,t]$.
	\item[(iii)] $\lim_{(\beta,r)\to(0,0),\beta>0,r>0}\Psi(\beta,r)=1$.
\end{itemize}
If we can prove the above three properties, then the function $\overline{\Psi}(\beta,r)$ defined by 
\begin{equation}\label{p}
	\overline{\Psi}(\beta,r)=\left\{\begin{aligned}
		&\Psi(\beta,r),\beta>0,r>0,\\
		&k(\beta), \beta>0,r=0,\\
		&1,\beta=0,r\geq 0
	\end{aligned}\right.
\end{equation}
is continous for $0\leq\beta\leq t$ and $0\leq r\leq r_0$, which in turn yields the existence of a constant $c_0(n,t,r_0)$, such that $\overline{\Psi}(\beta,r)\geq c_0(n,t,r_0)$ and hence \eqref{Prop-REH1} holds with $c(n,t,r_0)=\frac{1}{2}c_0(n,t,r_0)$.

To prove (i), for any sequence $(\beta_i,r_i)\to(0,r_{*})$ as $i\to\infty$, we have $\beta_i|B_{r_i}|_g\to 0$ as $i\to\infty$, assume that $s_i$ satisfies $|B_{s_i}|_g=\beta_i|B_{r_i}|_g$, then $s_i\to 0$ and $|B_{s_i}|_g\sim b_n s_i^n$ and $P(B_{s_i})\sim nb_ns_i^{n-1}$. Moreover, $s_i\sim\left(\frac{\beta_i}{b_n}|B_{r_i}|_g\right)^{\frac{1}{n}}$ and hence 
\begin{equation}\label{Es-Reh5}
	\xi(\beta_i|B_{r_i}|_g)=P(B_{s_i})\sim nb_ns_i^{n-1}\sim nb_n^{\frac{1}{n}}\left(\beta_i|B_{r_i}|_g\right)^{\frac{n-1}{n}}.
\end{equation}
On the other hand, by the Taylor expansion, there holds
\begin{align}
	\xi((1-\beta_i)|B_{r_i}|_g)&=\xi(|B_{r_i}|_g)-\beta_i|B_{r_i}|_g\xi'(|B_{r_i}|_g)+O(\beta_i^2|B_{r_i}|_g^2)\notag\\
	&=\xi(|B_{r_i}|_g)-(n-1)\beta_i|B_{r_i}|_g\coth{r_i}+O(\beta_i^2|B_{r_i}|_g^2).\label{Es-Reh5.5}
\end{align}
Combining \eqref{Defn-Psi} with \eqref{Es-Reh5} and \eqref{Es-Reh5.5} gives $\lim_{i\to\infty}\Psi(\beta_i,r_i)=1$.

To prove (ii), for any sequence $(\beta_i,r_i)\to(\beta_{*},0)$, we see that $|B_{r_i}|_g\sim b_n r_i^n$ and $P(B_{r_i})\sim nb_nr_i^{n-1}$. Assume that $\beta_i|B_{r_i}|_g=|B_{s_i^1}|_g$ and $(1-\beta_i)|B_{r_i}|_g=|B_{s_i^2}|_g$, then as $i\to\infty$, we have $s_i^1\sim \beta_i^{\frac{1}{n}}r_i$ and $s_i^2\sim (1-\beta_i)^{\frac{1}{n}}r_i$. Hence 
\begin{align}
	\xi(\beta_i|B_{r_i}|_g)=P(B_{s_i^1})&\sim nb_n (s_i^1)^{n-1}\sim nb_n\beta_i^{\frac{n-1}{n}}r_i^{n-1},\label{Es-Reh6}\\
	\xi((1-\beta_i)|B_{r_i}|_g)=P(B_{s_i^2})&\sim nb_n (s_i^2)^{n-1}\sim nb_n(1-\beta_i)^{\frac{n-1}{n}}r_i^{n-1}.\label{Es-Reh7}
\end{align}
Combining \eqref{Defn-Psi} with \eqref{Es-Reh6} and \eqref{Es-Reh7} gives 
\begin{align}
	\lim_{i\to\infty}\Psi(\beta_i,r_i)&=1-\lim_{i\to\infty}\beta_{i}^{-\frac{n-1}{n}}[1-(1-\beta_{i})^{\frac{n-1}{n}}]\label{Eq-Reh8}\\
	&=1-\beta_{*}^{-\frac{n-1}{n}}[1-(1-\beta_{*})^{\frac{n-1}{n}}].\label{Eq-Reh9}
\end{align}
Using L'Hospital's rule and after a direct calculation, we see that 
\begin{align}
	\lim_{\beta_{*}\to 0}k(\beta_{*})&=1-\lim_{\beta_{*}\to 0}\left(\frac{\beta_{*}}{1-\beta_{*}}\right)^{\frac{1}{n}}=1,\label{Es-Reh8}\\
	k'(\beta_{*})&=\frac{n-1}{n}\beta_{*}^{-2+\frac{1}{n}}[1-(1-\beta_{*})^{-\frac{1}{n}}]<0.\label{Es-Reh9}
\end{align}
Hence $k(\beta_{*})$ is a decreasing function in $\beta_{*}$, which implies that $k(\beta_{*})\geq k(t)>k(1)=0$. This completes the proof of (ii).

The proof of (iii) follows directly from \eqref{Eq-Reh8} and \eqref{Es-Reh8}.

Finally, to prove \eqref{Prop-REH2}, we take $t=\frac{1}{2}$ and notice that for any measurable set $F\subset\mathbb{H}^n$, either $|F\cap B_r(x)|_g\leq\frac{1}{2}|B_r(x)|_g$ or $|F^c\cap B_r(x)|_g\leq\frac{1}{2}|B_r(x)|_g$. Then applying \eqref{Prop-REH1} to $F$ or $F^c$ with $t=\frac{1}{2}$ gives  \eqref{Prop-REH2}.
\end{proof}

Next we introduce a quantitative isoperimetric inequality proved by Bögelein, Duzaar and Scheven \cite{Bogelein15}. Given a measurable set $F\subset\mathbb{H}^n$ and $B_r$ a geodesic ball with the same volume of $F$, i.e., $|F|_g=|B_r|_g$. We define the isoperimetric gap $\mathbf{D}(F)$ to be $\mathbf{D}(F):=P(F)-P(B_r)$, which is nonnegative by the isoperimetric inequality. Also, we define the
hyperbolic Fraenkel asymmetry index $\bm{\upalpha}(F)$ as follows:
\begin{equation}\label{Defn-asym}
	\bm{\upalpha}(F)=\min_{x\in\mathbb{H}^n}|F\Delta B_r(x)|_g.
\end{equation}
\begin{thm}\label{Thm-quan}
	For any $r_0>0$, there exists a constant $c'(n,r_0)$ such that for any set $F\subset\mathbb{H}^n$ of finite perimeter with volume $|F|_g=|B_r|_g$ for some $r\in(0,r_0]$. Then the following inequality holds:
	\begin{equation}\label{In-quan}
	\mathbf{D}(F):=P(F)-P(B_r)\geq \frac{c'(n,r_0)}{\sinh^{n+1} r}\bm{\upalpha}^2(F).
	\end{equation}
\end{thm}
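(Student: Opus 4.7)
The plan is to adapt the selection-principle proof of Cicalese--Leonardi from the Euclidean setting to $\mathbb{H}^n$, reducing the statement to a Fuglede-type inequality for nearly spherical sets. Argue by contradiction: if no such $c'(n,r_0)$ exists, there exist sets $F_k$ with $|F_k|_g=|B_{r_k}|_g$, $r_k\in(0,r_0]$, satisfying $P(F_k)-P(B_{r_k})<\tfrac{1}{k\sinh^{n+1} r_k}\bm{\upalpha}^2(F_k)$. After extracting a subsequence, $r_k\to r_*\in[0,r_0]$. In the regime $r_*=0$, the $\Phi_\lambda$-transformation of \S\ref{Subsec-model} together with the fact that $\sinh r_k\sim r_k$ reduces the assumed violation to a violation of the classical Euclidean quantitative isoperimetric inequality of Fusco--Maggi--Pratelli, which is impossible. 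Hence we may assume $r_*>0$.

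In this regime, apply the selection principle: replace each $F_k$ by a minimizer $G_k$ of a penalized functional of the form
\begin{equation*}
\mathcal{F}_k(F):=P(F)+\Lambda\bigl||F|_g-|B_{r_k}|_g\bigr|+\eta_k\bigl|\bm{\upalpha}(F)-\bm{\upalpha}(F_k)\bigr|,
\end{equation*}
with $\Lambda$ large and $\eta_k$ chosen so that $G_k$ still approximately violates the desired inequality. Since $G_k$ is an $\omega$-minimizer of the perimeter (the theory being local and the hyperbolic metric being smooth), standard $C^{1,\beta}$ regularity and Hausdorff-convergence arguments show that, up to a translation by an appropriate $p_k\in\mathbb{H}^n$, the boundary $\partial G_k$ is a radial graph over $\partial B_{r_k}(p_k)$,
\begin{equation*}
\partial G_k=\bigl\{\exp_{p_k}((r_k+u_k(v))v):v\in\mathbb{S}^{n-1}\bigr\},\qquad \|u_k\|_{C^1(\mathbb{S}^{n-1})}\to 0.
\end{equation*}

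Finally, derive a Fuglede-type inequality on $\mathbb{H}^n$ for such nearly spherical sets. In hyperbolic polar coordinates about $p_k$,
\begin{equation*}
P(G_k)=\int_{\mathbb{S}^{n-1}}\sinh^{n-1}(r_k+u_k)\sqrt{1+\frac{|\nabla u_k|^2}{\sinh^2(r_k+u_k)}}\,d\sigma_{\mathbb{S}^{n-1}},
\end{equation*}
and the volume constraint $|G_k|_g=|B_{r_k}|_g$ reads $\int_{\mathbb{S}^{n-1}}\bigl(\sinh^n(r_k+u_k)-\sinh^n r_k\bigr)\,d\sigma=0$. Linearizing at $u_k=0$ and decomposing $u_k$ in spherical harmonics, the zero-mean condition kills the zeroth mode and, after centering, the first mode is controlled by $\|u_k\|_{C^1}\to 0$; the spectral gap on $\mathbb{S}^{n-1}$ then produces a lower bound $P(G_k)-P(B_{r_k})\geq c(n)\sinh^{n-3}r_k\,\|u_k\|_{H^{1/2}(\mathbb{S}^{n-1})}^2$, while a direct computation gives $\bm{\upalpha}(G_k)^2\leq C(n)\sinh^{2n-2}r_k\,\|u_k\|_{L^2(\mathbb{S}^{n-1})}^2$. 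Combining the two estimates yields a lower bound with $r_k$-dependence $\sinh^{(n-3)-(2n-2)}r_k=\sinh^{-(n+1)}r_k$, contradicting the choice of $F_k$. The main obstacle is bookkeeping the exponents of $\sinh r_k$ across the Taylor expansion and the spherical-harmonic estimates so that the Fuglede bound is genuinely uniform on the full range $r_k\in(0,r_0]$, rather than merely asymptotic as $r_k\to 0$ or $r_k\to r_*$; this uniform control is what justifies the single constant $c'(n,r_0)$.
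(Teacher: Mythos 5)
Your strategy is sound, but note that the paper does not prove this theorem from scratch: it is exactly \cite[Theorem 1.1]{Bogelein15} of B\"ogelein--Duzaar--Scheven combined with their Equation (1.6) relating the $L^2$-oscillation index to the Fraenkel asymmetry, and the paper's ``proof'' is a one-line citation. What you have written is essentially a reconstruction of how that reference establishes the result --- selection principle, regularity of penalized minimizers, reduction to a hyperbolic Fuglede estimate for radial graphs --- so in substance you are re-deriving the cited proof rather than taking a different route. Your exponent bookkeeping at the end is consistent: with the additive parametrization $r_k+u_k$ one has $\bm{\upalpha}(G_k)\leq C\sinh^{n-1}(r_k)\|u_k\|_{L^2}$, and converting the multiplicative Fuglede bound of \cite[Theorem 4.1]{Bogelein15} (quoted in Remark \ref{Rem-Fuglede}) gives $\mathbf{D}(G_k)\geq c\sinh^{n-1}(r_k)r_k^{-2}\|u_k\|_{L^2}^2\geq c\sinh^{n-3}(r_k)\|u_k\|_{L^2}^2$; these combine to the stated $\sinh^{-(n+1)}r$.

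Two concrete points need repair. First, the volume constraint is misstated: since $|B_r|_g=\omega_{n-1}\int_0^r\sinh^{n-1}s\,ds$, the constraint is $\int_{\mathbb{S}^{n-1}}\int_{r_k}^{r_k+u_k}\sinh^{n-1}s\,ds\,d\sigma=0$, not $\int_{\mathbb{S}^{n-1}}(\sinh^n(r_k+u_k)-\sinh^n r_k)\,d\sigma=0$; the leading term is still $\sinh^{n-1}(r_k)\int u_k\,d\sigma$, so the conclusion survives, but the formula as written is false. Second, and more seriously, $\Phi_\lambda$ cannot handle the regime $r_*=0$. To rescale a set of volume $|B_{r_k}|_g\to 0$ to unit volume one needs $\lambda_k\to 0$, and then Proposition \ref{Prop-Lip} only gives $d_g(x,y)\leq d_g(\Phi_{\lambda_k}(x),\Phi_{\lambda_k}(y))\leq\lambda_k^{-2}d_g(x,y)$, with constants that degenerate; $\Phi_\lambda$ is an anisotropic Euclidean dilation, not a conformal rescaling, it does not send geodesic balls to geodesic balls, and it does not converge to a Euclidean similarity. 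The correct reduction as $r_k\to 0$ goes through geodesic normal coordinates, where $g=\delta+O(r_k^2)$, or through a Fuglede estimate with constants uniform down to $r=0$ as in \cite{Bogelein15}. Finally, existence of minimizers for your penalized functional in the noncompact space $\mathbb{H}^n$ (minimizing sequences may escape to infinity, and the infimum defining $\bm{\upalpha}$ is over a noncompact family of centers) needs the truncation/concentration argument of the reference and should not be waved through.
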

\begin{proof}
	The proof of Theorem \ref{Thm-quan} follows directly from combining \cite[Theorem 1.1]{Bogelein15} with \cite[Equation (1.6)]{Bogelein15}.
\end{proof}
\begin{rem}
	Indeed, \cite[Theorem 1.1]{Bogelein15} gives the inequality between $\mathbf{D}(F)$ and the $L^2$-oscillation index $\bm{\beta}(F)$ of the set $F$. Since inequality \eqref{In-quan} is enough for our use, we omit the explicit definition of $\bm{\beta}(F)$ here and readers can refer to \cite[\S 2.6]{Bogelein15} for further details.
\end{rem}
In the end of this subsection, we give an Euclidean-like isoperimetric inequality for sets in $\mathbb{H}^n$, which is proved in \cite[Lemma 2.1]{Bogelein15}.
\begin{lem}\label{Lem-Eulik}
	Let $r_0>0$. For any radius $r\in(0,r_0]$, we have the estimates 
	\begin{equation}\label{In-Eulik}
		nb_n^{\frac{1}{n}}|B_r|_g^{\frac{n-1}{n}}\leq P(B_r)\leq n b_n^{\frac{1}{n}}\left[\cosh{r_0}|B_r|_g\right]^{\frac{n-1}{n}}.
	\end{equation}
\end{lem}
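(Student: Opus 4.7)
The plan is to reduce both bounds in \eqref{In-Eulik} to elementary inequalities involving $\sinh$ and $\cosh$, using the explicit formulas collected earlier in the paper: namely $P(B_r)=\omega_{n-1}\sinh^{n-1}r$ and $|B_r|_g=\omega_{n-1}\int_0^r \sinh^{n-1}s\,ds$, together with the identity $\omega_{n-1}=n b_n$. Substituting these expressions, the desired double inequality is equivalent to
\begin{equation*}
\sinh^n r \;\leq\; n\cosh^{n-1} r_0 \int_0^r \sinh^{n-1}s\,ds \quad\text{and}\quad n\int_0^r \sinh^{n-1}s\,ds \;\leq\; \sinh^n r,
\end{equation*}
so the whole lemma comes down to sandwiching $\int_0^r \sinh^{n-1}s\,ds$ between $\sinh^n r / (n\cosh^{n-1} r_0)$ and $\sinh^n r/n$.

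For the lower bound in \eqref{In-Eulik}, I would set $f(r)=\sinh^n r - n\int_0^r\sinh^{n-1}s\,ds$ and check $f(0)=0$ and $f'(r)=n\sinh^{n-1}r(\cosh r-1)\ge 0$, which gives $\int_0^r\sinh^{n-1}s\,ds\le \sinh^n r/n$ and hence, after raising to the power $(n-1)/n$ and multiplying by $n b_n^{1/n}$, the inequality $nb_n^{1/n}|B_r|_g^{(n-1)/n}\le P(B_r)$. For the upper bound, I would analogously set $h(r)=n\cosh^{n-1} r_0\int_0^r\sinh^{n-1}s\,ds-\sinh^n r$, note $h(0)=0$ and compute $h'(r)=n\sinh^{n-1}r(\cosh^{n-1}r_0-\cosh r)$, which is nonnegative on $[0,r_0]$ because $\cosh r\le \cosh r_0\le \cosh^{n-1} r_0$ (using $n\ge 2$ and $\cosh r_0\ge 1$); this yields $\sinh^n r\le n\cosh^{n-1}r_0\int_0^r\sinh^{n-1}s\,ds$, which is the desired bound after the same algebraic manipulation.

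The argument is essentially a pair of one-line calculus estimates, so there is no genuine obstacle; the only point worth flagging is that the constant $\cosh^{n-1} r_0$ in the upper bound is unavoidable in this form because $\sinh^{n-1}s$ grows while its Euclidean counterpart $s^{n-1}$ does not, and the factor $\cosh r_0$ is exactly what is needed to absorb the discrepancy $\cosh r$ arising from differentiating $\sinh^n$.
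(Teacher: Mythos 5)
Your overall strategy (substitute $P(B_r)=nb_n\sinh^{n-1}r$ and $|B_r|_g=nb_n\int_0^r\sinh^{n-1}s\,ds$, then prove two monotonicity inequalities) is sound, and your lower bound is correct. The paper itself gives no proof of this lemma — it simply cites \cite[Lemma 2.1]{Bogelein15} — so a self-contained calculus argument of this type is exactly what is called for.

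However, your reduction of the upper bound contains an algebra error that makes the proof, as written, establish a strictly weaker statement. Since the factor $\cosh r_0$ sits \emph{inside} the bracket that is raised to the power $\frac{n-1}{n}$, the inequality $P(B_r)\le nb_n^{1/n}\left[\cosh r_0\,|B_r|_g\right]^{(n-1)/n}$ is equivalent (after dividing by $nb_n$ and raising to the power $\frac{n}{n-1}$) to
\begin{equation*}
\sinh^n r\;\le\;n\cosh r_0\int_0^r\sinh^{n-1}s\,ds,
\end{equation*}
with $\cosh r_0$ to the \emph{first} power — not $\cosh^{n-1}r_0$ as you wrote. The inequality you actually prove, $\sinh^n r\le n\cosh^{n-1}r_0\int_0^r\sinh^{n-1}s\,ds$, only yields $P(B_r)\le nb_n^{1/n}\cosh^{(n-1)^2/n}(r_0)\,|B_r|_g^{(n-1)/n}$, which is weaker than \eqref{In-Eulik} for $n\ge 2$. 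The fix is immediate and in fact simplifies your argument: set $h(r)=n\cosh r_0\int_0^r\sinh^{n-1}s\,ds-\sinh^n r$, so that $h(0)=0$ and $h'(r)=n\sinh^{n-1}r\,(\cosh r_0-\cosh r)\ge 0$ on $[0,r_0]$, using only $\cosh r\le\cosh r_0$ and dispensing with the step $\cosh r_0\le\cosh^{n-1}r_0$. (Equivalently, $\int_0^r\sinh^{n-1}s\,ds\ge\frac{1}{\cosh r_0}\int_0^r\sinh^{n-1}s\cosh s\,ds=\frac{\sinh^n r}{n\cosh r_0}$.) With this correction the proof is complete.
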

\begin{rem}\label{Rem-IsoE}
	By Lemma \ref{Lem-Eulik}, the isoperimetric function $\xi$ defined in \eqref{Defn-xi} satsifies $\xi(z)\geq n b_n^{\frac{1}{n}}z^{\frac{n-1}{n}}$. Combined with Remark \ref{Rem-Iso}, there holds
	\begin{equation}\label{Eq-HEin}
		P(F)\geq \xi(|F|_g)=n b_n^{\frac{1}{n}}|F|_g^{\frac{n-1}{n}}
	\end{equation}
	for any measurable set $F\subset\mathbb{H}^n$. Also, under the same assumption of Proposition \ref{Prop-REH}, we can deduce from \eqref{Prop-REH1} and \eqref{Prop-REH2} respectively that 
	\begin{align}
		P(F;B_r(x))&\geq c_1(n,t,r_0)|F\cap B_r(x)|_g^{\frac{n-1}{n}},\label{Rem-Eulik1}\\
		P(F;B_r(x))&\geq c_2(n,r_0)\min\{|F\cap B_r(x)|_g, |B_r(x)\setminus F|_g\}^{\frac{n-1}{n}}\label{Rem-Eulik2},
	\end{align}
	where $c_1(n,t,r_0)=n b_n^{\frac{1}{n}}c(n,t,r_0)$ and $c_2(n,r_0)=n b_n^{\frac{1}{n}}c(n,r_0)$.
\end{rem}
\subsection{Quasiminimizer of the perimeter} In this subsection, we introduce two notions of quasiminimizer of the perimeter.
\begin{defn}
	A set $E\subset\mathbb{H}^n$ is said to be an $(\omega,r_0)$-minimizer for the perimeter, with $\omega>0$ and $r_0>0$, if for every geodesic ball $B_r(x)$ with $r\leq r_0$ and for every set of finite perimeter $F\subset\mathbb{H}^n$ such that $E\Delta F\subset\subset B_r(x)$, we have 
	\begin{equation}\label{Defn-Quasi}
		P(E)\leq P(F)+\omega |E\Delta F|_g.
	\end{equation}
\end{defn}
\begin{defn}\label{Defn-Quasi11}
	A set $E\subset\mathbb{H}^n$ of finite perimeter is said to be a perimeter $(K,r_1)$-quasiminimizer, with $r_1>0$ and $K\geq 1$, if for every geodesic ball $B_r(x)$ with $r\leq r_1$ and for every set of finite perimeter $F\subset\mathbb{H}^n$ such that $E\Delta F\subset\subset B_r(x)$, we have 
	\begin{equation}\label{Defn-Quasi1}
		P(E;B_r(x))\leq KP(F;B_r(x)).
	\end{equation}
\end{defn}
The following result was proved in \cite[Theorem 5.2]{Kin13} in the context of metric spaces, and the hyperbolic version was stated in \cite[Theorem 5.1]{Bogelein15}, hence we include it here for the convenience of readers.
\begin{thm}\label{Thm-quasim}
	Suppose that $E\subset\mathbb{H}^n$ is a perimeter $(K,r_1)$-quasiminimizer. Then, up to modifying $E$ in a set of measure zero, the topological boundary of $E$ coincides with the reduced boundary, i.e., $\partial E=\partial^* E$. Moreover, both $E$ and $\mathbb{H}^n\setminus E$ are locally porous in the sense that there exists a constant $C_0>1$, depending only on $n$ and $K$, such that for every $x\in\partial E$ and $0<r<r_1$, there are points $y,z\in B_r(x)$ for which
	\begin{equation*}
		B_{r/C_0}(y)\subset E\,\,\text{and}\,\, B_{r/C_0}(z)\subset \mathbb{H}^n\setminus E
	\end{equation*}
	holds true.
\end{thm}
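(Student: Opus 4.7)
Both conclusions will follow from uniform two-sided density estimates for $E$ at its essential-boundary points, obtained in the spirit of De Giorgi's classical regularity theory for perimeter quasi-minimizers, transplanted to $\mathbb{H}^n$. To produce these, I set $m(s):=|E\cap B_s(x)|_g$ and apply Definition \ref{Defn-Quasi11} with the competitor $F:=E\setminus\bar{B}_s(x)$ and ambient ball $B_{s'}(x)$, $s<s'<r_1$. Decomposing the relative perimeters on either side of $\partial B_s(x)$, discarding the $H^1$-null set of radii on which $H^{n-1}(\partial^*E\cap\partial B_s(x))>0$, and passing to the limit $s'\searrow s$ yields
\begin{equation*}
P(E;B_s(x))\le K\,H^{n-1}\!\left(E\cap\partial B_s(x)\right)=K\,m'(s)\qquad\text{for a.e. }s.
\end{equation*}
Adding $m'(s)$ to both sides, invoking the identity $P(E\cap B_s(x))=P(E;B_s(x))+H^{n-1}(E\cap\partial B_s(x))$ together with the Euclidean-like isoperimetric inequality \eqref{Eq-HEin} applied to $E\cap B_s(x)$, I obtain the ODI
\begin{equation*}
n\,b_n^{1/n}\,m(s)^{(n-1)/n}\le(K+1)\,m'(s),
\end{equation*}
whose integration produces $m(s)\ge c(n,K)\,s^n$ on $(0,r_1]$ whenever $x\in\bar E^M$. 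The symmetric competitor $F=E\cup B_s(x)$ yields the analogous lower bound for $|B_s(x)\setminus E|_g$ whenever $x\notin\mathring E^M$, and the same scheme (estimating $m'(s)$ trivially by $H^{n-1}(\partial B_s(x))$) gives the perimeter upper bound $P(E;B_r(x))\le K\,\omega_{n-1}\sinh^{n-1}r$ for all $r<r_1$.

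With the density estimates in hand I turn to the identification $\partial E=\partial^*E$. After replacing $E$ by the precise representative $\mathbb{H}^n\setminus\mathring{(E^c)}^M$, which differs from the original $E$ by a Lebesgue-null set, the two-sided density bounds force every point of $\mathring E^M$ to be topologically interior to the new $E$ and every point of $\mathring{(E^c)}^M$ to be topologically exterior, so by \eqref{Eq-Essenb} the new topological boundary equals $\partial^M E$. Federer's theorem gives $H^{n-1}(\partial^M E\setminus\partial^*E)=0$, and the pointwise density lower bounds at each $x\in\partial^M E$ then promote this to the set-theoretic equality $\partial E=\partial^*E$.

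For the porosity statement, fix $x\in\partial E$ and $r\in(0,r_1)$, and suppose for contradiction that every $y\in E\cap B_{r/2}(x)$ satisfies $d_g(y,\partial E)<r/C_0$ for a constant $C_0>1$ to be chosen. Then $E\cap B_{r/2}(x)\subset\bigcup_i B_{r/C_0}(w_i)$ for a collection with $w_i\in\partial E$; extracting a Vitali subfamily $\{B_{r/(5C_0)}(w_i)\}_{i=1}^N$ and pairing the perimeter lower bound at each $w_i$ (an immediate consequence of the density estimates) with the perimeter upper bound on $B_r(x)$ forces $N\le C_1(n,K)\,C_0^{n-1}$, so
\begin{equation*}
|E\cap B_{r/2}(x)|_g\le C_2(n,K)\,\frac{r^n}{C_0},
\end{equation*}
which contradicts the density lower bound of the first step once $C_0=C_0(n,K)$ is taken sufficiently large. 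This produces the desired $y\in B_r(x)$ with $B_{r/C_0}(y)\subset E$, and the existence of $z\in B_r(x)$ with $B_{r/C_0}(z)\subset\mathbb{H}^n\setminus E$ follows symmetrically.

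The main obstacle is the rigorous derivation of $P(E;B_s(x))\le K\,m'(s)$: Definition \ref{Defn-Quasi11} is formulated with the strict containment $E\Delta F\subset\subset B_r(x)$, so one must carry the auxiliary outer radius $s'>s$, track the perimeter contribution in the annulus $B_{s'}(x)\setminus\bar{B}_s(x)$, and discard an exceptional null set of radii with nontrivial trace $H^{n-1}(\partial^*E\cap\partial B_s(x))$ before passing to the limit $s'\searrow s$. Once this ODI is secured, the remainder is a routine transplantation of the Euclidean De Giorgi-type scheme, with the only genuinely hyperbolic input being the small-radius control $H^{n-1}(\partial B_r(x))=\omega_{n-1}\sinh^{n-1}r\le C\,r^{n-1}$.
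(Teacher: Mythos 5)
The paper does not actually prove this theorem: it is quoted verbatim from \cite{Bogelein15}*{Theorem 5.1}, whose proof is \cite{Kin13}*{Theorem 5.2}, so there is no in-paper argument to compare against. Your proposal essentially reconstructs that standard De Giorgi/KKLS scheme, and the core of it is sound: the derivation of $P(E;B_s(x))\le K\,m'(s)$ via the competitor $E\setminus \bar B_s(x)$ in a slightly larger ball, the resulting two-sided volume density bounds and the perimeter upper bound, and the Vitali-covering porosity argument are all correct (modulo routine care with the a.e.\ radii and the normalization of the representative).

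There is, however, one genuine gap, in the step where you pass from $H^{n-1}(\partial^M E\setminus\partial^*E)=0$ to the set-theoretic identity $\partial E=\partial^*E$ with $\partial^*E$ the \emph{reduced} boundary as defined in the paper's preliminaries. Volume density bounds control only $|E\cap B_s(x)|_g$ and $|B_s(x)\setminus E|_g$; they say nothing about the existence of the measure-theoretic normal $\nu_E(x)$, which is what membership in $\partial^*E$ requires. What your argument actually yields is $\partial E=\partial^M E=\overline{\partial^*E}$, and this containment of $\partial^* E$ can be proper: the Simons cone in dimension $8$ is a perimeter minimizer, hence a $(1,\infty)$-quasiminimizer, and its vertex lies in $\partial E=\partial^M E$ but not in $\partial^* E$ (by symmetry the averaged normal vanishes there). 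So no density argument can "promote" Federer's a.e.\ statement to set equality for the reduced boundary. The correct conclusion --- and the one \cite{Kin13} actually proves, where $\partial^*E$ denotes the measure-theoretic (essential) boundary --- is $\partial E=\partial^M E=\overline{\partial^* E}$; you should either prove that statement or flag that $\partial^*E$ in the theorem must be read as $\partial^M E$. This weaker identity is all that is used elsewhere in the paper (e.g.\ in the proof of Theorem \ref{Thm-Existence}).

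A secondary point: your constants are not uniform in $r$ unless $r_1$ is a priori bounded. The comparisons $|B_\rho(x)|_g\le C\rho^n$ and $H^{n-1}(\partial B_\rho(x))=\omega_{n-1}\sinh^{n-1}\rho\le C\rho^{n-1}$, which you use in the ODI integration and in the Vitali counting, degrade exponentially as $\rho\to\infty$, so as written $C_0$ depends on $n$, $K$ \emph{and} $r_1$. This is harmless for the paper's application (where $r_1\le 1$), but it does not match the claimed dependence on $n$ and $K$ alone; also note that your porosity step invokes quasiminimality on $B_{2r}(x)$, which requires restricting to $r<r_1/2$ or re-centering.
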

For $F\subset\mathbb{H}^n$ measurable, we define the function $v_{F,\alpha}(\cdot)$ to be 
\begin{equation}\label{Defn-Vf}
	v_{F,\alpha}(x)=\int_F{\frac{1}{d_g(x,y)^{\alpha}}}\,dV_g(y),
\end{equation}
and hence $NL_{\alpha}(F)=\int_{F}{v_{F,\alpha}(x)}\,dV_g(x)$. We firstly prove two useful lemmas concerning $v_{F,\alpha}(\cdot)$ and $NL_{\alpha}(\cdot)$.
\begin{lem}\label{Lem-Bound}
	Given $0<\bar{m}<\infty$. If  $F\subset\mathbb{H}^n$ is a measurable set with $|F|_g\leq\bar{m}$. Then for $\forall 0<\alpha<n$, there exists a constant $c_3$ depending on $n, \bar{m}$ and $\alpha$, such that $||v_{F,\alpha}(x)||_{L^{\infty}(\mathbb{H}^n)}\leq c_3$.
\end{lem}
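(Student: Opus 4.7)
The plan is to prove the estimate by a pointwise bathtub-style rearrangement argument and then reduce everything to an explicit integral over a geodesic ball. Fix an arbitrary point $x \in \mathbb{H}^n$. Let $R = R(\bar{m}) > 0$ be the unique radius such that $|B_R(x)|_g = \bar{m}$ (which exists and depends only on $n$ and $\bar{m}$ by \eqref{Eq-Volume}). I will show that among all measurable sets $F$ with $|F|_g \leq \bar{m}$, the integral $\int_F d_g(x,y)^{-\alpha}\,dV_g(y)$ is maximized (as an upper bound) by the integral over $B_R(x)$, and then compute the latter explicitly.

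For the rearrangement step, I write $F = (F \cap B_R(x)) \sqcup (F \setminus B_R(x))$. Since $|F \setminus B_R(x)|_g \leq |B_R(x) \setminus F|_g$ (which follows from $|F|_g \leq |B_R(x)|_g = \bar{m}$), and since $d_g(x,y)^{-\alpha} \leq R^{-\alpha}$ on $\mathbb{H}^n \setminus B_R(x)$ while $d_g(x,y)^{-\alpha} \geq R^{-\alpha}$ on $B_R(x)$, the outer portion of $F$ can be "swapped" into the leftover $B_R(x) \setminus F$ without decreasing the integral. Explicitly,
\begin{equation*}
\int_{F \setminus B_R(x)} d_g(x,y)^{-\alpha}\,dV_g(y) \leq R^{-\alpha}|F \setminus B_R(x)|_g \leq R^{-\alpha}|B_R(x) \setminus F|_g \leq \int_{B_R(x) \setminus F} d_g(x,y)^{-\alpha}\,dV_g(y),
\end{equation*}
so that $v_{F,\alpha}(x) \leq \int_{B_R(x)} d_g(x,y)^{-\alpha}\,dV_g(y)$.

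Next I evaluate this upper bound using geodesic polar coordinates centered at $x$, under which $dV_g = \sinh^{n-1}(s)\,ds\,d\sigma_{\mathbb{S}^{n-1}}$ (see \eqref{Eq-Volume}). This yields
\begin{equation*}
v_{F,\alpha}(x) \leq \omega_{n-1}\int_0^{R(\bar{m})} s^{-\alpha}\sinh^{n-1}(s)\,ds =: c_3.
\end{equation*}
The integral is finite: near $s = 0$, $\sinh^{n-1}(s) \sim s^{n-1}$ so the integrand behaves like $s^{n-1-\alpha}$, which is integrable at $0$ precisely because $\alpha < n$, while on the compact interval $[\epsilon, R(\bar{m})]$ boundedness is clear. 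The resulting constant $c_3$ depends only on $n$, $\alpha$, and (through $R(\bar{m})$) on $\bar{m}$, and is independent of $x$ by homogeneity of $\mathbb{H}^n$ under its isometry group. Taking the supremum over $x$ gives the claimed $L^\infty$ bound. There is no serious obstacle here: the only subtle point is checking the rearrangement/swap argument at fixed $x$, but since the integrand is a monotone function of $d_g(x,\cdot)$ and we compare regions of comparable measure, the estimate is immediate.
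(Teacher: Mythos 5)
Your proof is correct and follows essentially the same route as the paper: replace $F$ by a geodesic ball centered at $x$ (the paper cites the Bathtub principle where you prove the swap inequality by hand, which is a valid self-contained substitute) and then bound the resulting radial integral, which converges near $0$ precisely because $\alpha<n$. The only cosmetic difference is that you compare with the ball of volume $\bar m$ rather than the ball of volume $|F|_g$ and leave the constant as an integral instead of the paper's explicit expression \eqref{In-Bound}; neither affects the validity of the statement.
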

\begin{proof}
	By the Bathtub principle (cf. \cite[Theorem 1.14]{Leib-Loss01}), assume that $|F|_g=|B_r|_g$ for some $r>0$, then
	\begin{align}
		v_{F,\alpha}(x)&\leq\int_{B_r(x)}{\frac{1}{d_g(x,y)^{\alpha}}}\,dV_g(y)\notag\\
		&= n b_n\int_0^r\frac{\sinh^{n-1}s}{s^{\alpha}}\,ds\notag\\
		&\leq \frac{nb_n}{n-\alpha}\cosh^{n-1}(b_n^{-\frac{1}{n}}\bar{m}^{\frac{1}{n}})r^{n-\alpha}.\label{In-b1}
	\end{align}
The second inequality is due to the fact that since $b_n r^n\leq|B_r|_g\leq\bar{m}$, we have $\sinh s\leq \cosh(b_n^{-\frac{1}{n}}\bar{m}^{\frac{1}{n}})\cdot s$ for $0\leq s\leq r$. Also, \eqref{In-b1} gives
\begin{equation}\label{In-Bound}
	v_{F,\alpha}(x)\leq \frac{n b_n^{\frac{\alpha}{n}}}{n-\alpha}\cosh^{n-1}(b_n^{-\frac{1}{n}}\bar{m}^{\frac{1}{n}}){\bar{m}}^{\frac{n-\alpha}{n}}:=c_3.
\end{equation}
\end{proof}
\begin{lem}\label{Lem-NL}
	Given $0<\bar{m}<\infty$, if $E, F\subset\mathbb{H}^n$ are two measurable sets with $|E|_g, |F|_g\leq \bar{m}$, then for $\forall 0<\alpha<n$, there holds
	\begin{equation}\label{Eq-NL}
		|NL_{\alpha}(E)-NL_{\alpha}(F)|\leq 2c_3|E\Delta F|_g,
	\end{equation}
	where $c_3:=c_3(n,\bar{m},\alpha)$ is the constant given in Lemma \ref{Lem-Bound}.
\end{lem}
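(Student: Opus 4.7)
The plan is to reduce the difference $NL_\alpha(E)-NL_\alpha(F)$ to two single integrals involving $v_{E,\alpha}$ and $v_{F,\alpha}$, and then apply the uniform bound from Lemma \ref{Lem-Bound}. The key algebraic identity is the ``telescoping'' decomposition
\begin{equation*}
\chi_E(x)\chi_E(y)-\chi_F(x)\chi_F(y) = \bigl(\chi_E(x)-\chi_F(x)\bigr)\chi_E(y) + \chi_F(x)\bigl(\chi_E(y)-\chi_F(y)\bigr),
\end{equation*}
which separates the symmetric kernel into two pieces, each of which pairs one ``difference'' factor with one ``characteristic'' factor.

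Substituting this identity into the definition \eqref{defn-doubleint} of $NL_\alpha$ and using Fubini's theorem, I would rewrite
\begin{equation*}
NL_\alpha(E)-NL_\alpha(F) = \int_{\mathbb{H}^n}\bigl(\chi_E(x)-\chi_F(x)\bigr)v_{E,\alpha}(x)\,dV_g(x) + \int_{\mathbb{H}^n}\bigl(\chi_E(y)-\chi_F(y)\bigr)v_{F,\alpha}(y)\,dV_g(y).
\end{equation*}
Taking absolute values inside the integrals and using $|\chi_E - \chi_F| = \chi_{E\Delta F}$, together with the uniform $L^\infty$ bounds $\|v_{E,\alpha}\|_\infty, \|v_{F,\alpha}\|_\infty \leq c_3$ guaranteed by Lemma \ref{Lem-Bound} (which applies since $|E|_g, |F|_g \leq \bar{m}$), the right-hand side is bounded by $c_3 |E\Delta F|_g + c_3 |E\Delta F|_g = 2c_3 |E\Delta F|_g$, giving \eqref{Eq-NL}.

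There is no real obstacle here; the only point requiring minor care is that the decomposition must use $v_{E,\alpha}$ in the first term and $v_{F,\alpha}$ in the second (not both $v_{E,\alpha}$ or both $v_{F,\alpha}$) so that Fubini combines the two variables correctly and Lemma \ref{Lem-Bound} is applicable to each factor. The symmetry of the kernel $d_g(x,y)^{-\alpha}$ ensures the integrals telescope cleanly, so the whole argument is essentially a two-line calculation once the identity is in place.
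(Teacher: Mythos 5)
Your proposal is correct and follows essentially the same route as the paper: the same telescoping decomposition of $\chi_E(x)\chi_E(y)-\chi_F(x)\chi_F(y)$, Fubini to reduce to integrals of $v_{E,\alpha}+v_{F,\alpha}$ over $E\Delta F$, and the uniform bound from Lemma \ref{Lem-Bound}. No issues.
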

\begin{proof}
	Note that 
\begin{align}
	&NL_{\alpha}(E)-NL_{\alpha}(F)\notag\\
	=&\int_{\mathbb{H}^n}\int_{\mathbb{H}^n}{\left(\frac{\chi_E(x)(\chi_E(y)-\chi_F(y))}{d_g(x,y)^{\alpha}}+\frac{\chi_F(y)(\chi_E(x)-\chi_F(x))}{d_g(x,y)^{\alpha}}\right)}dV_g(x)dV_g(y)\notag\\
	=&\int_{E\setminus F}(v_{E,\alpha}(x)+v_{F,\alpha}(x))\,dV_g(x)-\int_{F\setminus E}(v_{E,\alpha}(x)+v_{F,\alpha}(x))\,dV_g(x)\notag\\
	\leq&\int_{E\Delta F}(v_{E,\alpha}(x)+v_{F,\alpha}(x))\,dV_g(x)\notag\\
	\leq&2c_3|E\Delta F|_g,\label{In-NL}
\end{align}
which implies this lemma directly.
\end{proof}
Then we turn to prove that a minimizer of the minimization problem \eqref{Prob-mini} is indeed an $(\omega,r_0)$-minimizer for a suitable choice of $(\omega, r_0)$.
\begin{prop}\label{Prop-Quasi}
	Assume that $E$ is a minimizer to the minimization problem \eqref{Prob-mini} with $|E|_g=m$. Then for any given $r_0>0$, there exists a suitable $\omega>0$ such that $E$ is an $(\omega, r_0)$-minimizer for the perimeter.
\end{prop}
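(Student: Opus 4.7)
The plan is to use the $\Phi_\lambda$-transformation from \S\ref{Subsec-model} as a volume-adjustment tool, combined with the Lipschitz-type estimates in Propositions \ref{Prop-Lip} and \ref{Prop-VP} and the continuity bound in Lemma \ref{Lem-NL}. A competitor $F$ with $E\Delta F\subset\subset B_r(x)$ need not satisfy $|F|_g=m$, so the minimality of $E$ cannot be invoked directly; instead I would push $F$ to $F':=\Phi_\lambda(F)$ with $\lambda:=(|F|_g/m)^{1/(n-1)}$, which by Proposition \ref{Prop-VP} satisfies $|F'|_g=m$ and is therefore an admissible competitor.

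Two harmless reductions come first. If $|E\Delta F|_g\geq\varepsilon_0$ for a small threshold $\varepsilon_0$ to be fixed later, then $P(E)\leq P(F)+\omega|E\Delta F|_g$ holds trivially once $\omega\geq P(E)/\varepsilon_0$, so I may assume $|E\Delta F|_g<\varepsilon_0$. Similarly, if $P(F)>P(E)$ the desired inequality is immediate, so I may assume $P(F)\leq P(E)$. Under these reductions the parameter $\lambda$ is close to $1$, with $|\lambda-1|\leq C(n,m)|E\Delta F|_g$ (using $||F|_g-m|\leq|E\Delta F|_g$), and the quantities $P(F)$, $NL_\alpha(F)$, $NL_\alpha(E)$ are all bounded by constants depending only on $E$, $m$, $n$, $\alpha$ (the nonlocal bounds via Lemma \ref{Lem-Bound}).

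For the error estimates, Proposition \ref{Prop-VP} gives $P(F')\leq\max\{\lambda^{2-2n},1\}P(F)$, and since $\max\{\lambda^{2-2n},1\}-1=O(|\lambda-1|)$ near $\lambda=1$, I obtain $P(F')-P(F)\leq C_1|E\Delta F|_g$ using $P(F)\leq P(E)$. For the nonlocal term, the change of variables $x=\Phi_\lambda(u)$, $y=\Phi_\lambda(v)$ combined with the estimate \eqref{Es-dis} yields the two-sided sandwich
\begin{equation*}
\lambda^{2(1-n)}\min\{\lambda^{2\alpha},1\}\,NL_\alpha(F)\leq NL_\alpha(F')\leq\lambda^{2(1-n)}\max\{\lambda^{2\alpha},1\}\,NL_\alpha(F),
\end{equation*}
from which $|NL_\alpha(F')-NL_\alpha(F)|\leq C_2|E\Delta F|_g$ follows after invoking the bound on $NL_\alpha(F)$ from Lemma \ref{Lem-Bound}.

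Assembling the pieces, minimality of $E$ gives $\mathcal{E}(E)\leq\mathcal{E}(F')$, i.e.\ $P(E)\leq P(F')+\gamma(NL_\alpha(F')-NL_\alpha(E))$. Splitting $NL_\alpha(F')-NL_\alpha(E)=(NL_\alpha(F')-NL_\alpha(F))+(NL_\alpha(F)-NL_\alpha(E))$, inserting the perimeter and nonlocal estimates of the previous paragraph, and using Lemma \ref{Lem-NL} to control $|NL_\alpha(E)-NL_\alpha(F)|\leq 2c_3|E\Delta F|_g$ yields the desired bound $P(E)\leq P(F)+\omega|E\Delta F|_g$ for a suitable $\omega$. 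The main technical nuance is ensuring genuinely \emph{linear} (rather than merely H\"older) dependence on $|E\Delta F|_g$ in the $\Phi_\lambda$-error terms; this is precisely what the restriction $|\lambda-1|\ll 1$ from the reduction step guarantees, since the factors $\max\{\lambda^{2-2n},1\}$ and $\lambda^{2(1-n)}\max\{\lambda^{2\alpha},1\}$ are smooth functions of $\lambda$ taking the value $1$ at $\lambda=1$.
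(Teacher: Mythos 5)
Your argument is correct, but it takes a different route from the paper. The paper first upgrades the constrained minimality of $E$ to minimality for a penalized functional $\mathcal{E}(F)+\Lambda\left||F|_g-m\right|$: it argues by contradiction with $\Lambda_h\to\infty$, rescales the hypothetical counterexamples $F_h$ to volume $m$ via $\Phi_{\lambda_h}$, and extracts a contradiction from the asymptotics of $\lambda_h^{-2(n-\alpha-1)}$ versus $\Lambda_h$; the $(\omega,r_0)$-minimality then drops out in two lines from $\left||F|_g-m\right|\leq|E\Delta F|_g$ and Lemma \ref{Lem-NL}. You instead skip the penalization entirely and rescale the local competitor itself, reducing first to the regime $|E\Delta F|_g<\varepsilon_0$ and $P(F)\leq P(E)$ so that $\lambda=(|F|_g/m)^{1/(n-1)}$ satisfies $|\lambda-1|\leq C|E\Delta F|_g$, and then reading off the linear error terms from Propositions \ref{Prop-Lip} and \ref{Prop-VP} and Lemmas \ref{Lem-Bound} and \ref{Lem-NL}. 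Both proofs rest on exactly the same $\Phi_\lambda$ machinery; the difference is what each buys. The paper's route yields the $\Lambda$-minimality statement as a standalone intermediate fact, but the constant $\Lambda$ is produced by a compactness/contradiction argument and is not explicit. Your route is shorter, avoids the contradiction argument, and gives an explicit $\omega$ in terms of $P(E)$, $m$, $n$, $\alpha$, $\gamma$ — it is essentially the direct-rescaling computation the paper itself performs later in Proposition \ref{Prop-Qp} when it needs quantitative control of $\omega$. The one nuance worth making explicit in a write-up is that $\varepsilon_0$ must be taken smaller than $m$ so that $|F|_g>0$ and $\lambda$ is well defined, and that the reference volume $\bar m$ in Lemmas \ref{Lem-Bound} and \ref{Lem-NL} should be fixed as, say, $m+\varepsilon_0$ before the constants $c_3$, $C_1$, $C_2$ are declared.
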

\begin{proof}
	We firstly prove that there exists $\Lambda>0$ such that $E$ is a solution to the penalized problem
	\begin{equation}\label{Eq-pena}
		\min\{\mathcal{E}(F)+\Lambda\left\vert|F|_g-m\right\vert:F\in\mathbb{H}^n\}.
	\end{equation}
Then it is sufficient to show that there exists $\Lambda>0$ such that if $F\subset\mathbb{H}^n$ satisfies
\begin{equation*}
	\mathcal{E}(F)+\Lambda\left||F|_g-m\right|\leq \mathcal{E}(E),
\end{equation*}
then $|F|_g=m$. Assume by contradiction that there exists $\Lambda_h\to\infty$ and  $F_h\subset\mathbb{H}^n$ such that 
\begin{equation}\label{Eq-Quasi1}
	\mathcal{E}(F_h)+\Lambda_h\left||F_h|_g-m\right|\leq \mathcal{E}(E)\quad \text{and}\quad |F_h|_g\neq m.
\end{equation}
Using the upper-half space model $U^n$ and the transformation $\Phi_{\lambda}$  as described in \S\ref{Subsec-model}, we construct a family of new sets $\{E_h\}$ as follows:
\begin{align*}
	\Phi_{\lambda_h}: U^n&\to U^n,\notag\\
	(x_1,\dots,x_{n-1},x_n)&\mapsto(x_1,\dots,x_{n-1},\lambda_h x_n).
\end{align*}
The constants $\{\lambda_h\}$ are chosen to satsify $|E_h|_g=m$, hence by Proposition \ref{Prop-VP}, we have 
\begin{equation*}
	\lambda_h=\left(\frac{|F_h|_g}{m}\right)^{\frac{1}{n-1}}\neq 1.
\end{equation*}
Note that since $\Lambda_h\to\infty$, we have $\lambda_h\to 1$ by \eqref{Eq-Quasi1}. Next we consider the following two cases. 

	\noindent$\bullet\,\,\bf{Case\,1}.$ There exists a subsequence $\{\lambda_{h_k}\}\subset \{\lambda_h\}$ with countably many elements such that $\lambda_{h_k}>1$ and $\lambda_{h_k}\to 1$ as $k\to\infty$. In this case, note that 
	\begin{equation}\label{Ex-Quasi2}
		\mathcal{E}(E_{h_k})=P(E_{h_k})+\gamma NL_{\alpha}(E_{h_k}).
	\end{equation}
	By \eqref{Es-dis} and \eqref{Re-perimeter}, we have the following estimates:
	\begin{align}
		P(E_{h_k})&\leq P(F_{h_k}),\label{Eq-Quasi3}\\
		NL_{\alpha}(E_{{h_k}})&=\int_{E_{h_k}}\int_{E_{h_k}}{\frac{1}{(x_n')^n}\frac{1}{(y_n')^n}\frac{1}{d_g(x',y')^{\alpha}}\,dx'\,dy'}\notag\\
		&\leq \lambda_{h_k}^{-2(n-\alpha-1)}NL_{\alpha}(F_{h_k}).\label{Eq-Quasi4}
	\end{align}
	Combining \eqref{Ex-Quasi2} with \eqref{Eq-Quasi3} and \eqref{Eq-Quasi4} gives
	\begin{align}
		\mathcal{E}(E_{h_k})&\leq P(F_{h_k})+\gamma \lambda_{h_k}^{-2(n-\alpha-1)}NL_{\alpha}(F_{h_k})\notag\\
		&=\mathcal{E}(F_{h_k})+\gamma(\lambda_{h_k}^{-2(n-\alpha-1)}-1)NL_{\alpha}(F_{h_k})\notag\\
		&\leq \mathcal{E}(E)+\gamma(\lambda_{h_k}^{-2(n-\alpha-1)}-1)NL_{\alpha}(F_{h_k})-\Lambda_{h_k}\left||F_{h_k}|_g-m\right|\notag\\
		&=\mathcal{E}(E)+\gamma(\lambda_{h_k}^{-2(n-\alpha-1)}-1)NL_{\alpha}(F_{h_k})-\Lambda_{h_k}(1-\lambda_{h_k}^{1-n})|F_{h_k}|_g\notag\\
		&=\mathcal{E}(E)+(1-\lambda_{h_k}^{1-n})|F_{h_k}|_g\left[\gamma\frac{\lambda_{h_k}^{-2(n-\alpha-1)}-1}{1-\lambda_{h_k}^{1-n}}\frac{NL_{\alpha}(F_{h_k})}{|F_{h_k}|_g}-\Lambda_{h_k}\right]\notag\\
		&<\mathcal{E}(E), \quad\text{if $k$ is sufficiently large},
	\end{align}
	since 
	\begin{align*}
		\lim_{k\to\infty}{\frac{\lambda_{h_k}^{-2(n-\alpha-1)}-1}{1-\lambda_{h_k}^{1-n}}}&=-\frac{2(n-\alpha-1)}{n-1},\\
		\limsup_{k\to\infty}NL_{\alpha}(F_{h_k})&=\limsup_{k\to\infty}\int_{F_{h_k}}{v_{F_{h_k},\alpha}}\,dV_g\leq c_3 m,\\
		\lim_{k\to\infty} |F_{h_k}|_g&=m, \quad \lim_{k\to\infty} \Lambda_{h_k}=\infty,
	\end{align*}
	where the inequality is due to Lemma \ref{Lem-Bound}. Then we get a contradiction with the assumption that $E$ is a minimizer to the minimization problem \eqref{Prob-mini} and  hence that there exists $\Lambda>0$ such that $E$ is a solution to the penalized problem \eqref{Eq-pena}.

	 \noindent$\bullet\,\,\bf{Case\,2.}$ There exists a subsequence $\{\lambda_{h_k}\}\subset \{\lambda_h\}$ with countably many elements such that $\lambda_{h_k}<1$ and $\lambda_{h_k}\to 1$ as $k\to\infty$. In this case, we can argue as $\bf{Case\,1}$ to deduce that $E$ is a solution to the penalized problem \eqref{Eq-pena}.

Combining the two cases above, given any $r_0>0$ and for every geodesic ball $B_r(x)$ with $r\leq r_0$ and every set of finite perimeter $F\subset\mathbb{H}^n$ such that $E\Delta F\subset\subset B_r(x)$, we have 
\begin{align}
	P(E)&\leq P(F)+\gamma(NL_{\alpha}(F)-NL_{\alpha}(E))+\Lambda\left||F|_g-m\right|\notag\\
	&\leq P(F)+(2\gamma c_3+\Lambda)|E\Delta F|_g,\label{In-Quasi6}
\end{align}
where we have used the fact that $||F|_g-m|\leq |E\Delta F|_g$ and Lemma \ref{Lem-NL} with $\bar{m}=m+|B_{r_0}|_g$. This completes the proof with $\omega=2\gamma c_3+\Lambda$ for any given positive $r_0$.
\end{proof}
\subsection{Fuglede’s type estimates in $\mathbb{H}^n$} In this subsection, we give Fuglede’s type estimates of $P(E)$ and $NL_{\alpha}(E)$ for a nearly spherical set $E\subset\mathbb{H}^n$, which are crucial in the proof of Theorem \ref{Thm-Existence}.
\begin{thm}\label{Thm-Fuglede}
	For any $n\geq 2$, $0<\alpha<n$ and $R_0>0$, there exists a constant $\varepsilon_0\in(0,\frac{1}{2}]$ depending only on $n$ and $R_0$, such that if $E\subset\mathbb{H}^n$ is a nearly spherical set with $|E|_g=|B_r|_g$ for some $r\in(0,R_0]$, whose boundary is given by 
	\begin{equation}\label{Eq-Fuglede}
		\partial E=\{(x,r(1+u(x))):x\in\mathbb{S}^{n-1}\}
	\end{equation}
	under the geodesic polar coordinate, where $u:\mathbb{S}^{n-1}\to\mathbb{R}$ is a $C^1$ function, with $||u||_{C^1(\mathbb{S}^{n-1})}\leq\varepsilon_0$. Then we have the estimate 
	\begin{equation}\label{Eq-F1}
		NL_{\alpha}(B_r)-NL_{\alpha}(E)\leq L_0{\sinh}^{2n-\alpha} r\left([u]^2_{\frac{\alpha+1-n}{2}}+||u||^2_{L^2(
			\mathbb{S}^{n-1})}\right)
	\end{equation}
	for some constant $L_0>0$ depending only on $n,\alpha$ and $R_0$. Here $[u]^2_{\frac{\alpha+1-n}{2}}$ is defined by
	\begin{equation*}
		[u]^2_{\frac{\alpha+1-n}{2}}=\int_{\mathbb{S}^{n-1}}\int_{\mathbb{S}^{n-1}}\frac{|u(x)-u(y)|^2}{|x-y|^{\alpha}}\,d\sigma_x\,d\sigma_y.
	\end{equation*}
\end{thm}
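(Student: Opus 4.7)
The plan is to adapt the Euclidean Fuglede--type argument of \cite{Figalli15} to $\mathbb{H}^n$, working in geodesic polar coordinates $(\omega,s)\in\mathbb{S}^{n-1}\times(0,\infty)$ centered at the center $p_0$ of $B_r$. In these coordinates $dV_g=\sinh^{n-1}(s)\,ds\,d\sigma$ and $\partial E=\{s=r(1+u(\omega))\}$. Setting $f:=\chi_E-\chi_{B_r}$, the starting identity is
\begin{equation*}
NL_\alpha(B_r)-NL_\alpha(E)=-2\int_{\mathbb{H}^n}v_{B_r,\alpha}(y)f(y)\,dV_g(y)-\iint_{\mathbb{H}^n\times\mathbb{H}^n}\frac{f(x)f(y)}{d_g(x,y)^\alpha}\,dV_g(x)\,dV_g(y),
\end{equation*}
whose linear and quadratic pieces I would treat separately.

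For the linear piece, radial symmetry of $B_r$ forces $v_{B_r,\alpha}(y)=h(d_g(y,p_0))$ for some smooth $h$. Setting $\phi(s):=h(s)\sinh^{n-1}(s)$ and Taylor--expanding in $(s-r)$ turns the piece into $\int_{\mathbb{S}^{n-1}}\bigl(\phi(r)ru+\tfrac12\phi'(r)r^2u^2\bigr)\,d\sigma$ plus a cubic remainder. The analogous expansion of the volume constraint $|E|_g=|B_r|_g$ forces $\int_{\mathbb{S}^{n-1}}u\,d\sigma=-\tfrac{r(n-1)\coth(r)}{2}\|u\|_{L^2}^2+O(\|u\|_{C^0}\|u\|_{L^2}^2)$; substituting and using the algebraic identity $\phi'(r)-(n-1)\coth(r)\phi(r)=h'(r)\sinh^{n-1}(r)$, the linear--in--$u$ contributions cancel and the whole piece reduces, up to cubic errors, to $-r^2h'(r)\sinh^{n-1}(r)\|u\|_{L^2}^2$. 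A direct estimate of $h'(r)$ through the defining integral, in the spirit of Lemma \ref{Lem-Bound}, gives $|h'(r)|\le C(n,\alpha,R_0)\sinh^{n-1-\alpha}(r)$, so (using $r^2\le C(R_0)\sinh^2 r$ for $r\le R_0$) this piece is dominated by $C(n,\alpha,R_0)\sinh^{2n-\alpha}(r)\|u\|_{L^2}^2$.

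For the quadratic piece, parametrize both factors of $f$ in polar coordinates and freeze the kernel $\sinh^{n-1}(s_1)\sinh^{n-1}(s_2)d_g^{-\alpha}$ at $(s_1,s_2)=(r,r)$; the leading contribution becomes
\begin{equation*}
-r^2\sinh^{2(n-1)}(r)\iint_{\mathbb{S}^{n-1}\times\mathbb{S}^{n-1}}\frac{u(\omega_1)u(\omega_2)}{d_g((\omega_1,r),(\omega_2,r))^\alpha}\,d\sigma_1\,d\sigma_2.
\end{equation*}
Applying $u(\omega_1)u(\omega_2)=\tfrac12(u(\omega_1)^2+u(\omega_2)^2)-\tfrac12(u(\omega_1)-u(\omega_2))^2$, the $u^2$ piece enters with a favorable (nonpositive) sign by spherical symmetry and is discarded. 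For the remaining difference piece I would use the hyperbolic law of cosines $\cosh d_g((\omega_1,r),(\omega_2,r))=1+\tfrac12\sinh^2(r)|\omega_1-\omega_2|^2$, from which a continuity/compactness argument produces the uniform lower bound $d_g((\omega_1,r),(\omega_2,r))\ge c(R_0)\sinh(r)|\omega_1-\omega_2|$ valid for all $r\le R_0$ and $\omega_1,\omega_2\in\mathbb{S}^{n-1}$. This bounds the kernel by $C\sinh^{-\alpha}(r)|\omega_1-\omega_2|^{-\alpha}$, and multiplying by the prefactor $r^2\sinh^{2(n-1)}(r)\le C(R_0)\sinh^{2n-\alpha}(r)\sinh^\alpha(r)$ yields precisely $C(n,\alpha,R_0)\sinh^{2n-\alpha}(r)[u]^2_{(\alpha+1-n)/2}$.

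The main difficulty will be rigorously controlling the freeze--and--Taylor errors: replacing $\sinh^{n-1}(s_i)d_g((\omega_1,s_1),(\omega_2,s_2))^{-\alpha}$ by its value at $s_1=s_2=r$ must produce an error absorbable into $\varepsilon_0\sinh^{2n-\alpha}(r)([u]^2_{(\alpha+1-n)/2}+\|u\|_{L^2}^2)$, which requires uniform $C^1$--type estimates of this singular kernel on the thin slab $|s_i-r|\le r\varepsilon_0$. Choosing $\varepsilon_0=\varepsilon_0(n,R_0)$ sufficiently small then absorbs these errors, together with the $O(\|u\|_{C^0}\|u\|_{L^2}^2)$ cubic remainder from the linear step, and delivers \eqref{Eq-F1}.
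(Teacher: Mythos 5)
Your decomposition $\chi_E=\chi_{B_r}+f$ into a linear piece $-2\int v_{B_r,\alpha}f$ and a quadratic piece is not the paper's route, and it runs into a genuine obstruction in the regime $\alpha\in[n-1,n)$, which the theorem must cover. Your treatment of the linear piece hinges on a second-order Taylor expansion of $\phi(s)=h(s)\sinh^{n-1}(s)$ at $s=r$ with $|h'(r)|\le C\sinh^{n-1-\alpha}(r)$, obtained by differentiating under the integral "in the spirit of Lemma \ref{Lem-Bound}". But that differentiation produces $\int_{B_r}d_g(y,z)^{-\alpha-1}\,dV_g(z)$ evaluated at $y\in\partial B_r$, which diverges as soon as $\alpha\ge n-1$: the radial derivative of the Riesz potential of a ball blows up at the boundary like $|s-r|^{n-1-\alpha}$ there. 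Consequently $\phi'(r)$ is not finite, the cancellation with the volume constraint does not close at order $u^2$, and the linear piece is \emph{not} controlled by $\|u\|_{L^2}^2$ alone. This is precisely why the statement carries the seminorm $[u]^2_{(\alpha+1-n)/2}$, whose exponent is positive exactly when $\alpha>n-1$; your scheme routes all of the seminorm into the quadratic piece and none into the linear piece, which cannot work for large $\alpha$. A second, self-acknowledged gap is the kernel-freezing error in the quadratic piece: the relevant derivative $\partial_{s_1}\bigl(d_g^{-\alpha}\bigr)$ behaves like $d_g^{-\alpha-1}$ in the direction transverse to the diagonal $s_1=s_2$, i.e. it is strictly more singular than the kernel you start with, so "uniform $C^1$-type estimates on the thin slab" are not available in the form you need, and it is not clear the error is absorbable.

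The paper sidesteps both issues with a different splitting. Writing $NL_\alpha(E)$ as a four-fold integral with radial limits $a=r(1+tu(x))$, $b=r(1+tu(y))$, it applies the identity $2\int_0^a\int_0^b=\int_0^a\int_0^a+\int_0^b\int_0^b-\int_a^b\int_a^b$. The first resulting term is handled by rescaling \emph{both} radial variables by the \emph{same} factor $1+tu(x)$, so the kernel is only ever compared to a dilate of itself (Lemma \ref{Lem-appen--1}); combined with the volume constraint $\bigl|t\int u\bigr|\le Ct^2\|u\|^2_{L^2}$ this yields the $\|u\|^2_{L^2}$ contribution with no differentiation of the potential at the boundary. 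The second term lives on the square $[a,b]^2$ of side $rt|u(x)-u(y)|$, where the kernel is simply bounded above by its value at $(r,r)$ together with $d_g\ge c\sinh(r)|x-y|$, producing the Gagliardo seminorm directly. I would encourage you to restructure your argument along these lines (or, at minimum, restrict your current argument to $\alpha<n-1$, where $[u]^2_{(\alpha+1-n)/2}\lesssim\|u\|^2_{L^2}$ and your linear-piece estimate is legitimate).
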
  
\begin{proof}
	We slightly change the notation, we assume that $|E_t|_g=|B_r|_g$, where 
	\begin{equation*}
		\partial E_t=\{(x,r(1+tu(x))):x\in\mathbb{S}^{n-1}\},
\end{equation*}
which satisfies $||u||_{C^1(\mathbb{S}^{n-1})}\leq\frac{1}{2}$ and $t\in(0,2\varepsilon_0]$. Then for any two points $\tilde{x}=(x,\rho)$ and $\tilde{y}=(y,s)$ under the geodesic polar coordinate, the distance $d_g(\tilde{x},\tilde{y})$ satisfies
\begin{equation}\label{Eq-F3}
	\cosh d_g(\tilde{x},\tilde{y})=\cosh\rho\cosh s-\sinh\rho\sinh s\cos\beta,
\end{equation}
where we have used the hyperbolic cosine law and $\beta$ is the angle between $x$ and $y$,
which satisfies
\begin{equation}\label{Eq-F4}
	\cos\beta=\frac{2-|x-y|^2}{2}.
\end{equation}
Combining \eqref{Eq-F3} with \eqref{Eq-F4}, we have
\begin{align*}
	\cosh d_g(\tilde{x},\tilde{y})&=\cosh\rho\cosh s-\sinh\rho\sinh s+\sinh\rho\sinh s(1-\cos\beta)\\
	&=\cosh(\rho-s)+\frac{\sinh\rho\sinh s}{2}|x-y|^2,
\end{align*} 
which gives
\begin{equation*}
	d_g(\tilde{x},\tilde{y})=\mathrm{arccosh}\left[\cosh(\rho-s)+\frac{\sinh\rho\sinh s}{2}|x-y|^2\right].
\end{equation*}
If we define the function $f_{\theta}(\rho,s)$ as
\begin{equation}\label{Eq-F5}
	f_{\theta}(\rho,s)=\frac{\sinh^{n-1}\rho\sinh^{n-1}s}{\mathrm{arccosh}^{\alpha}\left[\cosh(\rho-s)+\frac{\sinh\rho\sinh s}{2}{\theta}^2\right]}.
\end{equation}
Then $NL_{\alpha}(E_t)$ can be expressed by 
\begin{equation}\label{Eq-F6}
	NL_{\alpha}(E_t)=\int_{\mathbb{S}^{n-1}}\,d\sigma_x\int_{\mathbb{S}^{n-1}}\,d\sigma_y\int_0^{r(1+tu(x))}\int_0^{r(1+tu(y))}{f_{|x-y|}(\rho,s)}\,d\rho\,ds.
\end{equation}
By exploiting the identity
\begin{equation*}
	2\int_0^a\int_0^b=\int_0^a\int_0^a+\int_0^b\int_0^b-\int_a^b\int_a^b,
\end{equation*}
we deduce that 
\begin{align}
	NL_{\alpha}(E_t)=&\int_{\mathbb{S}^{n-1}}\,d\sigma_x\int_{\mathbb{S}^{n-1}}\,d\sigma_y\int_0^{r(1+tu(x))}\int_0^{r(1+tu(x))}{f_{|x-y|}(\rho,s)}\,d\rho\,ds\notag\\
	&-\frac{1}{2}\int_{\mathbb{S}^{n-1}}\,d\sigma_x\int_{\mathbb{S}^{n-1}}\,d\sigma_y\int_{r(1+tu(y))}^{r(1+tu(x))}\int_{r(1+tu(y))}^{r(1+tu(x))}{f_{|x-y|}(\rho,s)}\,d\rho\,ds\notag\\
	=:&I-II.\label{Eq-F7}
\end{align}
By taking $u=0$ in \eqref{Eq-F7}, we have
\begin{equation}\label{Eq-F8}
	NL_{\alpha}(B_r)=\int_{\mathbb{S}^{n-1}}\,d\sigma_x\int_{\mathbb{S}^{n-1}}\,d\sigma_y\int_0^{r}\int_0^{r}{f_{|x-y|}(\rho,s)}\,d\rho\,ds.
\end{equation}
Combining \eqref{Eq-F7} with \eqref{Eq-F8} gives
\begin{equation}\label{Eq-F9}
		NL_{\alpha}(B_r)-NL_{\alpha}(E_t)=NL_{\alpha}(B_r)-I+\frac{t^2 r^2}{2}h(t),
\end{equation}
where $h(t)$ is defined as 
\begin{equation}\label{Eq-F10}
	h(t)=\int_{\mathbb{S}^{n-1}}\,d\sigma_x\int_{\mathbb{S}^{n-1}}\,d\sigma_y\int_{u(y)}^{u(x)}\int_{u(y)}^{u(x)}{f_{|x-y|}(r(1+t\rho),r(1+ts))}\,d\rho\,ds.
\end{equation}
By Lemma \ref{Lem-appen_0} and \ref{Lem-appen}, there exists two constants $L'$ and $L''$ depending on $n,\alpha$ and $R_0$, such that  
\begin{align}
	&NL_{\alpha}(B_r)-I\leq t^2L' \sinh^{2n-\alpha}r||u||^2_{L^2(\mathbb{S}^{n-1})},\label{Eq-F10.2}\\
	&\left|\frac{\partial f_{|x-y|}(r(1+\tau\rho),r(1+\tau s))}{\partial\tau}\right|\leq L''f_{|x-y|}(r,r),\,\,\forall\tau\in(0,t)\label{Eq-F10.5}.
\end{align}
From \eqref{Eq-F10.5} we have
\begin{equation}\label{Eq-F11}
	|h'(\tau)|\leq L''h(0),\,\,\forall\tau\in(0,t).
\end{equation}
Next we estimate $h(0)$. Let $w=\mathrm{arccosh}\left[1+\frac{\sinh^2 r}{2}|x-y|^2\right]$, i.e. $\cosh w=1+\frac{\sinh^2 r}{2}|x-y|^2$. Since $|x-y|\leq 2$, $r\leq R_0$, there exists $L'''(R_0)$, such that $\cosh w\leq 1+\frac{L'''}{2}w^2$, which implies
\begin{equation}\label{Eq-F12}
	w\geq\frac{\sinh r}{\sqrt{L'''}}|x-y|.
\end{equation}
Combining \eqref{Eq-F10}-\eqref{Eq-F12}, we get
\begin{align}
	h(t)&\leq \left(1+tL''\right)h(0)\notag\\
	&=(1+tL'')\int_{\mathbb{S}^{n-1}}\,d\sigma_x\int_{\mathbb{S}^{n-1}}\,d\sigma_y\int_{u(y)}^{u(x)}\int_{u(y)}^{u(x)}{f_{|x-y|}(r,r)}\,d\rho\,ds\notag\\
	&\leq (1+tL'')(L''')^{\frac{\alpha}{2}}\sinh^{2n-2-\alpha} r [u]^2_{\frac{\alpha+1-n}{2}}.\label{Eq-F13}
\end{align}
Then combining \eqref{Eq-F9}, \eqref{Eq-F10.2} and \eqref{Eq-F13} gives
\begin{equation}
	NL_{\alpha}(B_r)-NL_{\alpha}(E_t)\leq t^2\sinh^{2n-\alpha} r\left[L'||u||^2_{L^2(\mathbb{S}^{n-1})}+\frac{1+tL''}{2}(L''')^{\frac{\alpha}{2}}[u]^2_{\frac{\alpha+1-n}{2}}\right],
\end{equation}
which obviously implies the announced result.
\end{proof}
\begin{rem}
	Similar estimate in $\mathbb{R}^n$ has been proved in \cite[Lemma 5.3]{Figalli15}.
\end{rem}
\begin{rem}\label{Rem-R1}
	There holds $[u]^2_{\frac{\alpha+1-n}{2}}\leq C(n,\alpha) ||\nabla u||^2_{L^2(\mathbb{S}^{n-1})}$ according to the arguments in \cite[Pages 486-490]{Figalli15}, hence \eqref{Eq-F1} also implies that there exists a constant $L_1$, depending only on $n$, $\alpha$ and $R_0$, such that
	\begin{equation}\label{Eq-F2}
			NL_{\alpha}(B_r)-NL_{\alpha}(E)\leq L_1 \sinh^{2n-\alpha} r||u||^2_{W^{1,2}(\mathbb{S}^{n-1})}.
	\end{equation}
\end{rem}
\begin{rem}\label{Rem-Fuglede}
	Under an additional assumption that the barycenter of $E$ is in the origin, Bögelein, Duzaar and Scheven (cf. \cite[Theorem 4.1]{Bogelein15}) have proved an Fuglede's type estimate for the perimeter, i.e., there exists a constant $\varepsilon_1\in(0,\frac{1}{2}]$ depending only on $n$ and $R_0$, such that if $||u||_{C^1(\mathbb{S}^{n-1})}\leq\varepsilon_1$, there holds
	\begin{equation}
		P(E)-P(B_r)\geq L_2P(B_r)||u||^2_{W^{1,2}(\mathbb{S}^{n-1})}=nb_nL_2\sinh^{n-1}r||u||^2_{W^{1,2}(\mathbb{S}^{n-1})}
	\end{equation}
for some constant $L_2$ depending only on $n$ and $R_0$.
\end{rem}
\section{Basic properties of minimizers}\label{Sec-Basic es}
Assume that $E\subset\mathbb{H}^n$ is a minimizer solving the minimization problem \eqref{Prob-mini} with $|E|_g=m$. In this section, we prove that $\partial^{*} E$ is $C^{1,\frac{1}{2}}$, $E$ is essentially bounded, indecomposable and admits a uniform lower density bound.
\begin{prop}
	Let $E\subset\mathbb{H}^n$ be a minimizer solving the minimization problem \eqref{Prob-mini} with $|E|_g=m$. Then the reduced boundary $\partial^{*}E$ is a $C^{1,\frac{1}{2}}$ manifold and the Hausdorff dimension of the singular set satisfies $\text{dim}_H(\partial E\setminus\partial^{*}E)\leq n-8$.
\end{prop}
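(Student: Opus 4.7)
The plan is to reduce the regularity question to the classical regularity theory for almost-minimizers of the perimeter, via the $(\omega,r_0)$-minimality that was established in Proposition \ref{Prop-Quasi}.

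First, I would invoke Proposition \ref{Prop-Quasi}: since $E$ is a minimizer of $\mathcal{E}(\cdot)$ with $|E|_g=m$, for every fixed $r_0>0$ there exists $\omega>0$ (depending on $n,\alpha,\gamma,m,r_0$) such that $E$ is an $(\omega,r_0)$-minimizer of the perimeter. In particular, for every geodesic ball $B_r(x)$ with $r\leq r_0$ and every competitor $F$ with $E\Delta F\subset\subset B_r(x)$ we have
\begin{equation*}
P(E)\leq P(F)+\omega|E\Delta F|_g\leq P(F)+\omega|B_r(x)|_g,
\end{equation*}
and since $|B_r(x)|_g\leq C(n,r_0)r^{n}$ for $r\leq r_0$, this yields the standard almost-minimality estimate $P(E;B_r(x))\leq P(F;B_r(x))+Cr^{n}$ with the density exponent $n$ (equivalently, an excess decay with H\"older exponent $1/2$).

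Next, since $\mathbb{H}^n$ is smooth and locally quasi-isometric to $\mathbb{R}^n$ (indeed, in geodesic normal coordinates the metric differs from the Euclidean metric by $O(r^2)$ on $B_r(x)$), one can transfer the almost-minimality to a Euclidean almost-minimality on small balls. Then Tamanini's regularity theorem for $(\omega,r_0)$-minimizers of the perimeter (see, e.g., the classical reference or the version in Maggi's book) applies verbatim: the reduced boundary $\partial^{*}E$ is a $C^{1,1/2}$ hypersurface in $\mathbb{H}^n$, with the H\"older exponent $1/2$ coming from the density exponent $n$ in the almost-minimality. The porosity statement in Theorem \ref{Thm-quasim} already gives us, in the meantime, that $\partial E=\partial^{*}E\cup\Sigma$ with $\Sigma:=\partial E\setminus\partial^{*}E$ closed and of vanishing $H^{n-1}$-measure.

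For the singular set, I would apply Federer's classical dimensional reduction argument. Blow-ups of the almost-minimizer $E$ at any point of $\partial E$ converge (up to subsequence) to a minimizing cone in $\mathbb{R}^n$, and Federer's theorem then gives $\dim_H(\Sigma)\leq n-8$ (in particular $\Sigma=\emptyset$ for $2\leq n\leq 7$). This step is not really an obstacle, being a standard consequence of almost-minimality plus the Euclidean theory; the only delicate point is to verify carefully the blow-up analysis in $\mathbb{H}^n$, but it follows from the fact that rescaled metrics converge smoothly to the Euclidean metric. The main conceptual step, therefore, is already contained in Proposition \ref{Prop-Quasi}: once the nonlocal term has been absorbed as a bounded perturbation (via Lemma \ref{Lem-NL}) and the volume constraint has been eliminated by the penalization argument, everything reduces to known local regularity theory for almost-minimizers of the perimeter.
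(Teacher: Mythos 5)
Your proposal is correct and follows essentially the same route as the paper: the paper's proof simply invokes Proposition \ref{Prop-Quasi} to conclude that $E$ is an $(\omega,r_0)$-minimizer of the perimeter and then cites Tamanini's regularity theorem for such almost-minimizers, which yields both the $C^{1,\frac{1}{2}}$ regularity of $\partial^{*}E$ and the bound $\dim_H(\partial E\setminus\partial^{*}E)\leq n-8$. The extra details you supply (the localization of the almost-minimality estimate, the passage to normal coordinates, and Federer's dimension reduction) are exactly the ingredients hidden inside that citation.
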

\begin{proof}
	By Proposition \ref{Prop-Quasi}, we see that $E$ is an $(\omega, r_0)$-minimizer for the perimeter for two positive finite constants $\omega$ and $r_0$. Then we can apply the result of \cite[Thorem 1]{Tamanini82} to get the desired regularity and the Hausdorff dimension of the singular set.
\end{proof}

It is well-known that the $(\omega,r_0)$-minimizer satisfies the upper and lower density estimates. We state it in the following theorem, whose proof is a direct adaptation of \cite[Theorem 6,(i),(iii)]{Caraballo11} and \cite[Theorem 21.11]{Maggi-12} from $\mathbb{R}^n$ to $\mathbb{H}^n$ case due to the local nature.
\begin{thm}\label{Lem-twoside}
	Assume that $F\subset\mathbb{H}^n$ is an $(\omega,r_0)$-minimizer for the perimeter in $\mathbb{H}^n$, then there exists a constant $r'\in(0,r_0)$,  such that for any $0<r\leq r'$ and $x\in\partial F$, there holds
	\begin{equation}\label{Eq-twoside}
		0<c_4\leq\frac{|F\cap B_r(x)|_g}{|B_r(x)|_g}\leq 1-c_4<1
	\end{equation}
	for some constant $c_4\in(0,1)$ independent of $x$. Moreover, both $\mathring{F}^M$ and $(\mathring{F}^c)^M$ are open.
\end{thm}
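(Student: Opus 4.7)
The plan is to adapt the penalty/ODE density proof of \cite[Theorem 21.11]{Maggi-12} to the hyperbolic setting, using the isoperimetric inequality of Remark \ref{Rem-Iso} together with the Euclidean-type lower bound $\xi(z)\ge nb_n^{1/n}z^{(n-1)/n}$ from Remark \ref{Rem-IsoE}. I will work with the representative of $F$ furnished by Theorem \ref{Thm-quasim}, so that $\partial F=\partial^{*}F$. Fix $x\in\partial F$ and put $m(r):=|F\cap B_r(x)|_g$, $u(r):=|B_r(x)\setminus F|_g$; by the hyperbolic coarea formula both are absolutely continuous with $m'(r)=H^{n-1}(F^{(1)}\cap\partial B_r(x))$ and $u'(r)=H^{n-1}(F^{(0)}\cap\partial B_r(x))$ for a.e.\ $r$. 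Comparing $F$ with the competitor $F\setminus B_r(x)$ in the $(\omega,r_0)$-minimality inequality, combined with the standard BV identity $P(F\setminus B_r(x))=P(F;\mathbb{H}^n\setminus\bar B_r(x))+H^{n-1}(F^{(1)}\cap\partial B_r(x))$ valid for a.e.\ $r$, yields
\[
P(F;B_r(x))\le m'(r)+\omega\, m(r),\qquad\text{for a.e. }r\in(0,r_0).
\]

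\textbf{Lower density ODE.} Adding $m'(r)$ to both sides and invoking $P(F\cap B_r(x))=P(F;B_r(x))+m'(r)$ together with the isoperimetric inequality $P(F\cap B_r(x))\ge\xi(m(r))$ produces
\[
nb_n^{1/n}\,m(r)^{(n-1)/n}\le 2m'(r)+\omega\,m(r)^{1/n}\cdot m(r)^{(n-1)/n}.
\]
Since $m(r)\le |B_r(x)|_g$ and Lemma \ref{Lem-Eulik} gives $|B_r(x)|_g\le b_n\cosh^{n-1}(1)\,r^n$ on $(0,1]$, I will choose $r'\in(0,\min\{r_0,1\})$ depending only on $n,\omega$ so that $\omega\, m(r)^{1/n}\le\tfrac12 nb_n^{1/n}$ on $(0,r']$; the $\omega$-term is then absorbed to produce the clean differential inequality
\[
\bigl(m(r)^{1/n}\bigr)'\ge\frac{b_n^{1/n}}{4}\quad\text{for a.e. }r\in(0,r'].
\]
Since $x\in\partial^{*}F$ rules out $x\in(\mathring{F}^c)^M$, we have $m(r)>0$ for every $r>0$, so the comparison principle applied with $m(0)=0$ gives $m(r)\ge b_n(r/4)^n$. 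Combined with the upper bound on $|B_r(x)|_g$, this yields the lower density bound with a uniform constant $c_4=c_4(n,\omega,r_0)\in(0,1)$.

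\textbf{Upper density and openness.} The upper density estimate follows by the mirror argument with competitor $F\cup B_r(x)$ and $u(r)$ playing the role of $m(r)$; this gives $u(r)\ge b_n(r/4)^n$, i.e.\ $m(r)/|B_r(x)|_g\le 1-c_4$ after possibly shrinking $c_4$. For the \emph{moreover} part, the two-sided bound just established forces every point of $\partial F$ to have density strictly between $0$ and $1$, so $\partial F\cap(\mathring{F}^M\cup(\mathring{F}^c)^M)=\emptyset$; since \eqref{Eq-Essenb} identifies $\mathbb{H}^n\setminus(\mathring{F}^M\cup(\mathring{F}^c)^M)$ with $\partial^M F$ and $\partial F=\partial^{*}F\subset\partial^M F$, this yields $\mathring{F}^M=\mathbb{H}^n\setminus\bar F^{\,c}\cap(\mathbb{H}^n\setminus\partial F)$, namely the topological interior of (the good representative of) $F$, which is open; symmetrically for $(\mathring{F}^c)^M$. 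I expect the main obstacle to be the uniform choice of $r'$: one must ensure that both the absorption threshold and the hyperbolic volume factor $\cosh^{n-1}(r)$ can be controlled independently of the base point $x\in\partial F$, which is handled by restricting to $r\le \min\{r_0,1\}$ before performing the comparison argument.
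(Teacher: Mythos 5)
Your proposal is correct and follows essentially the same route as the paper, which itself only cites the standard density-estimate argument for $(\omega,r_0)$-minimizers (\cite[Theorem 21.11]{Maggi-12}, \cite[Theorem 6]{Caraballo11}) and notes that it transfers to $\mathbb{H}^n$ by locality; your write-up supplies exactly that adaptation, with the correct key substitutions ($\xi(z)\geq nb_n^{1/n}z^{(n-1)/n}$ in place of the Euclidean isoperimetric inequality, and $|B_r(x)|_g\leq b_n\cosh^n(1)\,r^n$ for $r\leq 1$ to absorb the $\omega$-term uniformly in $x$). The only cosmetic points are that the normalization $\partial F=\overline{\partial^{*}F}$ is more directly obtained by taking the representative whose topological boundary is the support of $|\nabla\chi_F|$ (rather than via Theorem \ref{Thm-quasim}), and the volume bound comes from $|B_r|_g\leq b_n\sinh^n r$ rather than from Lemma \ref{Lem-Eulik} as stated.
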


%The following lemma says that the upper density estimate implies the measure theoretic interior is an open set.

%\begin{lem}\label{Lem-open}
%	Let $F\subset\mathbb{H}^n$ be a set of finite perimeter, and suppose that for any $x\in\overline{\partial^{*}F}$, 
%	\begin{equation}\label{Eq-open}
	%	0<\delta_1\leq\lim_{r\to 0^{+}}\frac{|F\cap B_r(x)|_g}{|B_r(x)|_g}\leq\delta_2<1
	%\end{equation}
	%uniformly on $\overline{\partial^{*}F}$. Then both $\mathring{F}^M$ and $(\mathring{F}^c)^M$ are open.
%\end{lem}
%\begin{proof}
	%We can apply the argument in \cite[Lemma 2.4]{Fogagnolo22} to both $F$ and $F^c$ to get the desired assertion. Moreover, for the proof under more general assumptions, we refer the readers to \cite[Theorem 6,(i),(iii)]{Caraballo11}.
%\end{proof}
We also present the relationship between $\partial^M F$, $\partial\mathring{F}^M$, $\partial^{*}F$ and $\partial F$ for a set of finite perimeter $F\subset\mathbb{H}^n$.

\begin{lem}\label{Lem-Re-bou}
	Let $F\subset\mathbb{H}^n$ be a set of finite perimeter, then we have
	\begin{equation}\label{Eq-Re-bou}
	\partial^{*} F\subset \partial^M F\subset \partial\mathring{F}^M=\overline{\partial^{*} F}\subset\partial F.
	\end{equation}
	Moreover, if $F$ satisfies the density estimate \eqref{Eq-twoside}, then 
	\begin{equation}\label{Eq-Re-bou1}
		\partial^{*} F\subset \partial^M F=\partial\mathring{F}^M=\overline{\partial^{*} F}\subset\partial F.
	\end{equation}
\end{lem}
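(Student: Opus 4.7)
The plan is to establish the chain $\partial^{*} F\subset \partial^M F\subset \partial\mathring{F}^M=\overline{\partial^{*} F}\subset\partial F$ link by link, using three ingredients already present in the excerpt: Federer's inclusion $\partial^{*}F\subset\partial^M F$, the identity $|F\Delta\mathring{F}^M|_g=0$, and De Giorgi's structure theorem stating that $|\nabla\chi_F|_g$ is concentrated on the reduced boundary.

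For the first two inclusions, the first is Federer's result (stated earlier). For $\partial^M F\subset\partial\mathring{F}^M$, I would fix $x\in\partial^M F$ and an arbitrary radius $r>0$; the strict positivity $\overline{D}(F,x)>0$ supplies some $s<r$ with $|F\cap B_s(x)|_g>0$, and then $|F\Delta\mathring{F}^M|_g=0$ forces $|\mathring{F}^M\cap B_r(x)|_g>0$, so $\mathring{F}^M\cap B_r(x)\neq\emptyset$. A symmetric argument starting from $\overline{D}(F^c,x)>0$ exhibits points of $(\mathring{F}^M)^c$ in $B_r(x)$, placing $x$ on $\partial\mathring{F}^M$. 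The same positivity of upper densities also shows that $x\in\overline{F}\cap\overline{F^c}=\partial F$, which yields an auxiliary inclusion $\partial^M F\subset\partial F$ to be reused below.

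The heart of the argument is the equality $\partial\mathring{F}^M=\overline{\partial^{*}F}$. One direction is automatic: since $\partial^{*}F\subset\partial^M F\subset\partial\mathring{F}^M$ and $\partial\mathring{F}^M$ is closed, $\overline{\partial^{*}F}\subset\partial\mathring{F}^M$. The reverse inclusion is the main technical step, and I would argue it by contradiction. If $x\in\partial\mathring{F}^M$ but some ball $B_r(x)$ is disjoint from $\partial^{*}F$, then De Giorgi's theorem makes $|\nabla\chi_F|_g$ vanish on $B_r(x)$, so $\chi_F$ is a.e.\ constant on the connected open set $B_r(x)$ (connectedness of geodesic balls in $\mathbb{H}^n$ is essential here). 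Either $|F\cap B_r(x)|_g=0$, forcing $D(F,y)=0$ for every $y\in B_r(x)$ and hence $B_r(x)\cap\mathring{F}^M=\emptyset$, or $|F^c\cap B_r(x)|_g=0$ with the symmetric conclusion $B_r(x)\subset\mathring{F}^M$; either outcome contradicts $x\in\partial\mathring{F}^M$. The remaining inclusion $\overline{\partial^{*}F}\subset\partial F$ then follows from the auxiliary fact $\partial^M F\subset\partial F$ (noted above) together with closedness of $\partial F$.

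For the moreover clause under the density estimate \eqref{Eq-twoside}, it suffices to prove $\partial F\subset\partial^M F$, which collapses the three sets $\partial^M F=\partial\mathring{F}^M=\overline{\partial^{*}F}$ into equality. Any $x\in\partial F$ satisfies $c_4\leq|F\cap B_r(x)|_g/|B_r(x)|_g\leq 1-c_4$ for all sufficiently small $r$, which immediately gives both $\overline{D}(F,x)\geq c_4>0$ and $\overline{D}(F^c,x)\geq c_4>0$, so $x\in\partial^M F$. The main obstacle throughout the proof is really the step $\partial\mathring{F}^M\subset\overline{\partial^{*}F}$: everything else reduces to unpacking definitions, whereas that step needs the interplay between the local triviality of $\nabla\chi_F$ and the topology (connectedness) of geodesic balls, which is where the hyperbolic geometry enters at all.
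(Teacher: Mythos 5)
Your proof is correct, and the overall chain-of-inclusions structure matches the paper's; the interesting differences are in how two links are established. For the key identity $\partial\mathring{F}^M=\overline{\partial^{*}F}$ the paper simply cites \cite[Theorem 10]{Caraballo11}, whereas you prove it from scratch: the nontrivial inclusion $\partial\mathring{F}^M\subset\overline{\partial^{*}F}$ via De Giorgi's structure theorem (a ball disjoint from $\partial^{*}F$ carries no variation of $\chi_F$, so $\chi_F$ is a.e.\ constant on that connected ball, whence the ball is either essentially disjoint from or essentially contained in $\mathring{F}^M$). This is the standard argument behind Caraballo's result and is valid verbatim in $\mathbb{H}^n$, so your version is self-contained where the paper outsources. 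For the \emph{moreover} clause the routes genuinely diverge: the paper invokes the openness of $\mathring{F}^M$ and $(\mathring{F}^c)^M$ from Theorem \ref{Lem-twoside} together with \eqref{Eq-Essenb} to conclude that $\partial^M F$ is closed, hence $\overline{\partial^{*}F}\subset\partial^M F$; you instead read off $\overline{D}(F,x)\geq c_4$ and $\overline{D}(F^c,x)\geq c_4$ directly from \eqref{Eq-twoside} at every $x\in\partial F$ to get $\partial F\subset\partial^M F$, which collapses the whole chain and even yields the stronger conclusion $\partial F=\partial^M F$. Your variant has the advantage of using only the stated hypothesis (the density estimate itself) rather than the openness statement, which in Theorem \ref{Lem-twoside} is derived for $(\omega,r_0)$-minimizers; the paper's variant avoids any appeal to the topological boundary $\partial F$ in the hypothesis. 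Both are sound.
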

\begin{proof}
	The proof of $\partial^{*} F\subset \partial^M F$ can be found in \cite[Theorem 3.61]{Ambrosio-Fusco-Pallara00}, and $\partial\mathring{F}^M=\overline{\partial^{*} F}$ can be found in \cite[Theorem 10]{Caraballo11}. By the definition of $\partial^M F$, we see that for $\forall x\in\partial^M F$ and $\forall r>0$, 
	\begin{align*}
		|B_r(x)\cap F|_g&=|B_r(x)\cap\mathring{F}^M|_g>0,\\
		|B_r(x)\cap F^c|_g&=|B_r(x)\cap(\mathring{F}^c)^M|_g>0,
	\end{align*}
	which implies that $x\in\partial\mathring{F}^M$ and hence $\partial^M F\subset \partial\mathring{F}^M$. $\overline{\partial^{*} F}\subset\partial F$ follows from the fact that $\partial^{*} F\subset\partial F$ and $\partial F$ is closed. This completes the proof of \eqref{Eq-Re-bou}.
	
	If $F$ satisfies the density estimate \eqref{Eq-twoside}, then by \eqref{Eq-Essenb} and Theorem \ref{Lem-twoside}, the set $\partial^M F$ is closed. Hence we have $\overline{\partial^{*} F}\subset \partial^M F$, which gives \eqref{Eq-Re-bou1}.
\end{proof}

Then we can prove that a minimizer of the functional $\mathcal{E}(\cdot)$ is essentially bounded and indecomposable.
\begin{thm}\label{Thm-EI}
	Assume that $E\subset\mathbb{H}^n$ solves the minimization problem \eqref{Prob-mini} with $|E|_g=m$. Then $E$ is essentially bounded and indecomposable.
\end{thm}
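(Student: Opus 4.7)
Both assertions are obtained by constructing an explicit competitor that would defeat $E$, and they both rest on the penalized reformulation extracted in the proof of Proposition \ref{Prop-Quasi}: there exists $\Lambda > 0$ such that
\begin{equation*}
\mathcal{E}(E) \;\leq\; \mathcal{E}(F) + \Lambda \bigl||F|_g - m\bigr|
\end{equation*}
for every set of finite perimeter $F \subset \mathbb{H}^n$.

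\emph{Essential boundedness.} Fix any $x_0 \in \mathbb{H}^n$ and define $u(R) := |E \setminus B_R(x_0)|_g$. Then $u$ is nonincreasing, $u(R) \to 0$ as $R \to \infty$, and by the coarea formula $u'(R) = -H^{n-1}(E \cap \partial B_R(x_0))$ for a.e.\ $R$. Argue by contradiction and assume $u(R) > 0$ for every $R > 0$. Testing the penalized inequality above with $F = E \cap B_R(x_0)$ and using the trivial inequality $NL_\alpha(E) \geq NL_\alpha(E \cap B_R(x_0))$ yields $P(E) - P(E \cap B_R(x_0)) \leq \Lambda u(R)$. Combined with the standard identity (valid for a.e.\ $R$)
\begin{equation*}
P(E) - P(E \cap B_R(x_0)) \;=\; P\bigl(E;\,\mathbb{H}^n \setminus \overline{B_R(x_0)}\bigr) \,-\, H^{n-1}(E \cap \partial B_R(x_0))
\end{equation*}
this gives $P(E;\,\mathbb{H}^n \setminus \overline{B_R(x_0)}) \leq \Lambda u(R) - u'(R)$. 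Adding $-u'(R) = H^{n-1}(E\cap \partial B_R(x_0))$ recovers $P(E \setminus B_R(x_0))$, and applying the Euclidean-type isoperimetric inequality \eqref{Eq-HEin} to $E \setminus B_R(x_0)$ leads to the key differential inequality
\begin{equation*}
nb_n^{1/n}\, u(R)^{(n-1)/n} \;\leq\; \Lambda u(R) - 2u'(R).
\end{equation*}
For $R$ large enough, $u(R) \leq (nb_n^{1/n}/(2\Lambda))^n$, so that the linear term is absorbed by half of the isoperimetric term, and one obtains $-u'(R) \geq c\, u(R)^{(n-1)/n}$ for some $c > 0$. Integrating this ODE gives $n\bigl(u(R_0)^{1/n} - u(R)^{1/n}\bigr) \geq c(R - R_0)$ as long as $u > 0$, whose left-hand side is bounded while the right-hand side tends to $\infty$; hence $u(R) = 0$ at some finite $R$, a contradiction.

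\emph{Indecomposability.} Suppose $E = E_1 \sqcup E_2$ with $|E_i|_g > 0$ and $P(E) = P(E_1) + P(E_2)$. Essential boundedness, already proved, makes each $E_i$ essentially compact. Let $\{\psi_t\}_{t > 0}$ be a one-parameter family of hyperbolic translations along a fixed geodesic, so that $d_g(E_1, \psi_t(E_2)) \to \infty$ as $t \to \infty$. Set $\tilde E_t := E_1 \cup \psi_t(E_2)$, which has volume $m$ by isometric invariance of $|\cdot|_g$. For $t$ large the components are disjoint with positive separation, so $P(\tilde E_t) = P(E_1) + P(\psi_t(E_2)) = P(E_1) + P(E_2) = P(E)$. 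Moreover, isometric invariance of $NL_\alpha$ on each piece together with
\begin{equation*}
\int_{E_1}\!\!\int_{\psi_t(E_2)} d_g(x,y)^{-\alpha}\, dV_g(x)\, dV_g(y) \;\leq\; |E_1|_g\, |E_2|_g\, d_g(E_1, \psi_t(E_2))^{-\alpha} \;\longrightarrow\; 0
\end{equation*}
shows that the (vanishing) cross term in $NL_\alpha(\tilde E_t)$ eventually falls below the strictly positive cross term $2\int_{E_1}\!\int_{E_2} d_g^{-\alpha}\, dV_g\, dV_g$ present in $NL_\alpha(E)$, giving $\mathcal{E}(\tilde E_t) < \mathcal{E}(E)$ for $t$ large and contradicting minimality.

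The delicate step is the ODE argument for essential boundedness: the global (rather than local) nature of the penalized inequality leaves an unwelcome linear correction $\Lambda u(R)$, and one must exploit the superlinearity of $u^{(n-1)/n}/u$ at $0$ together with the decay $u(R) \to 0$ to absorb it. The indecomposability part, by contrast, is essentially immediate once boundedness is in hand, as the hyperbolic isometry group affords plenty of room to push components arbitrarily far apart while preserving both volume and perimeter and strictly decreasing the nonlocal energy.
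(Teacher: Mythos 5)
Your proof is correct, and the indecomposability half coincides with the paper's argument (split, translate one piece to infinity by hyperbolic isometries, lose the strictly positive cross term in $NL_\alpha$). The essential boundedness half, however, takes a genuinely different route. The paper derives boundedness from the uniform lower density estimate at points of $\partial E$ (Theorem \ref{Lem-twoside}, a consequence of the $(\omega,r_0)$-minimality of Proposition \ref{Prop-Quasi}) together with the measure-theoretic boundary identities of Lemma \ref{Lem-Re-bou}: an unbounded essential closure would produce infinitely many disjoint balls each carrying a fixed amount of mass, contradicting $|E|_g=m<\infty$. You instead use the \emph{global} penalized inequality (which the paper does establish in the proof of Proposition \ref{Prop-Quasi}, so there is no circularity), test it with the truncation $E\cap B_R(x_0)$, and run the classical slicing/ODE argument: the identity $P(E\setminus B_R)=P(E;\mathbb{H}^n\setminus\overline{B_R})+H^{n-1}(\mathring{E}^M\cap\partial B_R)$ for a.e.\ $R$, the hyperbolic Euclidean-type isoperimetric inequality \eqref{Eq-HEin}, and absorption of the linear term $\Lambda u(R)$ using $u(R)\to 0$ yield $-u'\geq c\,u^{(n-1)/n}$, which forces $u$ to vanish in finite radius. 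Your route avoids the regularity/density machinery entirely (no porosity, no identification of $\partial^M E$ with $\overline{\partial^*E}$) at the price of the slicing identities and the absorption step; the paper's route is shorter once the density estimates are in place and reuses them elsewhere. One cosmetic point: $u'(R)=-H^{n-1}(\mathring{E}^M\cap\partial B_R(x_0))$ should be stated for the density-one representative, and you should note at the end that $u(R)=0$ implies $\overline{D}(E,x)=0$ for every $x\notin\overline{B_R(x_0)}$, so that $\bar E^M$ is indeed bounded.
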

\begin{proof}
	Firstly, we prove that $E$ is essentially bounded. By Proposition \ref{Prop-Quasi} and Theorem \ref{Lem-twoside}, we find that there exists $r>0$ such that for $\forall x\in\partial E$, we have 
	\begin{equation}\label{Eq-ess1}
	|E\cap B_r(x)|_g\geq c_4 |B_r(x)|_g:=c_4 |B_r|_g
	\end{equation}
	for some constant $c_4$ independent of $x$. Then we argue by contradiction. Assume that there exists a sequence of points $\{x_n\}\subset\bar{E}^M$, such that $d_g(x_n,O)\to\infty$ as $n\to\infty$, where $O\in\mathbb{H}^n$ is a fixed point. Without loss of generality, we assume that $d_g(x_n,x_m)>4r$ and hence $B_r(x_n)\cap B_r(x_m)=\emptyset$ for $n\neq m$. By the definition of $\partial^M E$, we see that for any $n$, 
	\begin{equation}\label{Eq-ess2}
	  	|B_r(x_n)\cap \bar{E}^M|_g>0,\quad \text{and}\quad	|B_r(x_n)\setminus \bar{E}^M|_g>0.
	\end{equation}
	This implies that there exists $y_n\in\partial^M E\cap B_r(x_n)$ for any $n$, since otherwise by \eqref{Eq-Essenb}, we have
	\begin{equation*}
		B_r(x_n)=\left(B_r(x_n)\cap\mathring{E}^M\right)\cup \left(B_r(x_n)\cap(\mathring{E}^c)^M\right),
	\end{equation*}
	which says that the geodesic ball $B_r(x_n)$ is the union of two non-empty disjoint open sets by Proposition \ref{Prop-Quasi} and Theorem \ref{Lem-twoside}, contradicting the connectedness of $B_r(x_n)$. By Lemma \ref{Lem-Re-bou}, we have $\partial^M E=\partial \mathring{E}^M=\overline{\partial^{*}E}\subset\partial E$, hence the points $\{y_n\}$ satisfy the lower density bound \eqref{Eq-ess1}, and then we have
	\begin{equation*}
		m=|E|_g\geq\sum_{n=1}^{\infty}|E\cap B_r(y_n)|_g\geq c_4\sum_{n=1}^{\infty}|B_r|_g=\infty,
	\end{equation*}
	which is a contradiction and the essentially boundedness of $E$ is proved.
	
	Next we prove that $E$ is indecomposable. We assume by contradiction that if $E$ is decomposable, i.e. there exists a partition $(E_1,E_2)$ of  $E$ with both $|E_1|_g>0$ and $|E_2|_g>0$, such that $P(E)=P(E_1)+P(E_2)$. Take $\{\tau_h\}$ be a family of isometries in $\mathbb{H}^n$, which satisfies 
  \begin{equation}
  	d_g(E_1,\tau_h E_2)\to\infty, \quad \text{as} \quad h\to\infty.
  	\end{equation}
  	Since $E$ is essentially bounded, hence $E_1$ and $E_2$ are also essentially bounded. Take $\bar{E}_h=E_1\cup \tau_h E_2$, then for $h$ sufficiently large, we have
  	\begin{align*}
  		P(\bar{E}_h)&=P(E_1)+P(\tau_h E_2)=P(E_1)+P(E_2)=P(E),\\
  		|\bar{E}_h|_g&=|E_1|_g+|\tau_h E_2|_g=|E_1|_g+|E_2|_g=|E|_g,
  	\end{align*}
  	and hence
  	\begin{align*}
  		\liminf_{h\to\infty}{\mathcal{E}}(\bar{E}_h)&=\liminf_{h\to\infty}(P(\bar{E}_h)+\gamma NL_{\alpha}(\bar{E}_h))\\
  		&=P(E)+\gamma NL_{\alpha}(E_1)+\gamma NL_{\alpha}(E_2)\\
  		&<P(E)+\gamma NL_{\alpha}(E_1)+\gamma NL_{\alpha}(E_2)+2\gamma\int_{E_1}\int_{E_2}{\frac{1}{d_{g}(x,y)^{\alpha}}\,dV_g(x)\,dV_g(y)}\\
  		&=\mathcal{E}(E),
  	\end{align*}
  	which implies that there exists some sufficiently large $h_0$, such that
  \begin{equation}
  	|\bar{E}_{h_0}|_g=|E|_g,\quad {\mathcal{E}}(\bar{E}_{h_0})<\mathcal{E}(E).
  \end{equation}
  This contradicts the fact that $E$ is a minimizer of the functonal $\mathcal{E}(\cdot)$ with volume $m$, hence $E$ is indecomposable.
\end{proof}

Let $F\subset\mathbb{H}^n$ be a set of finite perimeter with $|F|_g=m$. Denote $\bar{r}=\bar{r}(n)$ to be radius such that $|B_{\bar{r}}|_g=1$. Then for $R>2\bar{r}$, we construct the set $\widehat{F}_R$ to be the collection of $N\geq 1$ balls of equal size and located at $x_k=\mathrm{e}^{Rk}e_n\,1\leq k\leq n$. Here we use the upper-half space model $U^n$ (cf. \S\ref{Subsec-model}). The number $N$ is chosen to be the smallest integer for which the volume of each ball does not exceed 1 and such that $|\widehat{F}_R|_g=|F|_g=m$. Note that since $d_g(x_k,x_{k+1})=R$ by \eqref{Ex-metric}, and hence the $N$ balls in $\widehat{F}_R$ are disjoint from each other. 
\begin{lem}\label{Lem-ER}
	Let $F\subset\mathbb{H}^n$ be a set of finite perimeter with $|F|_g=m$. Then we have
 \begin{equation}\label{Eq-ER}
	\limsup_{R\to\infty}\mathcal{E}({\widehat{F}_R})\leq c_5\max\{m,m^{\frac{n-1}{n}}\},
\end{equation} 
where $c_5$ is a constant depending on $n, \alpha$ and $\gamma$.
\end{lem}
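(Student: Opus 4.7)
The plan is to estimate the two terms of $\mathcal{E}(\widehat{F}_R) = P(\widehat{F}_R) + \gamma NL_\alpha(\widehat{F}_R)$ separately, noting that each constituent ball has volume at most $1$ (hence radius bounded by $\bar{r}$), and that the $N$ balls move apart as $R\to\infty$.

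First I would set $v := m/N$ and let $r$ denote the common radius of the constituent balls, so $|B_r|_g = v \leq 1$ and therefore $r \leq \bar{r}(n)$. By construction $N = \lceil m \rceil$, so $N \leq \max\{1,2m\}$. Applying the Euclidean-like isoperimetric estimate of Lemma \ref{Lem-Eulik} with $r_0 = \bar{r}$ gives
\begin{equation*}
P(\widehat{F}_R) = N\,P(B_r) \leq N\cdot n b_n^{\frac{1}{n}}(\cosh \bar{r})^{\frac{n-1}{n}} v^{\frac{n-1}{n}} = n b_n^{\frac{1}{n}}(\cosh \bar{r})^{\frac{n-1}{n}}\, N^{\frac{1}{n}} m^{\frac{n-1}{n}}.
\end{equation*}
Separating the cases $m\leq 1$ (where $N=1$) and $m > 1$ (where $N^{1/n} \leq (2m)^{1/n}$), the right-hand side is bounded above by a constant depending only on $n$ times $\max\{m, m^{(n-1)/n}\}$. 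Crucially this bound is independent of $R$.

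Next I would treat the nonlocal term by splitting
\begin{equation*}
NL_\alpha(\widehat{F}_R) = \sum_{k=1}^N \int_{B_k}\int_{B_k} \frac{dV_g(x)\,dV_g(y)}{d_g(x,y)^\alpha} + \sum_{k\neq l}\int_{B_k}\int_{B_l} \frac{dV_g(x)\,dV_g(y)}{d_g(x,y)^\alpha},
\end{equation*}
where $B_k$ is the constituent ball centered at $x_k$. Because each $B_k$ has volume $v\leq 1$, Lemma \ref{Lem-Bound} applied with $\bar{m}=1$ gives $v_{B_k,\alpha} \leq c_3(n,1,\alpha)$ on all of $\mathbb{H}^n$, so the diagonal sum is bounded by $N\,c_3\, v = c_3\, m$, again independently of $R$. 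For the off-diagonal terms, for $k\neq l$ and any $x \in B_k$, $y\in B_l$, the triangle inequality with $d_g(x_k,x_l) = R|k-l|$ (this identity follows from the formula \eqref{Ex-metric} applied to the points $\mathrm{e}^{Rk}e_n$ and $\mathrm{e}^{Rl}e_n$) gives $d_g(x,y) \geq R|k-l| - 2\bar{r}$, which tends to infinity with $R$. Since there are only finitely many such cross terms, dominated convergence yields
\begin{equation*}
\lim_{R\to\infty} \sum_{k\neq l}\int_{B_k}\int_{B_l} \frac{dV_g(x)\,dV_g(y)}{d_g(x,y)^\alpha} = 0.
\end{equation*}

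Putting the two estimates together, $\limsup_{R\to\infty} \mathcal{E}(\widehat{F}_R) \leq C(n,\bar{r})\max\{m,m^{(n-1)/n}\} + \gamma c_3\, m$, which is of the desired form with a suitable constant $c_5 = c_5(n,\alpha,\gamma)$. The argument has no real obstacle; the only point requiring a modicum of care is the $N$-dependence of the perimeter bound, which is handled by the observation $N \leq \max\{1,2m\}$ so that $N^{1/n} m^{(n-1)/n}$ is absorbed into $\max\{m, m^{(n-1)/n}\}$.
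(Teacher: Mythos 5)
Your proposal is correct and follows essentially the same route as the paper: the perimeter is controlled via Lemma \ref{Lem-Eulik} together with the bound $N\leq\max\{1,2m\}$, and the nonlocal term is split into diagonal pieces (bounded by $c_3 m$ via Lemma \ref{Lem-Bound}) and off-diagonal pieces that vanish as $R\to\infty$ since the centers satisfy $d_g(x_k,x_l)=R|k-l|$. The only (harmless) difference is that you keep the sharper factor $N^{1/n}m^{(n-1)/n}$ in the perimeter estimate, whereas the paper crudely bounds $P(B_r)\leq P(B_{\bar r})$ for $m>1$.
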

\begin{proof}
	Denote the geodesic ball centered at $x_k$ as $B^k$ respectively, then for $1\leq i\neq j\leq N$,
	\begin{equation}
		d_g(B^i, B^j)\to\infty, \quad \text{as $R\to\infty$}.
	\end{equation}
	We firstly estimate the perimeter term $P(\widehat{F}_R)$. 
	
	\noindent(i) If $m\leq 1$, then $\widehat{F}_R$ is precisely a geodesic ball $B_r$ with radius $r\in(0,\bar{r}]$. Then by Lemma \ref{Lem-Eulik}, we have 
	\begin{equation}\label{eq-ER1}
		P(\widehat{F}_R)=P(B_r)\leq n b_n^{\frac{1}{n}}(\cosh\bar{r})^{\frac{n-1}{n}}m^{\frac{n-1}{n}}.
	\end{equation}
   (ii) If $m>1$, then we have $P(\widehat{F}_R)\leq N P(B_{\bar{r}})$,
   where $N\leq m+1=\frac{m+1}{m}\cdot m\leq 2m$ by the definition of $N$. Hence
   \begin{equation}\label{eq-ER2}
   	P(\widehat{F}_R)\leq 2P(B_{\bar{r}}) m\leq 2n b_n^{\frac{1}{n}}(\cosh\bar{r})^{\frac{n-1}{n}} m.
   	\end{equation}
   	Combining \eqref{eq-ER1} with \eqref{eq-ER2} gives
   	\begin{equation}\label{eq-ER3}
   		P(\widehat{F}_R)\leq  2n b_n^{\frac{1}{n}}(\cosh\bar{r})^{\frac{n-1}{n}}\max\{m,m^{\frac{n-1}{n}}\}.
   	\end{equation} 
   	Next we consider the term $NL_{\alpha}(\widehat{F}_R)$. Note that
   	\begin{align*}
   		NL_{\alpha}(\widehat{F}_R)&=\int_{\widehat{F}_R}\int_{\widehat{F}_R}\frac{1}{d_g(x,y)^{\alpha}}\,dV_g(x)\,dV_g(y)\\
   		&=\sum_{i,j=1}^N\int_{B^i}\int_{B^j}\frac{1}{d_g(x,y)^{\alpha}}\,dV_g(x)\,dV_g(y).
   \end{align*}
   Hence 
   \begin{equation}\label{eq-ER4}
   \limsup_{R\to\infty}NL_{\alpha}(\widehat{F}_R)=\sum_{i=1}^N\int_{B^i}\int_{B^i}\frac{1}{d_g(x,y)^{\alpha}}\,dV_g(x)\,dV_g(y).
   \end{equation}
  Note that the right hand side of \eqref{eq-ER4} keeps invariant regardless of where the center $x_k$ is, so we omit it in the expression as $R\to\infty$. Recall Lemma \ref{Lem-Bound}, we have $	v_{B^i,\alpha}(x)\leq c_3$ for some $c_3$ depending only on $n$ and $\alpha$ since $|B^i|_g\leq 1$. It gives
  %\begin{align}
  %	v_{B^i,\alpha}(x)&\leq \int_{B^i(x)}\frac{1}{d_g(x,y)^{\alpha}}\,d{V_g(y)}\notag\\
  %	&\leq  \int_{B_{\bar{r}}(x)}\frac{1}{d_g(x,y)^{\alpha}}\,d{V_g(y)}\notag\\
  %	&=\int_0^{\bar{r}}\int_{\mathbb{S}^{n-1}}\frac{\sinh^{n-1}r}{r^{\alpha}}\,dv\,dr\notag\\
  %\end{align}
  \begin{equation}\label{eqER6}
  	\limsup_{R\to\infty}NL_{\alpha}(\widehat{F}_R)\leq \sum_{i=1}^N\int_{B^i}c_3\,dV_g(x)=c_3|B^1|_g N.
  \end{equation}
 As in the estimate of the perimeter term, we also consider two cases:
 
 \noindent(i) If $m\leq 1$, then $|B^1|_g=m$ and $N=1$, then we have
 \begin{equation}\label{eqER7}
 	\limsup_{R\to\infty}NL_{\alpha}(\widehat{F}_R)\leq c_3m.
 \end{equation}
 (ii) If $m>1$, then $|B^1|_g\leq |B_{\bar{r}}|_g=1$ and $N\leq 2m$, which gives
 \begin{equation}\label{eqER8}
 	\limsup_{R\to\infty}NL_{\alpha}(\widehat{F}_R)\leq 2c_3m.
 \end{equation}
 Combining \eqref{eq-ER3} with \eqref{eqER7} and \eqref{eqER8} proves Lemma \ref{Lem-ER} with the constant $c_5=2n b_n^{\frac{1}{n}}(\cosh\bar{r})^{\frac{n-1}{n}}+2\gamma c_3$.
\end{proof}

Our next two lemmas are adaptations of \cite[Lemma 4.2, Lemma 4.3]{Knupfer-Muratov14}. The first is a general criterion on a set of finite perimeter being not a minimizer.
\begin{lem}[Nonoptimality Criterion]\label{Lem-Cri}
	Let $F\subset\mathbb{H}^n$ be a set of finite perimeter with $|F|_g=m$, suppose that there exist two disjoint sets of finite perimeter $F_1, F_2$ with positive measures such that $F=F_1\cup F_2$ and
	\begin{equation}\label{Lem-Cri1}
		\Sigma:=P(F_1)+P(F_2)-P(F)\leq\frac{1}{2}\mathcal{E}(F_2).
	\end{equation}
	Then there is an $\varepsilon>0$ depending only $n,\alpha$ and $\gamma$, such that if
	\begin{equation}\label{Lem-Cri2}
		|F_2|_g\leq\varepsilon\min\{1,|F_1|_g\},
	\end{equation}
there exists a set $G\subset\mathbb{R}^n$ such that $|G|_g=|F|_g$ and $\mathcal{E}(G)<\mathcal{E}(F)$. 
\end{lem}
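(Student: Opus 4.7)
The plan is to build a better competitor $G$ of the same mass by using the upper-half-space transformation $\Phi_\lambda$ of \S\ref{Subsec-model} to re-scale $F_1$ back to volume $m$, and to fall back on the spread-out configuration $\widehat{F}_R$ of Lemma \ref{Lem-ER} whenever the energy of $F$ is already too large. I set $\lambda := (|F_1|_g/m)^{1/(n-1)} \in (0,1)$ and $G := \Phi_\lambda(F_1)$, so that \eqref{Re-volume} gives $|G|_g = m$. Since $\lambda < 1$, Proposition \ref{Prop-Lip} and Proposition \ref{Prop-VP}, together with the change-of-variables identity $dV_g(\Phi_\lambda(x)) = \lambda^{1-n} dV_g(x)$ applied twice inside $NL_\alpha$, both produce a factor $\lambda^{2-2n}$ and yield
\begin{equation*}
\mathcal{E}(G) \;\leq\; \lambda^{2-2n}\mathcal{E}(F_1) \;=\; \left(m/|F_1|_g\right)^2 \mathcal{E}(F_1).
\end{equation*}
Provided $\varepsilon \leq 1/2$, one has $|F_2|_g/|F_1|_g \leq 1/2$ and the elementary inequality $(1+t)^2 - 1 \leq 3t$ on $[0,1/2]$ gives $\mathcal{E}(G) - \mathcal{E}(F_1) \leq 3 (|F_2|_g/|F_1|_g)\,\mathcal{E}(F_1)$.

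On the side of $F$, the hypothesis $\Sigma \leq \tfrac12 \mathcal{E}(F_2)$ combined with the nonnegativity of the cross-interaction integral $\int_{F_1}\int_{F_2} d_g(x,y)^{-\alpha}\,dV_g(x)\,dV_g(y)$ gives
\begin{equation*}
\mathcal{E}(F) - \mathcal{E}(F_1) = (P(F_2) - \Sigma) + \gamma\bigl(NL_\alpha(F) - NL_\alpha(F_1)\bigr) \geq \mathcal{E}(F_2) - \Sigma \geq \tfrac12 \mathcal{E}(F_2).
\end{equation*}
Subtracting, the desired $\mathcal{E}(G) < \mathcal{E}(F)$ reduces to $\mathcal{E}(F_2) > 6(|F_2|_g/|F_1|_g)\,\mathcal{E}(F_1)$. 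Using the Euclidean-type isoperimetric bound \eqref{Eq-HEin} to write $\mathcal{E}(F_2) \geq P(F_2) \geq n b_n^{1/n}|F_2|_g^{(n-1)/n}$, it therefore suffices to check $|F_2|_g^{1/n} < c(n)\,|F_1|_g/\mathcal{E}(F_1)$.

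The main obstacle is to control $\mathcal{E}(F_1)/|F_1|_g$ by something depending only on $n, \alpha, \gamma, m$; I would resolve this with a dichotomy based on Lemma \ref{Lem-ER}. Set $M(m) := c_5 \max\{m, m^{(n-1)/n}\}$ and fix $R$ large enough that $\mathcal{E}(\widehat{F}_R) \leq 2M(m)$, which is possible by \eqref{Eq-ER}. If $\mathcal{E}(F) > 2M(m)$, then taking $G := \widehat{F}_R$ already gives $|G|_g = m$ and $\mathcal{E}(G) < \mathcal{E}(F)$, so we are done. Otherwise $\mathcal{E}(F) \leq 2M(m)$; combining $P(F_2) \leq P(F) + \Sigma$ with $NL_\alpha(F_2) \leq NL_\alpha(F)$ yields $\mathcal{E}(F_2) \leq 3\mathcal{E}(F)$, whence $\Sigma \leq \tfrac32\mathcal{E}(F)$ and $\mathcal{E}(F_1) \leq \tfrac52 \mathcal{E}(F) \leq 5 M(m)$. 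Since $|F_2|_g \leq \varepsilon |F_1|_g$ with $\varepsilon \leq 1$ forces $|F_1|_g \geq m/2$, the ratio $\mathcal{E}(F_1)/|F_1|_g$ is bounded by $10 c_5 \max\{1, m^{-1/n}\}$, so the required inequality becomes $|F_2|_g < C(n, \alpha, \gamma) \min\{1, m\}$. Because $|F_1|_g \leq m$ implies $\min\{1, |F_1|_g\} \leq \min\{1, m\}$, the hypothesis $|F_2|_g \leq \varepsilon \min\{1, |F_1|_g\}$ delivers exactly this once $\varepsilon$ is chosen no larger than this $C(n,\alpha,\gamma)$, completing the plan.
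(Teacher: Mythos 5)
Your proposal is correct and follows essentially the same route as the paper's proof: the same competitor $G=\Phi_{\lambda}(F_1)$ with $\lambda^{1-n}=m/|F_1|_g$, the same fallback to $\widehat{F}_R$ via Lemma \ref{Lem-ER} when $\mathcal{E}(F)$ is large, the lower bound $\mathcal{E}(F)\geq\mathcal{E}(F_1)+\tfrac12\mathcal{E}(F_2)$ from \eqref{Lem-Cri1}, and the Euclidean-type isoperimetric bound $\mathcal{E}(F_2)\gtrsim |F_2|_g^{(n-1)/n}$ to close the argument. The only deviations are bookkeeping (e.g.\ $\mathcal{E}(F_1)\leq\tfrac52\mathcal{E}(F)$ instead of the paper's direct $\mathcal{E}(F_1)<\mathcal{E}(F)$), and the final strictness is safe because $\gamma NL_{\alpha}(F_2)>0$ makes $\mathcal{E}(F_2)>P(F_2)$ strict.
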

\begin{proof}
 Let $m_1=|F_1|_g$ and $m_2=|F_2|_g$. Using the upper-half space model $U^n$ and let $\Phi_{\lambda}$ be the transformation defined in \eqref{Defn-Phi}, which maps $F_1$ to the set $\widetilde{F}$. We choose $\lambda$ such that $|\widetilde{F}|_g=|F|_g=m$, then by \eqref{Re-volume}, we deduce that $\lambda^{1-n}=1+\frac{m_2}{m_1}$. Let $\beta:=\frac{m_2}{m_1}\leq\varepsilon$ by \eqref{Lem-Cri2}. Then we have 
 \begin{equation}\label{Eq-Cri1}
 	\lambda=(1+\beta)^{-\frac{1}{n-1}}<1.
 \end{equation}
  We will compare $\mathcal{E}(F)$ with $\mathcal{E}(\widetilde{F})$ and $\mathcal{E}({\widehat{F}_R})$ to get suitable choice of $G$. Firstly, if $\limsup_{R\to\infty}\mathcal{E}({\widehat{F}_R})<\mathcal{E}(F)$, then there exists $R_0$ sufficiently large, such that $G:=\widehat{F}_{R_0}$ satisfies the requirements of this lemma. Otherwise, we assume that 
  \begin{equation}\label{Eq-Cri2}
  	\mathcal{E}(F)\leq \limsup_{R\to\infty}\mathcal{E}({\widehat{F}_R})\leq c_5\max\{m,m^{\frac{n-1}{n}}\}.
  \end{equation}
  Then it remains to show that there is a sufficiently small $\varepsilon>0$ depending on $n,\alpha$ and $\gamma$, such that if the condition \eqref{Lem-Cri2} holds, then $\mathcal{E}(\widetilde{F})<\mathcal{E}(F)$. By \eqref{Es-dis}, \eqref{Re-perimeter} and \eqref{Eq-Cri1}, we see that 
  \begin{align}
  	P(\widetilde{F})&\leq\lambda^{2-2n}P(F_1),\label{Eq-Cri3}\\
  	NL_{\alpha}(\widetilde{F})&=\int_{\widetilde{F}}\int_{\widetilde{F}}\frac{1}{(x_n')^n}\frac{1}{(y_n')^n}\frac{1}{d_g(x',y')^{\alpha}}\,dx'\,dy'\notag\\
  	&\leq {\lambda}^{2-2n} NL_{\alpha}(F_1).\label{Eq-Cri4}
  \end{align} 
  Combining \eqref{Eq-Cri3} with \eqref{Eq-Cri4} gives
  \begin{align}
  	\mathcal{E}(\widetilde{F})&=P(\widetilde{F})+\gamma NL_{\alpha}(\widetilde{F})\notag\\
  	&\leq {\lambda}^{2-2n} \mathcal{E}(F_1)\notag\\
  	&=\mathcal{E}(F_1)+({\lambda}^{2-2n}-1)\mathcal{E}(F_1).\label{Eq-Cri5}
  \end{align}
  If we choose $\varepsilon\leq 1$, then 
  \begin{equation*}
  	    1\leq\lambda^{-1}=(1+\beta)^{\frac{1}{n-1}}\leq (1+\varepsilon)^{\frac{1}{n-1}}\leq 2^{\frac{1}{n-1}}.
  \end{equation*}
  Hence, there exists a constant $K$ depending on $n$ such that 
 \begin{equation}\label{Eq-Cri6}
 	\lambda^{2-2n}-1\leq K(\lambda^{-1}-1)=K[(1+\beta)^{\frac{1}{n-1}}-1]\leq K\beta,
 \end{equation}
 where the first inequality is due to Langrange's mean value theorem. Combining \eqref{Eq-Cri5} with \eqref{Eq-Cri6} gives
 \begin{equation}\label{Eq-Cri7}
 	\mathcal{E}(\widetilde{F})\leq (1+K\beta)\mathcal{E}(F_1).
 \end{equation}
 By assumption \eqref{Lem-Cri1}, we have 
 \begin{align}
 	&\mathcal{E}(F)=P(F)+\gamma NL_{\alpha}(F)\notag\\
 	>&P(F)+\gamma NL_{\alpha}(F_1)+\gamma NL_{\alpha}(F_2)\notag\\
 	\geq&P(F_1)+P(F_2)-\frac{1}{2}\mathcal{E}(F_2)+\gamma NL_{\alpha}(F_1)+\gamma NL_{\alpha}(F_2)\notag\\
 	=&\mathcal{E}(F_1)+\frac{1}{2}\mathcal{E}(F_2).\label{Eq-Cri8}
 \end{align}
 From \eqref{Eq-Cri7} and \eqref{Eq-Cri8}, we get
 \begin{equation}\label{Eq-Cri9}
 		\mathcal{E}(\widetilde{F})-\mathcal{E}(F)\leq -\frac{1}{2}\mathcal{E}(F_2)+K\beta\mathcal{E}(F_1).
 \end{equation}
 By \eqref{Eq-HEin}, we have
 \begin{equation}\label{Eq-Cri10}
 	\mathcal{E}(F_2)>P(F_2)\geq n b_n^{\frac{1}{n}}m_2^{\frac{n-1}{n}}>0.
 \end{equation}
 Also, by \eqref{Eq-Cri2} and \eqref{Eq-Cri8},
 \begin{equation}\label{Eq-Cri11}
 	\mathcal{E}(F_1)<\mathcal{E}(F)\leq c_5\max\{m,m^{\frac{n-1}{n}}\}.
 \end{equation}
 Combining \eqref{Eq-Cri9}-\eqref{Eq-Cri11}, we arrive at
 \begin{align}
 	\mathcal{E}(\widetilde{F})-\mathcal{E}(F)&\leq -\frac{1}{2}n b_n^{\frac{1}{n}}m_2^{\frac{n-1}{n}}+K\beta c_5\max\{m,m^{\frac{n-1}{n}}\}\notag\\
 	&:=-d_1(n)m_2^{\frac{n-1}{n}}+d_2(n,\alpha,\gamma)\beta\max\{m,m^{\frac{n-1}{n}}\}.\label{Eq-Cri12}
 \end{align}
 Since by \eqref{Lem-Cri2}, $\beta,m_2\leq\varepsilon$, then 
 \begin{equation*}
 	\beta m=\frac{m_2}{m_1}(m_1+m_2)=(1+\frac{m_2}{m_1})m_2\leq (1+\varepsilon)m_2\leq 2m_2.
 \end{equation*}
 Then \eqref{Eq-Cri12} now becomes
 \begin{align}
 	\mathcal{E}(\widetilde{F})-\mathcal{E}(F)&\leq -d_1(n)m_2^{\frac{n-1}{n}}+d_2(n,\alpha,\gamma)\max\{2m_2,{\varepsilon}^{\frac{1}{n}}(2m_2)^{\frac{n-1}{n}}\}\notag\\
 	&=-d_1(n)m_2^{\frac{n-1}{n}}+2^{\frac{n-1}{n}}d_2(n,\alpha,\gamma)m_2^{\frac{n-1}{n}}\max\{(2m_2)^{\frac{1}{n}},\varepsilon^{\frac{1}{n}}\}\notag\\
 	&=-d_1(n)m_2^{\frac{n-1}{n}}+2d_2(n,\alpha,\gamma)\varepsilon^{\frac{1}{n}}m_2^{\frac{n-1}{n}}\notag\\
 	&=(-d_1(n)+2d_2(n,\alpha,\gamma)\varepsilon^{\frac{1}{n}})m_2^{\frac{n-1}{n}}.
 \end{align}
If we take 
\begin{equation}\label{Value-e}
	\varepsilon=\min\left\{1,\left[\frac{d_1(n)}{4d_2(n,\alpha,\gamma)}\right]^n\right\},
\end{equation}
then we have $\mathcal{E}(\widetilde{F})<\mathcal{E}(F)$.
\end{proof}

In the end of this section, we improve the lower density result of Theorem \ref{Lem-twoside} to a bound which independent of the choice of set.
\begin{lem}[Uniform lower density bound]\label{Lem-ULB}
	Let $E\subset\mathbb{H}^n$ be a set of finite perimeter, which solves the minimization problem \eqref{Prob-mini} with $|E|_g=m$. Then for every $x\in\bar{E}^M$, we have for some constant $c_6$ depending on $n,\alpha$ and $\gamma$, there holds
	\begin{equation}\label{Lem-ULB!}
		|E\cap B_1(x)|_g\geq c_6\min\{1,m\}.
	\end{equation}
\end{lem}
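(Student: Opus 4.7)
The plan is to argue by contradiction using the Nonoptimality Criterion (Lemma~\ref{Lem-Cri}) applied to the decomposition of $E$ induced by a geodesic ball around the point $x$. Suppose, toward a contradiction, that $\eta := |E \cap B_1(x)|_g < c_6 \min\{1, m\}$ for some $x \in \bar{E}^M$, where $c_6 = c_6(n,\alpha,\gamma) > 0$ is to be fixed at the end. Introduce the slicing function $m(r) := |E \cap B_r(x)|_g$ on $[0,1]$, which is absolutely continuous and nondecreasing, with $m(0) = 0$, $m(1) = \eta$, and, by the coarea formula, $m'(r) = H^{n-1}(E \cap \partial B_r(x))$ for a.e.\ $r \in (0,1)$. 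Since $x \in \bar{E}^M$, one has $m(r) > 0$ for every $r > 0$.

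The key step is to produce a good slicing radius $r^*$. Since $m$ is AC and strictly positive on $(0,1]$, $m^{1/n}$ is AC on every $[\varepsilon,1]$, and monotone convergence as $\varepsilon \to 0^+$ gives the integral identity
\begin{equation*}
\int_0^1 \frac{m'(r)}{m(r)^{(n-1)/n}}\,dr \;=\; n\,\eta^{1/n}.
\end{equation*}
By Chebyshev's inequality, the set $T := \{r \in (0,1) : m'(r) \leq 2n\eta^{1/n} m(r)^{(n-1)/n}\}$ has measure at least $1/2$, so we may choose $r^* \in T$ belonging also to the a.e.\ set (via Maggi's Lemma, as used in Proposition~\ref{Prop-REH}) where $P(F_1) + P(F_2) = P(E) + 2 m'(r^*)$ holds for $F_2 := E \cap B_{r^*}(x)$ and $F_1 := E \setminus B_{r^*}(x)$. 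Both sets have positive measure: $|F_2|_g = m(r^*) > 0$ because $r^* > 0$, and $|F_1|_g = m - m(r^*) \geq m - \eta > 0$ because $\eta < m$. Thus $\Sigma = 2 m'(r^*)$, and by Remark~\ref{Rem-IsoE},
\begin{equation*}
\mathcal{E}(F_2) \;\geq\; P(F_2) \;\geq\; n\, b_n^{1/n} m(r^*)^{(n-1)/n}.
\end{equation*}
Combining these gives $\Sigma \leq (4\eta^{1/n}/b_n^{1/n})\,\mathcal{E}(F_2)$, so choosing $c_6 \leq b_n / 8^n$ yields $\Sigma \leq \tfrac{1}{2}\mathcal{E}(F_2)$. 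For the volume hypothesis, note $|F_2|_g \leq \eta$ and $|F_1|_g \geq m - \eta \geq \tfrac{1}{2}\min\{1,m\}$ provided $c_6 \leq 1/2$; requiring also $c_6 \leq \varepsilon/2$ (with $\varepsilon$ as in Lemma~\ref{Lem-Cri}) then forces $|F_2|_g \leq \varepsilon \min\{1, |F_1|_g\}$ in both regimes $m \geq 1$ and $m < 1$. Lemma~\ref{Lem-Cri} now produces $G \subset \mathbb{H}^n$ with $|G|_g = m$ and $\mathcal{E}(G) < \mathcal{E}(E)$, contradicting the minimality of $E$. Setting $c_6 := \min\{b_n/8^n,\,\varepsilon/2\}$, which depends only on $n,\alpha,\gamma$, yields the claimed lower bound.

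The main obstacle is the construction of the slicing radius $r^*$: one needs the integral identity above, whose proof hinges on $m^{1/n}$ being AC on every $[\varepsilon,1]$ and on the limit $\varepsilon \to 0^+$, the latter being handled cleanly by monotone convergence since the integrand is nonnegative. Once $r^*$ is in hand, the remainder is a calibration of the constant $c_6$ against the two thresholds dictated by the two hypotheses of the Nonoptimality Criterion.
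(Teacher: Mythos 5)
Your proof is correct, and it takes a genuinely different route from the paper's. The paper argues directly: it fixes the whole range of radii $r\le r_s=d_3\min\{1,m^{1/n}\}$ on which the volume hypothesis \eqref{Lem-Cri2} of the Nonoptimality Criterion holds automatically, concludes that the energy hypothesis \eqref{Lem-Cri1} must therefore \emph{fail} for every such $r$, converts this failure (together with the relative isoperimetric inequality \eqref{Rem-Eulik1} and a case distinction on the radius $r_t$ where the density drops below $1/2$) into the differential inequality $U'(r)\ge \frac{c_1}{3}U(r)^{\frac{n-1}{n}}$, and integrates. You instead argue by contradiction from the smallness of $\eta=|E\cap B_1(x)|_g$: the identity $\int_0^1 m'(r)\,m(r)^{-\frac{n-1}{n}}\,dr=n\eta^{1/n}$ plus Chebyshev produces a single good slicing radius $r^*$ at which the cut cost $\Sigma=2m'(r^*)$ is small compared with $m(r^*)^{\frac{n-1}{n}}$, and the global Euclidean-like isoperimetric inequality \eqref{Eq-HEin} then verifies \emph{both} hypotheses of Lemma \ref{Lem-Cri} at $r^*$, contradicting minimality. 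Your route avoids the relative isoperimetric inequality of Proposition \ref{Prop-REH} and the two-case ODE analysis entirely, at the price of the slicing/averaging lemma; the paper's route is constructive in the sense that it tracks the growth of $U(r)$ explicitly. All the delicate points in your argument check out: $m$ is absolutely continuous by the coarea formula, $m^{1/n}$ is absolutely continuous on $[\varepsilon,1]$ since $m\ge m(\varepsilon)>0$ there, the identity $P(F_1)+P(F_2)=P(E)+2H^{n-1}(\mathring{E}^M\cap\partial B_{r^*})$ holds for a.e.\ $r^*$ exactly as in \eqref{Eq-ULB5}, and the calibration $c_6=\min\{b_n/8^n,\varepsilon/2\}$ satisfies both thresholds in the regimes $m\ge1$ and $m<1$.
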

\begin{proof}
	For given $r>0$ and $x\in\bar{E}^M$, define the sets $F_1^r=E\setminus B_r(x)$ and $F_2^r=E\cap B_r(x)$.  Hence by $P(B_r)=n b_n\sinh^{n-1} r$ and \eqref{In-Eulik}, we have
	\begin{equation}\label{Eq-ULB1}
		|F_2^r|_g\leq |B_r(x)|_g\leq n^{-\frac{n}{n-1}}b_n^{-\frac{1}{n-1}}P(B_r(x))^{\frac{n}{n-1}}=b_n\sinh^n r.
	\end{equation}
	If we take
	\begin{equation}\label{Eq-ULB2}
		d_3=\min\left\{\mathrm{arccosh}(2^{\frac{1}{n}}),\left[\frac{\varepsilon}{2b_n(1+\varepsilon)}\right]^{\frac{1}{n}}\right\}<1,
	\end{equation}
	where $\varepsilon$ is the constant appeared in Lemma \ref{Lem-Cri} (we can choose $\varepsilon$ as in \eqref{Value-e}). Then condition \eqref{Lem-Cri2} holds for any $r\leq r_s:=d_3\min\{1,m^{\frac{1}{n}}\}$. Indeed, since $r\leq d_3\leq {\mathrm{arccosh}(2^{\frac{1}{n}})}$, we have $\sinh r\leq 2^{\frac{1}{n}} r$ for $r\leq r_s$  by Langrange's mean value theorem, which implies that 
	\begin{equation}\label{Eq-ULB3}
		|F_2^r|_g\leq b_n\sinh^n r\leq 2b_n r^n\leq 2b_n r_s^n=2b_n d_3^n\min\{1,m\}.
	\end{equation}
	If $m\geq 1$, then 
	\begin{align*}
	|F_2^r|_g&\leq 2b_n d_3^n\leq\varepsilon(1-2b_n d_3^n)<\varepsilon,\\
	|F_2^r|_g&\leq 2b_n d_3^n\leq\varepsilon(1-2b_n d_3^n)\leq \varepsilon(m-2b_n d_3^n)\leq \varepsilon|F_1^r|_g,
	\end{align*}
	which satisfies the condition \eqref{Lem-Cri2}.
		
  \noindent{If} $m<1$, then 
	\begin{align*}
	|F_2^r|_g&\leq 2b_n d_3^n m\leq\frac{\varepsilon}{1+\varepsilon}m<\frac{\varepsilon}{1+\varepsilon}<\varepsilon,\\
	|F_2^r|_g&\leq 2b_n d_3^n m\leq\frac{\varepsilon}{1+\varepsilon}m\leq\varepsilon |F_1^r|_g,
	\end{align*}
which also satisfies the condition \eqref{Lem-Cri2}. However, since $E$ is a minimizer, then \eqref{Lem-Cri1} cannot hold for any $r\leq r_s$, i.e., for $\forall r\leq r_s$, we have
\begin{equation}\label{Eq-ULB4}
	\Sigma^r:=P(F_1^r)+P(F_2^r)-P(E)>\frac{1}{2}\mathcal{E}(F_2^r)>\frac{1}{2}P(F_2^r).
\end{equation}
On the other hand, by \cite[Proposition 1]{Ambrosio01} and \cite[Theorem 3.61]{Ambrosio-Fusco-Pallara00}, we have
\begin{equation}\label{Eq-ULB5}
		\Sigma^r=2H^{n-1}(\partial^{*}F_1^r\cap\partial^{*}F_2^r),
\end{equation}
In fact, since all the points belonging to $\partial^{*}F_1^r$ and $\partial^{*}F_2^r$ are supported on $\partial B_r(x)$ and have density $\frac{1}{2}$ by \cite[Theorem 3.61]{Ambrosio-Fusco-Pallara00}, we have $\partial^{*}F_1^r\cap\partial^{*}F_2^r=\mathring{E}^M\cap\partial B_r(x)$. Combining this with \eqref{Eq-ULB4} and \eqref{Eq-ULB5}, we have
\begin{align*}
	2H^{n-1}(\mathring{E}^M\cap\partial B_r(x))>\frac{1}{2}\left[H^{n-1}(\partial^{*}E\cap B_r(x))+H^{n-1}(\mathring{E}^M\cap\partial B_r(x))\right], 
\end{align*}
which gives
\begin{equation}\label{Eq-ULB6}
	H^{n-1}(\mathring{E}^M\cap\partial B_r(x))>\frac{1}{3}H^{n-1}(\partial^{*}E\cap B_r(x)).
\end{equation}
Define
\begin{equation}\label{Eq-ULB7}
	r_t=\sup\left\{0\leq r\leq r_s:|E\cap B_r(x)|_g\geq\frac{1}{2}|B_r(x)|_g\right\},
\end{equation}
and we consider the following two cases.

(i) If $r_t=r_s<1$, then we have 
\begin{equation}\label{Eq-ULB8}
	|E\cap B_1(x)|_g\geq |E\cap B_{r_s}(x)|\geq \frac{1}{2}|B_{r_s}(x)|_g\geq\frac{b_n}{2}r_s^n=\frac{b_n d_3^n}{2}\min\{1,m\}.
\end{equation}

(ii)If $r_t<r_s$, then for $r\in (r_t,r_s]$, we have $|E\cap B_r(x)|_g<\frac{1}{2}|B_r(x)|_g$. Since $r_s<1$, then by the relative isoperimetric inequality \eqref{Rem-Eulik1}, we have
\begin{equation}\label{Eq-ULB9}
	P(E;B_r(x))=H^{n-1}(\partial^{*}E\cap B_r(x))\geq c_1|E\cap B_r(x)|_g^{\frac{n-1}{n}},
\end{equation}
where $c_1$ is some constant depending only on $n$. Denote $U(r)=|E\cap B_r(x)|_g$, then combining the coarea formula with \eqref{Eq-ULB6} and \eqref{Eq-ULB9}, we have for a.e. $r_t<r<r_s$, there holds
\begin{equation}\label{Eq-ULB10}
	\frac{d U(r)}{dr}\geq\frac{c_1}{3} U(r)^{\frac{n-1}{n}}.
\end{equation} 
Since $x\in\bar{E}^M$, we have $U(r)>0$ for all $r>0$ and hence solving the differential inequality \eqref{Eq-ULB10} gives
\begin{align*}
	U^{\frac{1}{n}}(r)&\geq U^{\frac{1}{n}}(r_t)+\frac{c_1}{3n}(r-r_t)\\
	&= \left(\frac{1}{2}|B_{r_t}(x)|_g\right)^{\frac{1}{n}}+\frac{c_1}{3n}(r-r_t)\\
	&\geq \left(\frac{b_n}{2}\right)^{\frac{1}{n}}r_t+\frac{c_1}{3n}(r-r_t)\\
	&\geq\min\left\{\left(\frac{b_n}{2}\right)^{\frac{1}{n}},\frac{c_1}{3n}\right\}r ,\,\,\forall r_t\leq r\leq r_s.
\end{align*}
In particular, we have
\begin{equation}\label{Eq-ULB11}
	|E\cap B_1(x)|_g=U(1)\geq U(r_s)\geq \min\left\{\frac{b_n}{2},\left(\frac{c_1}{3n}\right)^n\right\}d_3^n\min\{1,m\}.
\end{equation}
Then combining \eqref{Eq-ULB8} with \eqref{Eq-ULB11}, we prove this lemma with the constant
\begin{equation*}
	c_6=\min\left\{\frac{b_n}{2},\left(\frac{c_1}{3n}\right)^n\right\}d_3^n.
\end{equation*}
\end{proof}
\section{Existence of minimizers}\label{Sec-Existence}
In this section, we prove that the functional $\mathcal{E}(\cdot)$ admits a minimizer for small volumes. Inspired by \cite[Lemma 4.5]{Figalli15}, we firstly prove the following technical lemma.
\begin{lem}\label{Lem-Re-Pe}
	Let $n\geq 2$, $r_0>0$ and $E\subset\mathbb{H}^n$, if $|E\setminus B_{r_0}|_g\leq\eta$ for some $\eta>0$, then there exists $r_0\leq r_E\leq r_0+C_1(n)\eta^{\frac{1}{n}}$, such that
	\begin{equation}\label{Eq-Re-Pe-1}
		P(E\cap B_{r_E})\leq P(E)-\frac{|E\setminus B_{r_E}|_g}{C_2(n)\eta^{\frac{1}{n}}},
	\end{equation}
	where $C_1(n)=4 b_n^{-\frac{1}{n}}$ and $C_2(n)=2n^{-1}b_n^{-\frac{1}{n}}$.
\end{lem}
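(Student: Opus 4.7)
The plan is to derive the inequality from an averaging/ODE-type argument on the volume defect $u(r):=|E\setminus B_r|_g$. I may assume $P(E)<\infty$, otherwise the claim is vacuous. Since $|E\setminus B_{r_0}|_g\le\eta<\infty$, the function $u:[r_0,\infty)\to[0,\eta]$ is non-increasing and absolutely continuous via the coarea formula, with $u'(r)=-H^{n-1}(E\cap\partial B_r)$ for a.e.\ $r$. For a.e.\ $r$, combining the slicing identity $P(E\cap B_r)=P(E;B_r)+H^{n-1}(E\cap\partial B_r)$ (cf.\ \cite[Lemma 15.12]{Maggi-12}) with $P(E)=P(E;B_r)+P(E;\mathbb{H}^n\setminus\bar{B}_r)$ (valid whenever $H^{n-1}(\partial^*E\cap\partial B_r)=0$, i.e.\ a.e.) and the analogous decomposition of $P(E\setminus B_r)$, one obtains
\begin{equation*}
P(E)-P(E\cap B_r)=P(E\setminus B_r)+2u'(r).
\end{equation*}
Applying the Euclidean-like isoperimetric inequality \eqref{Eq-HEin} to $E\setminus B_r$ then gives
\begin{equation*}
P(E)-P(E\cap B_r)\ge nb_n^{1/n}\,u(r)^{(n-1)/n}+2u'(r).
\end{equation*}
Hence it suffices to exhibit $r_E\in I:=[r_0,r_0+C_1\eta^{1/n}]$ for which
\begin{equation*}
nb_n^{1/n}\,u(r_E)^{(n-1)/n}+2u'(r_E)\;\ge\;\frac{u(r_E)}{C_2\eta^{1/n}}.\tag{$\star$}
\end{equation*}

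The trivial case comes first: if $u(r^*)=0$ for some $r^*\in I$, then $E\cap B_{r^*}$ agrees with $E$ up to a null set, so both sides of \eqref{Eq-Re-Pe-1} reduce to $P(E)$ and one takes $r_E=r^*$. Otherwise $u>0$ on $I$, and I perform the substitution $f(r):=u(r)^{1/n}$, which is absolutely continuous with $0<f(r)\le f(r_0)\le\eta^{1/n}$ and $u'=nf^{n-1}f'$. Plugging this into $(\star)$, using the explicit value $1/C_2=nb_n^{1/n}/2$, and dividing by $nf^{n-1}>0$, the target becomes the simpler one-dimensional inequality
\begin{equation*}
2f'(r_E)\;\ge\;b_n^{1/n}\Big(\frac{f(r_E)}{2\eta^{1/n}}-1\Big).
\end{equation*}
Since $f\le\eta^{1/n}$ on $I$, the right-hand side is pointwise $\le -b_n^{1/n}/2$. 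Arguing by contradiction, if this inequality failed for a.e.\ $r\in I$, then $f'(r)<-b_n^{1/n}/4$ for a.e.\ $r\in I$, and integrating over $I$ (of length $C_1\eta^{1/n}=4b_n^{-1/n}\eta^{1/n}$) yields
\begin{equation*}
f(r_0+C_1\eta^{1/n})-f(r_0)\;<\;-\tfrac{b_n^{1/n}}{4}\cdot 4b_n^{-1/n}\eta^{1/n}=-\eta^{1/n},
\end{equation*}
which forces $f(r_0+C_1\eta^{1/n})<0$, contradicting $f\ge 0$.

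The main obstacles are purely bookkeeping: one needs the absolute continuity of $u$ (hence of $f$) so that the fundamental theorem of calculus applies, and must handle the a.e.\ exceptional radii on which the slicing identity or the condition $H^{n-1}(\partial^*E\cap\partial B_r)=0$ can fail. Both are classical and are compatible with selecting $r_E$ inside a full-measure subset of $I$. The only tuning lies in the explicit constants $C_1=4b_n^{-1/n}$ and $C_2=2n^{-1}b_n^{-1/n}$, which are calibrated precisely so that a strict drop rate $f'<-b_n^{1/n}/4$ sustained over an interval of length $C_1\eta^{1/n}$ would lower $f$ by strictly more than $\eta^{1/n}$, producing the required contradiction.
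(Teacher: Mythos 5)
Your proposal is correct and follows essentially the same route as the paper: the same slicing decomposition $P(E)=P(E\cap B_r)+P(E\setminus B_r)+2u'(r)$, the same use of the Euclidean-like isoperimetric inequality \eqref{Eq-HEin} on $E\setminus B_r$, and the same contradiction via the differential inequality for $u^{1/n}$ integrated over an interval of length $C_1\eta^{1/n}$. The only (harmless) difference is cosmetic — you divide by $nf^{n-1}$ before integrating rather than integrating $u'\leq -\tfrac{nb_n^{1/n}}{4}u^{(n-1)/n}$ directly — and you treat the degenerate case $u(r^*)=0$ explicitly, which the paper dismisses as a reduction.
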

\begin{proof}
	Without loss of generality, we consider a set $E$ with $|E\setminus B_{r_0}|_g\leq\eta$ and $|E\setminus B_{r_0+C_1\eta^{\frac{1}{n}}}|_g>0$. Correspondingly, if we set $u(r)=|E\setminus B_r|_g$ with $r>0$, then $u$ is a decreasing function with 
	\begin{equation}\label{Eq-Re-Pe-2}
		\left\{\begin{aligned}
		&[0,r_0+C_1\eta^{\frac{1}{n}}]\subset \text{spt}\, u,\\
		&u(r_0)\leq\eta,\\
		&u'(r)=-H^{n-1}(\mathring{E}^M\cap\partial B_r)\,\, \text{for a.e.}\,\, r>0.
		\end{aligned}\right.
	\end{equation}
	Arguing by contradiction, we now assume that 
	\begin{equation}\label{Eq-Re-Pe-3}
		P(E)\leq P(E\cap B_{r})+\frac{u(r)}{C_2\eta^{\frac{1}{n}}},\quad \forall r\in[r_0,r_0+C_1\eta^{\frac{1}{n}}].
	\end{equation}
We decompose $E$ as $E=(E\cap B_r)\cup (E\setminus B_r)$, then we have 
\begin{equation}\label{Eq-Re-Pe-4}
	P(E)=P(E\cap B_r)+P(E\setminus B_r)-2H^{n-1}(\mathring{E}^M\cap\partial B_r).
\end{equation}
Combining \eqref{Eq-HEin} (taking $F=E\setminus B_r$) with \eqref{Eq-Re-Pe-2}-\eqref{Eq-Re-Pe-4} and the fact that $u(r)\leq\eta$, we see that for a.e. $r\in[r_0,r_0+C_1\eta^{\frac{1}{n}}]$, there holds
\begin{equation}
  u'(r)\leq-\frac{nb_n^{\frac{1}{n}}}{4}u(r)^{\frac{n-1}{n}},
\end{equation}
which gives
\begin{equation}
	u^{\frac{1}{n}}(r_0+C_1\eta^{\frac{1}{n}})\leq u^{\frac{1}{n}}(r_0)-\frac{b_n^{\frac{1}{n}}}{4}C_1\eta^{\frac{1}{n}}\leq 0,
\end{equation}
where we have used the fact that $u(r_0)\leq\eta$ and the definition of $C_1$ in the last inequality. It contradicts the assumption that $u(r_0+C_1\eta^{\frac{1}{n}})>0$.
\end{proof}

\begin{thm}\label{Lem-Min}
	There exists $m_0>0$ depending on $n,\alpha$ and $\gamma$, such that for every $m\leq m_0$ and every set of finite perimeter $F$ with $|F|_g=|B_r|_g=m$, there exists a set of finite perimeter $G$ and a positive constant $C_3$ depending only on $n$, $\alpha$ and $\gamma$, such that $G$ is contained in a geodesic ball of radius $(1+C_3 r^{\frac{1}{2n}})r$, which satsifies $|G|_g=m$ and $\mathcal{E}(G)\leq\mathcal{E}(F)$. In particular, for $m\leq m_0$, the minimization problem \eqref{Prob-mini} admits a minimizer $E$ with volume $m$ which is contained in a geodesic ball of radius $(1+C_3 r^{\frac{1}{2n}})r$.
\end{thm}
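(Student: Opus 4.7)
The strategy is to construct $G$ from $F$ by an explicit truncation-plus-$\Phi_\lambda$ procedure, preceded by a concentration dichotomy. First dispose of the trivial case: if $\mathcal{E}(F) \ge \mathcal{E}(B_r)$, take $G := B_r$, which lies in a ball of radius $r\le (1+C_3 r^{\frac{1}{2n}})r$. So assume $\mathcal{E}(F) < \mathcal{E}(B_r)$. Then Lemma~\ref{Lem-Eulik} and Lemma~\ref{Lem-Bound} (with $\bar m=m_0$) yield the \emph{a priori} bounds $P(F) \le C_{*} m^{\frac{n-1}{n}}$ and $NL_{\alpha}(F)\le c_3 m$, where $C_{*},c_3$ depend only on $n,\alpha,\gamma,m_0$.

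Fix a threshold $\eta_{*} := C^{*} r^{n+\frac{1}{2}}$ with $C^{*} = C^{*}(n,\alpha,\gamma,m_0)$ to be chosen below. Either (a) $|F\setminus B_r(x_0)|_g \le \eta_{*}$ for some $x_0\in\mathbb{H}^n$, or (b) $|F\setminus B_r(x_0)|_g > \eta_{*}$ for every $x_0$. In case (b), since $|F|_g=|B_r(x_0)|_g$ forces $|F\Delta B_r(x_0)|_g = 2|F\setminus B_r(x_0)|_g$, the Fraenkel asymmetry satisfies $\bm{\upalpha}(F) \ge 2\eta_{*}$. Theorem~\ref{Thm-quan} combined with $\sinh r\le 2r$ for small $r$ gives $P(F)-P(B_r) \ge \frac{c'(n,r_0)(C^{*})^2}{2^{n-1}}\,r^{n}$, whereas $\gamma(NL_{\alpha}(B_r)-NL_{\alpha}(F)) \le \gamma c_3 b_n r^{n}$. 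Choosing $C^{*}$ so that $c'(n,r_0)(C^{*})^2/2^{n-1} > \gamma c_3 b_n$ produces $\mathcal{E}(F) > \mathcal{E}(B_r)$, contradicting our standing assumption. So case (a) must hold.

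In case (a), with $\eta:=|F\setminus B_r(x_0)|_g \le \eta_{*}$, Lemma~\ref{Lem-Re-Pe} supplies $r_E\in[r,\,r+C_1 \eta^{\frac{1}{n}}]$ and $F':=F\cap B_{r_E}(x_0)$ with $P(F')\le P(F)-\eta'/(C_2\eta^{\frac{1}{n}})$, where $\eta':=|F\setminus F'|_g\le\eta$. Using a hyperbolic isometry to place $x_0$ at $e_n$ in the upper half-space model, set $G:=\Phi_\lambda(F')$ with $\lambda:=(|F'|_g/m)^{\frac{1}{n-1}}\le 1$, so $|G|_g=m$ by Proposition~\ref{Prop-VP}. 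The change-of-variables used in Proposition~\ref{Prop-Quasi} gives $P(G) \le \lambda^{2-2n} P(F')$ and $NL_{\alpha}(G)\le \lambda^{2(1-n)} NL_{\alpha}(F')$; together with the elementary expansion $\lambda^{2-2n}-1 \le 6\eta'/m$ (valid for $\eta'\le m/2$) and $NL_{\alpha}(F')\le NL_{\alpha}(F)$, this yields
\[
\mathcal{E}(G)-\mathcal{E}(F) \;\le\; \bigl(6 C_{*} m^{-\frac{1}{n}} + 6\gamma c_3\bigr)\eta' - \frac{\eta'}{C_2\eta^{\frac{1}{n}}}.
\]
Since $\eta^{\frac{1}{n}} \le (C^{*})^{\frac{1}{n}} r^{1+\frac{1}{2n}}$ and $m^{\frac{1}{n}}\approx b_n^{\frac{1}{n}} r$, the truncation gain dominates both positive terms once $m_0$ is small enough (depending only on $n,\alpha,\gamma$), giving $\mathcal{E}(G)\le \mathcal{E}(F)$. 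Proposition~\ref{Prop-Lip} then gives $G\subset B_{\lambda^{-2}r_E}(\Phi_\lambda(x_0))$, and $\lambda^{-2}r_E \le (1+C\eta_{*}/m)(r+C_1\eta_{*}^{\frac{1}{n}}) \le r + C_1(C^{*})^{\frac{1}{n}} r^{1+\frac{1}{2n}} + O(r^{\frac{3}{2}}) \le (1+C_3 r^{\frac{1}{2n}})r$ for a suitable $C_3(n,\alpha,\gamma)$, since $r^{\frac{3}{2}}\le r^{1+\frac{1}{2n}}$ for $r\le 1$ and $n\ge 2$. For the ``in particular'' clause, the construction lets us restrict a minimizing sequence (after hyperbolic isometries) to a single ball of radius $(1+C_3 r^{\frac{1}{2n}})r$; $BV$-compactness in this bounded region, lower semicontinuity of the perimeter, and dominated convergence for $NL_{\alpha}$ (valid since $\alpha<n$) then yield a minimizer contained in the ball.

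\textbf{Main obstacle.} The most delicate point is calibrating $\eta_{*}$ at precisely the scale $r^{n+\frac{1}{2}}$: Theorem~\ref{Thm-quan} loses a factor $\sinh^{-(n+1)}r \approx r^{-(n+1)}$, so in case (b) beating the nonlocal deficit of order $m\approx r^n$ forces $\eta_{*}\gtrsim r^{n+\frac{1}{2}}$; conversely, the truncation-induced radius inflation $C_1\eta_{*}^{\frac{1}{n}}$ must remain below $C_3 r^{1+\frac{1}{2n}}$, forcing $\eta_{*}\lesssim r^{n+\frac{1}{2}}$. This tight matching of two opposing scales is exactly what determines the exponent $\frac{1}{2n}$ appearing in the statement.
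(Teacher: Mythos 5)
Your proposal is correct and follows essentially the same route as the paper's proof: the trivial comparison with $B_r$, the quantitative isoperimetric inequality of Theorem \ref{Thm-quan} to force $|F\setminus B_r(x_0)|_g\lesssim r^{n+\frac12}$ (the paper derives the bound $\bm{\upalpha}(F)\le d_4 r^{n+\frac12}$ directly rather than via your dichotomy, but the content is identical), the truncation Lemma \ref{Lem-Re-Pe}, the volume-restoring map $\Phi_\lambda$ with the same perimeter/nonlocal distortion estimates, and the same balance of the scales $r^{-1-\frac{1}{2n}}$ versus $r^{-1}$, followed by the standard compactness argument for the minimizing sequence. The only discrepancies are harmless constants (e.g.\ $m\le b_n\cosh^n1\cdot r^n$ rather than $b_n r^n$ in the case (b) estimate).
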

\begin{proof}
Firstly, we prove the existence of such set $G$. In the following of the proof, we always assume that $m\leq |B_1|_g$. Then directly we have $r\leq 1$ and hence $\sinh r\leq\cosh 1\cdot r$.  Combined with \eqref{Eq-ULB1}, we have
\begin{equation}\label{Eq-Min1}
	b_n r^n \leq m=|B_r|_g\leq b_n\sinh^n r\leq b_n \cosh^n 1\cdot r^n
\end{equation}
Take a point $p\in\mathbb{H}^n$, if $\mathcal{E}(F)>\mathcal{E}(B_r(p))$, then we can choose  $G=B_r(p)$, which satisfies the requirements of this theorem. Hence in the following, we may assume that
\begin{align}
 \mathcal{E}(F)&\leq\mathcal{E}(B_r(p))\notag\\
 &\leq nb_n\sinh^{n-1} r+c_3\gamma m\notag\\
 &\leq nb_n\cosh^{n-1}1\cdot r^{n-1}+\frac{n b_n^{\frac{n+\alpha}{n}}\gamma}{n-\alpha}\cosh^n 1\cosh^{n-1}(b_n^{-\frac{1}{n}}|B_1|_g^{\frac{1}{n}})|B_1|_g^{\frac{n-\alpha}{n}}\cdot r^n\notag\\
 &\leq C_4(r^{n-1}+r^n), \label{Eq-Min0}
\end{align}
where we have used Lemma \ref{Lem-Bound} with $\bar{m}=|B_1|_g$ and \eqref{Eq-Min1}. Here the constant $C_4$ can be chosen as
\begin{equation}\label{Defn-C_4}
	C_4=nb_n\cosh^{n-1}1+\frac{n b_n^{\frac{n+\alpha}{n}}\gamma}{n-\alpha}\cosh^n 1\cosh^{n-1}(b_n^{-\frac{1}{n}}|B_1|_g^{\frac{1}{n}})|B_1|_g^{\frac{n-\alpha}{n}},
\end{equation}
 is a constant depending on $n,\alpha$ and $\gamma$.
	Under this assumption, according to Theorem \ref{Thm-quan}, there exists a constant $c'$ depending only on $n$ (since $r\leq 1$), such that 
	\begin{equation}\label{Eq-Min2}
		P(F)-P(B_r)\geq \frac{c'}{\sinh^{n+1} r}\bm{\upalpha}^2(F)\geq \frac{c'}{\cosh^{n+1}1\cdot r^{n+1}}\bm{\upalpha}^2(F),
	\end{equation}
	where $\bm{\upalpha}(F)$ is hyperbolic Fraenkel asymmetry index defined in \eqref{Defn-asym}. 	Also, since $\mathcal{E}(F)\leq\mathcal{E}(B_r)$, we have
	\begin{align}
		P(F)-P(B_r)&\leq \gamma\left(NL_{\alpha}(B_r)-NL_{\alpha}(F)\right)<\gamma NL_{\alpha}(B_r)\notag\\
		&\leq c_3\gamma m\leq \frac{n b_n^{\frac{n+\alpha}{n}}\gamma}{n-\alpha}\cosh^n 1\cosh^{n-1}(b_n^{-\frac{1}{n}}|B_1|_g^{\frac{1}{n}})|B_1|_g^{\frac{n-\alpha}{n}}\cdot r^n,\label{Eq-Min3}
	\end{align}
	where we have used Lemma \ref{Lem-Bound} with $\bar{m}=|B_1|_g$ in the third inequality and \eqref{Eq-Min1} in the last inequality.  Combining \eqref{Eq-Min2} with \eqref{Eq-Min3}, we have for some constant $d_4$ depending on $n,\alpha$ and $\gamma$, such that
	\begin{equation}\label{Eq-Min4}
  \bm{\upalpha}(F)\leq d_4 r^{n+\frac{1}{2}}.
	\end{equation}
	Namely, after a suitable hyperbolic isometry of $F$ (and we also denote the new set as $F$), we have $|F\Delta B_r(p)|_g\leq d_4 r^{n+\frac{1}{2}}$. Since $|F|_g=|B_r(p)|_g=m$, we have $|F\Delta B_r(p)|_g=2|F\setminus B_r(p)|_g$ and hence
	\begin{equation}\label{Eq-Min5}
		|F\setminus B_r(p)|_g\leq\frac{1}{2}d_4 r^{n+\frac{1}{2}}:=\eta,
	\end{equation}
	then by Lemma \ref{Lem-Re-Pe}, we see that there exists $r\leq r_{F}\leq r+C_1\eta^{\frac{1}{n}}$, such that 
	\begin{equation*}
		P(F\cap B_{r_F}(p))\leq P(F)-\frac{|F\setminus B_{r_F}(p)|_g}{C_2\eta^{\frac{1}{n}}}.
	\end{equation*}
Combined this with the fact that $NL_{\alpha}(F\cap B_{r_F}(p))\leq NL_{\alpha}(F)$, we have
\begin{equation}\label{Eq-Min6}
		\mathcal{E}(F\cap B_{r_F}(p))\leq \mathcal{E}(F)-\frac{|F\setminus B_{r_F}(p)|_g}{C_2\eta^{\frac{1}{n}}}.
\end{equation}
Denote 
\begin{equation}\label{Eq-Min7}
	u=\frac{|F\setminus B_{r_F}(p)|_g}{m}\leq\frac{d_4 r^{\frac{1}{2}}}{2b_n},
\end{equation}
then $|F\cap B_{r_F}(p)|_g=m(1-u)$ and hence if $r\leq r_{h}:=\frac{b_n^2}{d_4^2}$, then the function $u\in[0,\frac{1}{2}]$. Denote $G=\Phi_{\lambda}(F\cap B_{r_F}(p))$ such that $|G|_g=|F|_g=m$, then by Proposition \ref{Prop-VP}, we have $\lambda=(1-u)^{\frac{1}{n-1}}<1$ and there holds 
\begin{align}
	P(G)&\leq (1-u)^{-2} P(F\cap B_{r_F}(p)),\label{Eq-Min8}\\
	NL_{\alpha}(G)&\leq (1-u)^{-2}NL_{\alpha}(F\cap B_{r_F}(p))\label{Eq-Min9}
\end{align}
according to Proposition \ref{Prop-Lip} and \ref{Prop-VP}. Taking into accout of the following basic inequality (cf. \cite[Page 469]{Figalli15}):
\begin{equation}\label{Eq-Min10}
	(1-u)^{-p}\leq 1+2^{p+1}u,\,\,p\in\mathbb{R}\,\,\text{and}\,\,u\in[0,\frac{1}{2}],
\end{equation}
we have that for $r\leq\min\{1,\frac{b_n^2}{d_4^2}\}$,
\begin{align}
	\mathcal{E}(G)&\leq (1+8u)\mathcal{E}(F\cap B_{r_F}(p))\notag\\
	&\leq\mathcal{E}(F)-\frac{mu}{C_2\eta^{\frac{1}{n}}}+8C_4 u(r^{n-1}+r^n)\notag\\
	&\leq\mathcal{E}(F)-u r^{n-1-\frac{1}{2n}}\bigg[\frac{2^{\frac{1}{n}}b_n}{C_2 d_4^{\frac{1}{n}}}-8C_4 r^{\frac{1}{2n}}(1+r)\bigg].\label{Eq-Min11}
\end{align}
Then there exists a radius $r_{h'}$ depending on $n,\alpha$ and $\gamma$, such that if $r\leq r_{h'}$, the inequality in \eqref{Eq-Min11} gives $\mathcal{E}(G)\leq\mathcal{E}(F)$. Since for any $x\in F\cap B_{r_F}(p)$, 
\begin{equation*}
	d_g(x,p)\leq r_F\leq r+C_1\eta^{\frac{1}{n}}=(1+2^{-\frac{1}{n}}d_4^{\frac{1}{n}}C_1 r^{\frac{1}{2n}})r,
\end{equation*}
then combined with Proposition \ref{Prop-Lip} and \eqref{Eq-Min10}, for $\forall y=\Phi_{\lambda}(x)\in G$,
\begin{align}
	d_g(y,\Phi_{\lambda}(p))&\leq (1-u)^{-\frac{2}{n-1}} d_g(x,p)\notag\\
	&\leq (1+2^{\frac{n+1}{n-1}}u)(1+2^{-\frac{1}{n}}d_4^{\frac{1}{n}}C_1 r^{\frac{1}{2n}})r\notag\\
	&\leq (1+2^{\frac{2}{n-1}}b_n^{-1}d_4 r^{\frac{1}{2}})(1+2^{-\frac{1}{n}}d_4^{\frac{1}{n}}C_1 r^{\frac{1}{2n}})r\notag\\
	&\leq (1+C_3 r^{\frac{1}{2n}})r,
\end{align}
for some constant $C_3>0$ since we have already assumed that $r\leq 1$.
 Hence if we define 
\begin{equation}\label{Defn-m0}
	m_0=\min\{|B_1|_g,|B_{r_h}|_g,|B_{r_{h'}}|_g\},
\end{equation}
then for $m\leq m_0$, the set $G$ constructed above in both cases and the constant $C_3$ satsify the requirements of this theorem. 

Then we are left to prove that for $m\leq m_0$, the minimization problem \eqref{Prob-mini} admits a minimizer $E$ with volume $m$ which is contained in a geodesic ball of radius $(1+C_3 r^{\frac{1}{2n}})r$. Firstly, we can choose a sequence of sets of finite perimeter $\{F_k\}$ with $|F_k|_g=m$ such that 
\begin{equation*}
	\lim_{k\to\infty}\mathcal{E}(F_k)=\inf_{|F|_g=m}{\mathcal{E}(F)}.
\end{equation*}
By the argument above, we can further choose a minimizing sequence $\{G_k\}$, such that up to isometries, $G_k\subset B_{(1+C_3 r^{\frac{1}{2n}})r}(p)$ for some fixed $p\in\mathbb{H}^n$. Then by the lower semicontinuity of perimeter (cf. \cite[Proposition 12.15]{Maggi-12}) and the compactness of perimeter (cf. \cite[Theorem 12.26]{Maggi-12}), there exists a set of finite perimeter $E\subset\mathbb{H}^n$, a fixed point $p\in\mathbb{H}^n$ and a subsequence, which we also denote as $\{G_k\}$ such that
\begin{align}
	G_k\to& E,\quad E\subset B_{(1+C_3 r^{\frac{1}{2n}})r}(p),\\
	P(E)&\leq\liminf_{k\to\infty} P(G_k).\label{Eq-peri}
\end{align}
Since $G_k\to E$, we have $m=|G_k|_g\to|E|_g$ and hence $|E|_g=m$. Furthermore, by Lemma \ref{Lem-NL}, we have
\begin{equation}\label{Eq-NLa}
	\lim_{k\to\infty}{NL_{\alpha}(G_k)}=NL_{\alpha}(E).
\end{equation}
Combining \eqref{Eq-peri} with \eqref{Eq-NLa} gives
\begin{align*}
	\inf_{|F|_g=m}{\mathcal{E}(F)}\leq\mathcal{E}(E)&=P(E)+\gamma NL_{\alpha}(E)\\
	&\leq \liminf_{k\to\infty} P(G_k)+\gamma\lim_{k\to\infty}{NL_{\alpha}(G_k)}\\
	&=\liminf_{k\to\infty}\mathcal{E}(G_k)\\
	&\leq \liminf_{k\to\infty}\mathcal{E}(F_k)=\inf_{|F|_g=m}{\mathcal{E}(F)},
\end{align*}
which implies that $E$ is a minimizer of the functional $\mathcal{E}(\cdot)$ with volume $m$. 
\end{proof}
\section{Geodesic ball as the minimizer}\label{Sec-Ball}
In this section, we give the proof of Theorem \ref{Thm-Existence}. By Proposition \ref{Prop-Quasi}, we know that for any given $r_0>0$, the minimizer to the minimization problem \eqref{Prob-mini} is an $(\omega,r_0)$-minimizer for the perimeter with some constant $\omega>0$. Using Theorem \ref{Lem-Min}, we can calculate which quantities $\omega$ precisely depends on provided $m\leq m_0$.
\begin{prop}\label{Prop-Qp}
	Let $n\geq 2$, $\alpha\in(0,n)$ and $\gamma>0$. Assume that $E$ is a minimizer with $|E|_g=|B_r|_g=m$ for $m\leq m_0$, then there exist two constants $\Lambda_1$ and $\Lambda_2$ depending only on $n$, $\alpha$ and $\gamma$ such that
\begin{equation}\label{In-Qp}
	P(E)\leq P(F)+\left(\frac{\Lambda_1}{r}+\Lambda_2\right)|E\Delta F|_g
\end{equation}
for every $F\subset\mathbb{H}^n$. Here $\Lambda_1$ and $\Lambda_2$ can be chosen as 
\begin{align}
	\Lambda_1&=\frac{6C_4}{b_n},\label{Defin-Lamb1}\\
	\Lambda_2&=\max\left\{\frac{6C_4}{b_n}+14\gamma c_3,\gamma c_3\left(2C_5^{\frac{2(\alpha+1-n)}{n-1}}+C_5+1\right)\right\},\label{Defin-Lamb2}
\end{align}
where $C_4$ is the constant defined in \eqref{Defn-C_4},
\begin{equation}\label{Defn-C5}
	C_5=\left(\frac{2C_4}{nb_n}\right)^{\frac{n}{n-1}},
\end{equation}
and $c_3$ is the constant defined in Lemma \ref{Lem-Bound} depending on $n$, $\alpha$ and $C_5 m_0$ (we choose $\bar{m}=C_5 m_0$).
\end{prop}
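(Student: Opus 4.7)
The plan is to refine the argument of Proposition \ref{Prop-Quasi} by tracking every constant explicitly, using the dilation $\Phi_\lambda$ and the a priori energy bound that any minimizer of volume $m$ satisfies $\mathcal{E}(E)\leq 2C_4\, r^{n-1}$, which follows from comparing $E$ with a geodesic ball of the same volume.

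First I dispose of two trivial cases. If $P(F)\geq P(E)$, the desired inequality is immediate. Otherwise, since $E$ is a minimizer with $|E|_g=|B_r|_g=m$, the estimate $\mathcal{E}(E)\leq \mathcal{E}(B_r(p))\leq C_4(r^{n-1}+r^n)\leq 2C_4\, r^{n-1}$ from \eqref{Eq-Min0} (using $m\leq m_0\leq|B_1|_g$ and hence $r\leq 1$) yields $P(F)\leq P(E)\leq 2C_4\, r^{n-1}$, and then \eqref{Eq-HEin} forces $|F|_g\leq C_5\, m$, whereupon Lemma \ref{Lem-Bound} gives the uniform bound $\|v_{F,\alpha}\|_{L^\infty}\leq c_3$ with $c_3=c_3(n,\alpha,C_5 m_0)$. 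If in addition $|F|_g\notin[m/2,2m]$, then $|E\Delta F|_g\geq m/2\geq b_n r^n/2$, so $(\Lambda_1/r+\Lambda_2)|E\Delta F|_g\geq \Lambda_1 b_n r^{n-1}/2$ dominates $P(E)$ as soon as $\Lambda_1\geq 4C_4/b_n$, and the claim holds for free with the stated $\Lambda_1=6C_4/b_n$.

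The main case is $|F|_g\in[m/2,2m]$. Choose $\lambda=(|F|_g/m)^{1/(n-1)}$ so that $|\Phi_\lambda(F)|_g=m$ by Proposition \ref{Prop-VP}, and use minimality to write
\begin{equation*}
P(E)\leq P(F)+\bigl[P(\Phi_\lambda(F))-P(F)\bigr]+\gamma\bigl[NL_{\alpha}(\Phi_\lambda(F))-NL_{\alpha}(F)\bigr]+\gamma\bigl[NL_{\alpha}(F)-NL_{\alpha}(E)\bigr].
\end{equation*}
By Lemma \ref{Lem-NL} the last bracket is at most $2\gamma c_3|E\Delta F|_g$, and the remaining task is to control the middle two. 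I split on the sign of $\lambda-1$. In Case A ($\lambda\geq 1$), Proposition \ref{Prop-VP} gives $P(\Phi_\lambda(F))\leq P(F)$, while a change of variables combined with Proposition \ref{Prop-Lip} yields $NL_{\alpha}(\Phi_\lambda(F))\leq\lambda^{-2(n-\alpha-1)}NL_{\alpha}(F)$; the middle bracket $(\lambda^{-2(n-\alpha-1)}-1)NL_{\alpha}(F)$ is non-positive when $\alpha\leq n-1$, and for $\alpha>n-1$ one uses $\lambda^{-2(n-\alpha-1)}=(|F|_g/m)^{2(\alpha+1-n)/(n-1)}\leq C_5^{2(\alpha+1-n)/(n-1)}$, which is precisely the source of that exponent in \eqref{Defin-Lamb2}. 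In Case B ($\lambda<1$), the same tools produce $P(\Phi_\lambda(F))\leq\lambda^{2-2n}P(F)$ and $NL_{\alpha}(\Phi_\lambda(F))\leq\lambda^{2-2n}NL_{\alpha}(F)$, so the first two brackets combine into $(\lambda^{2-2n}-1)\mathcal{E}(F)$; the bound $|F|_g\geq m/2$ gives $\lambda^{2-2n}-1\leq 8(m-|F|_g)/m\leq 8|E\Delta F|_g/m$, and combining with $\mathcal{E}(F)\leq 2C_4 r^{n-1}+\gamma c_3 m$ and $m\geq b_n r^n$ extracts a contribution of order $|E\Delta F|_g/r$, producing the $\Lambda_1/r$ term.

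The main bookkeeping obstacle is Case A with $\alpha>n-1$: the bound $\lambda^{-2(n-\alpha-1)}\leq C_5^{2(\alpha+1-n)/(n-1)}$ produces a term $\gamma C_5^{2(\alpha+1-n)/(n-1)}NL_{\alpha}(F)$ that is not obviously linear in $|E\Delta F|_g$. I convert it using the mean value theorem on $t\mapsto t^{2(\alpha+1-n)/(n-1)}$ together with $|F|_g-m\leq|E\Delta F|_g$; tracking the resulting constants along with the bound $NL_{\alpha}(F)\leq c_3|F|_g\leq c_3(m+|E\Delta F|_g)$ produces the $C_5+1$ factor appearing alongside $2C_5^{2(\alpha+1-n)/(n-1)}$ in \eqref{Defin-Lamb2}. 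Assembling all contributions across both cases and taking maxima then delivers the explicit values of $\Lambda_1$ and $\Lambda_2$.
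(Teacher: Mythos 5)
Your proposal is correct and follows essentially the same route as the paper: reduce to $P(F)\leq P(E)\leq 2C_4 r^{n-1}$ and to a bounded volume ratio via \eqref{Eq-HEin}, apply minimality against the rescaled competitor $\Phi_\lambda(F)$, split on $\lambda\gtrless 1$ using Propositions \ref{Prop-Lip}--\ref{Prop-VP} and Lemmas \ref{Lem-Bound}--\ref{Lem-NL}, and absorb $(\lambda^{2-2n}-1)$ via $|E\Delta F|_g/m$ and $m\geq b_n r^n$. The only caveat is bookkeeping: your grouping in Case B (bounding $(\lambda^{2-2n}-1)\mathcal{E}(F)$ with the factor $8$ rather than the paper's $(1+6|E\Delta F|_g/m)P(F)$ splitting) yields a coefficient like $16C_4/b_n$ rather than the stated $6C_4/b_n$, which is harmless since the proposition only requires some explicit constants depending on $n,\alpha,\gamma$.
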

\begin{proof}
	In the following of proof, we always assume that $P(F)\leq P(E)$, since otherwise \eqref{In-Qp} holds trivially. We claim that we can reduce to prove \eqref{In-Qp} in the case that 
	\begin{equation}\label{In-twos}
		\frac{1}{2}\leq\frac{|F|_g}{m}\leq C_5.
	\end{equation}
	Indeed, if we compare $E$ with $B_r$, by \eqref{Eq-Min0}, we have
	\begin{equation}\label{Qp1}
		\mathcal{E}(E)=P(E)+\gamma NL_{\alpha}(E)\leq C_4(r^{n-1}+r^n).
	\end{equation}
	Then if $|F|_g<\frac{1}{2}m$, we have
	\begin{equation*}
		|E\Delta F|_g\geq |E|_g-|F|_g>\frac{1}{2}m\geq\frac{b_n}{2} r^n.
	\end{equation*}
	Combined with the definition of $\Lambda_1$ and $\Lambda_2$ gives \eqref{In-Qp}. Also, if $|F|_g>C_5 m\geq C_5 b_n r^n$, then by \eqref{Eq-HEin} and $m\leq m_0$ (which implies $r\leq 1$ by \eqref{Defn-m0}), we have
	\begin{equation*}
		P(F)\geq nb_n^{\frac{1}{n}}|F|_g^{\frac{n-1}{n}}\geq nb_n C_5^{\frac{n-1}{n}}r^{n-1}=2C_4 r^{n-1}\geq P(E),
	\end{equation*}
	which also gives \eqref{In-Qp}. 
    
    We would like to emphasize here that in the subsequent proof of this proposition, we will frequently make use of Lemma \ref{Lem-Bound} and Lemma \ref{Lem-NL}. It should be noted that the constant $c_3$ in these two lemmas depends on $n$, $\alpha$ and the upper bound of the volume $\bar{m}$. By the previous argument, $\bar{m}$ can be taken as $C_5 m_0$, which is a quantity that only depends on $n$, $\alpha$ and $\gamma$. Thus, $c_3$ is also a constant that only depends on $n$, $\alpha$ and $\gamma$. Let $D:=\Phi_{\lambda}(F)$ be such that $|D|_g=\lambda^{1-n}|F|_g=m$, which implies
	\begin{equation*}
		\lambda=\left(\frac{|F|_g}{m}\right)^{\frac{1}{n-1}}.
	\end{equation*}
We consider the following two cases.

$\bullet\,\,\bf{Case\,1.}$ $2^{-\frac{1}{n-1}}\leq\lambda\leq 1$. Arguing as in \eqref{Eq-Min8} and \eqref{Eq-Min9}, we have
\begin{align}
	P(E)&\leq P(D)+\gamma NL_{\alpha}(D)-\gamma NL_{\alpha}(E)\notag\\
	&\leq {\lambda}^{2-2n}(P(F)+\gamma NL_{\alpha}(F))-\gamma NL_{\alpha}(E)\notag\\
	&={\lambda}^{2-2n} P(F)+{\lambda}^{2-2n}\gamma(NL_{\alpha}(F)-NL_{\alpha}(E))+\gamma({\lambda^{2-2n}}-1)NL_{\alpha}(E).\label{Qp2}
\end{align}
Since $\lambda^{2-2n}\leq 4$ by assumption and also
\begin{align*}
	{\lambda}^{2-2n}&=\left(\frac{m}{|F|_g}\right)^2=\left(1+\frac{m-|F|_g}{|F|_g}\right)^2\\
	&\leq 1+3\frac{m-|F|_g}{|F|_g}\leq 1+6 \frac{|E\Delta F|_g}{m}.
\end{align*}
Hence 
\begin{align*}
	P(E)&\leq\left(1+6 \frac{|E\Delta F|_g}{m}\right)P(F)+8\gamma c_3|E\Delta F|_g+6\gamma\frac{|E\Delta F|_g}{m}NL_{\alpha}(E)\\
	&\leq P(F)+\frac{6C_4}{m}(r^{n-1}+r^n)|E\Delta F|_g+14\gamma c_3|E\Delta F|_g\\
	&\leq P(F)+\left(\frac{6C_4}{b_n r}+\frac{6C_4}{b_n}+14\gamma c_3\right)|E\Delta F|_g\\
	&\leq P(F)+\left(\frac{\Lambda_1}{r}+\Lambda_2\right)|E\Delta F|_g,
\end{align*}
where we have used the assumption $P(F)\leq P(E)\leq C_4(r^{n-1}+r^n)$ in the second inequality.

$\bullet\,\,\bf{Case\,2.}$ $1\leq\lambda\leq C_5^{\frac{1}{n-1}}$. Arguing as in \eqref{Eq-Quasi3} and \eqref{Eq-Quasi4}, we have
\begin{align}
	P(E)&\leq P(D)+\gamma NL_{\alpha}(D)-\gamma NL_{\alpha}(E)\notag\\
	&\leq P(F)+\gamma{\lambda}^{-2(n-\alpha-1)}NL_{\alpha}(F)-\gamma NL_{\alpha}(E)\notag\\
	&= P(F)+\gamma{\lambda}^{-2(n-\alpha-1)}(NL_{\alpha}(F)-NL_{\alpha}(E))+\gamma({\lambda}^{-2(n-\alpha-1)}-1)NL_{\alpha}(E).\label{Qp3}
\end{align}
If $0<\alpha\leq n-1$, i.e., $\lambda^{-2(n-\alpha-1)}\leq 1$, then \eqref{Qp3} gives
\begin{align*}
	P(E)&\leq P(F)+\gamma|NL_{\alpha}(F)-NL_{\alpha}(E)|\\
	&\leq P(F)+2\gamma c_3|E\Delta F|_g\\
	&\leq P(F)+\left(\frac{\Lambda_1}{r}+\Lambda_2\right)|E\Delta F|_g.
\end{align*}
If $n-1<\alpha<n$, i.e. $1\leq\lambda^{-2(n-\alpha-1)}=\lambda^{2(\alpha+1-n)}\leq C_5^{\frac{2(\alpha+1-n)}{n-1}}$, and also
\begin{align*}
	\lambda^{2(\alpha+1-n)}&=\left(\frac{|F|_g}{m}\right)^{\frac{2(\alpha+1-n)}{n-1}}=\left(1+\frac{|F|_g-m}{m}\right)^{\frac{2(\alpha+1-n)}{n-1}}\\
	&\leq \left(1+\frac{|F|_g-m}{m}\right)^2\leq 1+(C_5+1)\frac{|F|_g-m}{m}\leq 1+(C_5+1)\frac{|E\Delta F|_g}{m}.
\end{align*}
Combined with \eqref{Qp3} gives
\begin{align*}
	P(E)&\leq P(F)+2\gamma c_3 C_5^{\frac{2(\alpha+1-n)}{n-1}}|E\Delta F|_g+\gamma c_3(C_5+1)|E\Delta F|_g\\
	&=P(F)+\gamma c_3\left(2C_5^{\frac{2(\alpha+1-n)}{n-1}}+C_5+1\right)|E\Delta F|_g\\
	&\leq P(F)+\left(\frac{\Lambda_1}{r}+\Lambda_2\right)|E\Delta F|_g.
\end{align*}
This completes the proof of this proposition.
\end{proof}
We then show that for $m\leq m_0$, the minimizer to the minimization problem \eqref{Prob-mini} is indeed a perimeter $(K,r_1)$-quasiminimizer for some suitable $K$ and $r_1$. 
\begin{lem}\label{Lem-Qua}
	For $m\leq m_0$, the minimizer $E$ to the minimization problem \eqref{Prob-mini} with $|E|_g=|B_r|_g=m$ is a perimeter $\left(3,\frac{r}{C_6}\right)$-quasiminimizer for the constant $C_6=\max\left\{1, \frac{2(\Lambda_1+\Lambda_2)}{n-1}\cosh 1\right\}$.
\end{lem}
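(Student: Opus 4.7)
The plan is to apply Proposition \ref{Prop-Qp} as the key input and reduce quasi-minimality to an absorption argument driven by the Euclidean-like isoperimetric inequality in $\mathbb{H}^n$. First, fix a geodesic ball $B_\rho(x)$ with $\rho\leq r/C_6$ and a competitor $F$ of finite perimeter with $E\Delta F\subset\subset B_\rho(x)$. Since $E$ and $F$ agree on a neighborhood of $\mathbb{H}^n\setminus B_\rho(x)$, the exterior contributions to the perimeter cancel, so $P(E)-P(F)=P(E;B_\rho(x))-P(F;B_\rho(x))$; combined with Proposition \ref{Prop-Qp} this yields
\[
P(E;B_\rho(x))\leq P(F;B_\rho(x))+\Bigl(\frac{\Lambda_1}{r}+\Lambda_2\Bigr)|E\Delta F|_g.
\]
The task is then to absorb $|E\Delta F|_g$ on the right in terms of $P(E;B_\rho(x))$ and $P(F;B_\rho(x))$.

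The crux is to apply the isoperimetric inequality not to $E$ or to $F$, but to $E\Delta F$ itself. Since $\partial^{*}(E\Delta F)$ is contained in $\partial^{*}E\cup\partial^{*}F$ up to an $H^{n-1}$-null set, and since $E\Delta F\subset\subset B_\rho(x)$, one obtains $P(E\Delta F)\leq P(E;B_\rho(x))+P(F;B_\rho(x))$. The Euclidean-like isoperimetric inequality \eqref{Eq-HEin} then gives $|E\Delta F|_g^{(n-1)/n}\leq (nb_n^{1/n})^{-1}P(E\Delta F)$. Interpolating with the trivial bound $|E\Delta F|_g^{1/n}\leq |B_\rho(x)|_g^{1/n}$ and using Lemma \ref{Lem-Eulik} to estimate $|B_\rho(x)|_g^{1/n}\leq b_n^{1/n}\cosh 1\cdot\rho$ (valid because $\rho\leq r/C_6\leq r\leq 1$, using $m\leq m_0\leq|B_1|_g$ from \eqref{Defn-m0} and $C_6\geq 1$), one arrives at
\[
|E\Delta F|_g\leq \frac{\cosh 1}{n}\,\rho\,\bigl(P(E;B_\rho(x))+P(F;B_\rho(x))\bigr).
\]

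Substituting this into the first inequality yields $(1-a)P(E;B_\rho(x))\leq(1+a)P(F;B_\rho(x))$ with $a=\tfrac{\cosh 1}{n}\bigl(\Lambda_1/r+\Lambda_2\bigr)\rho$. Using $r\leq 1$ to bound $\Lambda_1/r+\Lambda_2\leq (\Lambda_1+\Lambda_2)/r$, the choice $\rho\leq r/C_6$ with $C_6$ as in the statement forces $a\leq 1/2$, which is equivalent to $(1+a)/(1-a)\leq 3$ and hence to $P(E;B_\rho(x))\leq 3 P(F;B_\rho(x))$, the desired conclusion.

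The main difficulty in this argument is the absorption step itself: neither $P(E;B_\rho(x))$ nor $P(F;B_\rho(x))$ individually controls $|E\Delta F|_g$. Applying the isoperimetric inequality to the set $E\Delta F$ (rather than to $E$ or $F$) and interpolating through the ambient ball yields precisely one extra factor of $\rho$, and it is this factor that dominates the $\Lambda_1/r$ coefficient from Proposition \ref{Prop-Qp} once $\rho$ is chosen small compared to $r$. Tracking the precise $\cosh 1$-factor and the isoperimetric constants then produces the stated form of $C_6$.
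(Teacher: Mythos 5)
Your proof is correct and follows essentially the same route as the paper: reduce to Proposition \ref{Prop-Qp} via $P(E;B_\rho(x))-P(F;B_\rho(x))=P(E)-P(F)$, then absorb $|E\Delta F|_g$ using a bound of the form $|E\Delta F|_g\lesssim\rho\left(P(E;B_\rho(x))+P(F;B_\rho(x))\right)$ and the choice of $C_6$. The only difference is in how that bound is obtained: the paper cites \cite[Lemma 2.2]{Bogelein15} directly, whereas you derive it from the Euclidean-like isoperimetric inequality \eqref{Eq-HEin} combined with $P(E\Delta F)\leq P(E;B_\rho(x))+P(F;B_\rho(x))$ and the volume bound $|B_\rho|_g\leq b_n\sinh^n\rho$ — a self-contained substitute that yields the same conclusion.
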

\begin{proof}
	By Theorem \ref{Lem-Min}, we see that for $m\leq m_0$, the minimizer $E$ satisfies $E\subset B_{(1+C_3 r^{\frac{1}{2n}})r}(p)$ for some point $p$. Then for any geodesic ball $B_s(x)$ for $0<s\leq \frac{r}{C_6}$, if $B_s(x)\cap B_{(1+C_3 r^{\frac{1}{2n}})r}(p)=\emptyset$, then this lemma trivially holds since $P(E;B_s(x))=0$. Hence in the following, we may assume that $B_s(x)\cap B_{(1+C_3 r^{\frac{1}{2n}})r}(p)\neq\emptyset$. For any set $F\subset\mathbb{H}^n$ such that $E\Delta F\subset\subset B_s(x)$ with $s\leq\frac{r}{C_6}$, we have
	\begin{align}
		P(E;B_s(x))-P(F;B_s(x))&=P(E)-P(F)\notag\\
		&\leq\left(\frac{\Lambda_1}{r}+\Lambda_2\right)|E\Delta F|_g.\label{Eq-lem-Qua-1}
	\end{align}
By \cite[Lemma 2.2]{Bogelein15}, we have
\begin{align}
	|E\Delta F|_g&\leq\frac{\sinh(\frac{r}{C_6})\cosh(\frac{r}{C_6})}{n\cosh^2(\frac{r}{C_6})-\sinh^2(\frac{r}{C_6})}P(E\Delta F)\notag\\
	&\leq\frac{\sinh(\frac{r}{C_6})}{(n-1)\cosh(\frac{r}{C_6})}P(E\Delta F)\notag\\
	&\leq \frac{\sinh(\frac{r}{C_6})}{n-1}\left(P(E;B_s(x))+P(F;B_s(x))\right).\label{Eq-lem-Qua-2}
\end{align}
Combining \eqref{Eq-lem-Qua-1} with \eqref{Eq-lem-Qua-2} and noting that $r\leq 1$, we arrive at
\begin{align*}
	P(E;B_s(x))-P(F;B_s(x))&\leq \frac{\Lambda_1+\Lambda_2}{n-1}\frac{\sinh(\frac{r}{C_6})}{r}\left(P(E;B_s(x))+P(F;B_s(x))\right)\\
	&\leq \frac{\Lambda_1+\Lambda_2}{n-1}\frac{\cosh 1}{C_6}\left(P(E;B_s(x))+P(F;B_s(x))\right)\\
	&\leq \frac{1}{2}\left(P(E;B_s(x))+P(F;B_s(x))\right),
\end{align*}
which gives $P(E;B_s(x))\leq 3 P(F;B_s(x))$ and completes the proof of this lemma.
\end{proof}
 {\bf{Proof of Theorem \ref{Thm-Existence}}}. With the preparations above, we are now ready to prove Theorem  \ref{Thm-Existence} for some constant $m_1\leq m_0$. Argue by contradiction, if $m_1$ does not exist, then there exist two sequences $\{E_k\}\subset\mathbb{H}^n$ (which are all not geodesic balls) and $\{x_k\}\in\mathbb{H}^n$ for $k\geq 1$, such that
 \begin{equation}
  m_k=|E_k|_g=|B_{r_k}|_g\to 0,\,\, E_k\subset B_{(1+C_3 r_k^{\frac{1}{2n}})r_k}(x_k).
 \end{equation}
 We first show that $\forall \delta\in(0,1)$, there exists $k_1=k_1(\delta)$, such that 
 \begin{equation}
 	B_{(1-\delta)r_k}(x_k)\subset E_k\subset  B_{(1+C_3 r_k^{\frac{1}{2n}})r_k}(x_k)
 \end{equation}
 for $k\geq k_1$. Otherwise, if there exists some $\delta_0\in(0,1)$ and infinitely many $k$, we can find some point $\omega_k\in\partial E_k\cap B_{(1-\delta_0)r_k}(x_k)$. Then by Theorem \ref{Thm-quasim} and Lemma \ref{Lem-Qua}, there exists $y_k\in B_{\frac{\delta_0 r_k}{2C_6}}(\omega_k)$, such that $B_{\frac{\delta_0 r_k}{2C_0C_6}}(y_k)\subset\mathbb{H}^n\setminus E_k$ for some $C_0>1$ depending only on $n$. Since 
\begin{equation*}
	(1-\delta_0)r_k+\frac{\delta_0 r_k}{2C_6}+\frac{\delta_0 r_k}{2C_0C_6}<r_k,
\end{equation*}
we have 
\begin{equation*}
	B_{\frac{\delta_0 r_k}{2C_0C_6}}(y_k)\subset B_{r_k}(x_k)\setminus {E_k}\subset B_{(1+C_3 r_k^{\frac{1}{2n}})r_k}(x_k)\setminus E_k,
\end{equation*}
which gives
\begin{equation}\label{Eq-loww}
	\frac{\left|B_{(1+C_3 r_k^{\frac{1}{2n}})r_k}(x_k)\Delta E_k\right|_g}{|E_k|_g}\geq \frac{\left|B_{\frac{\delta_0 r_k}{2C_0C_6}}(y_k)\right|_g}{|E_k|_g}\geq\left(\frac{\delta_0}{2C_0C_6\cosh 1}\right)^n>0.
\end{equation}
On the other hand, 
\begin{equation*}
	\lim_{k\to\infty}\frac{\left|B_{(1+C_3 r_k^{\frac{1}{2n}})r_k}(x_k)\Delta E_k\right|_g}{|E_k|_g}=\lim_{r_k\to 0}\frac{\left|B_{(1+C_3 r_k^{\frac{1}{2n}})r_k}\right|_g-\left|B_{r_k}\right|_g}{|B_{r_k}|_g}=0,
\end{equation*}
which contradicts with \eqref{Eq-loww}. Next, for $\forall\delta\in(0,1)$, we can take $k_2=k_2(\delta)$ such that for $k\geq k_2$, $C_3 r_k^{\frac{1}{2n}}\leq\delta$. Hence, for $k\geq\max\{k_1,k_2\}$, we have
\begin{equation}\label{Eq-incl}
	B_{(1-\delta)r_k}(x_k)\subset E_k\subset  B_{(1+\delta)r_k}(x_k),
\end{equation}
from which it is easy to find a sequence $\delta_k$, non-increasing and tending to $0$, such that for any $k\in\mathbb{N}$,
\begin{equation}\label{Eq-delk}
		B_{(1-\delta_k)r_k}(x_k)\subset E_k\subset  B_{(1+\delta_k)r_k}(x_k).
\end{equation}
Combining \eqref{Eq-delk} with \cite[Lemma 2.6]{Bogelein15}, we see that the barycenter $p_k$ of $E_k$ satisfies
\begin{equation*}
	\frac{d_g(x_k,p_k)}{r_k}\to 0
\end{equation*}
as $k\to\infty$. Then for $\forall\delta\in(0,1)$, there exists $k_3:=k_3(\delta)$, such that for $k\geq k_3$, $d_g(x_k,p_k)\leq \delta r_k$. Combining this with \eqref{Eq-incl}, we conclude that for $\forall \delta\in(0,1)$, there exists $k_0:=\max\{k_1,k_2,k_3\}$, such that for $k\geq k_0$, we have
\begin{equation*}
	B_{(1-2\delta)r_k}(p_k)\subset E_k\subset  B_{(1+2\delta)r_k}(p_k).
\end{equation*}
By suitable hyperbolic isometries, we may assume that 
\begin{equation}\label{Eq-twi2}
	B_{(1-2\delta)r_k}(O)\subset E_k\subset  B_{(1+2\delta)r_k}(O),
\end{equation}
where $O$ is the origin of $\mathbb{H}^n$ and is the barycenter of all $E_k$. Then by a similar argument in \cite[\S 6.6.2]{Bogelein15}, we deduce that there exists a sequence of functions $u_k\in C^{1,\frac{1}{2}}(\mathbb{S}^n)$ which converges to 0 in $C^{1,\zeta}(\mathbb{S}^{n-1})$ for any $\zeta\in(0,\frac{1}{2})$, such that $\partial E_k$ is a radial graph with respect to $O$, in the sense that  
\begin{equation}
		\partial E_k=\{(x,r_k(1+u_k(x))):x\in\mathbb{S}^{n-1}\}.
\end{equation}
Since we assume that $E_k$ are not geodesic balls, hence we have
\begin{equation}\label{Eq-Q0}
	P(E_k)-P(B_{r_k})\leq \gamma(NL_{\alpha}(B_{r_k})-NL_{\alpha}(E_k)).
\end{equation}
By Theorem \ref{Thm-Fuglede}, Remark \ref{Rem-R1} and Remark \ref{Rem-Fuglede}, there holds
\begin{equation*}
\sinh r_k\geq\left(\frac{nb_n L_2}{\gamma L_1}\right)^{\frac{1}{n+1-\alpha}},
\end{equation*}
which is a contradiction if $k$ is sufficiently large. This completes the proof of Theorem \ref{Thm-Existence}.
\section{Nonexistence of minimizers}\label{Sec-Nonex}
In this section, we prove that the functional $\mathcal{E}(\cdot)$ does not admit minimizers for large volumes. Firstly, we prove an  interpolation estimate which was firstly derived in \cite[Lemma 7.1]{Knupfer-Muratov14} in the case of $\mathbb{R}^n$, but also holds in $\mathbb{H}^n$ through some minor modifications in its proof.
\begin{lem}\label{Lem-Inter}
	For any nonnegative $u\in BV(\mathbb{H}^n)\cap L^{\infty}(\mathbb{H}^n)$, there exists a constant $c_7$ depending only on $n$ and $\alpha$, such that 
	\begin{equation}\label{Lem-Inter1}
		\int_{\mathbb{H}^n}{u^2}\,dV_g\leq c_7||u||^{\frac{n-\alpha}{n+1-\alpha}}_{L^{\infty}(\mathbb{H}^n)}\left(\int_{\mathbb{H}^n}{|\nabla u|_g}\,dV_g\right)^{\frac{n-\alpha}{n+1-\alpha}}\left(\int_{\mathbb{H}^n}\int_{\mathbb{H}^n}\frac{u(x)u(y)}{d_g(x,y)^{\alpha}}\,dV_g(x)\,dV_g(y)\right)^{\frac{1}{n+1-\alpha}}.
	\end{equation}
\end{lem}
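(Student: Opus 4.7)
The plan is to follow the Euclidean proof of \cite[Lemma~7.1]{Knupfer-Muratov14} closely. Its engine is a scale-dependent splitting of $\int u^2$ into a nonlocal-controlled piece and a BV-controlled piece that are traded off by optimizing a free length scale $R>0$; the hyperbolic modifications will be the replacement of Euclidean geodesic balls, the use of the Euclidean-like volume lower bound $|B_R|_g\geq b_n R^n$ for small $R$ (which is built into Lemma~\ref{Lem-Eulik}), and, in a complementary regime, a separate argument exploiting the positivity of the Cheeger constant of $\mathbb{H}^n$.

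Concretely, write $M:=\|u\|_{L^\infty}$, $P:=\int|\nabla u|_g\,dV_g$, and $N:=\int\!\!\int u(x)u(y)\,d_g(x,y)^{-\alpha}\,dV_g\,dV_g$, and for each $R>0$ introduce the geodesic spherical mean $\bar u_R(x):=|B_R(x)|_g^{-1}\int_{B_R(x)}u\,dV_g$. I would split $\int u^2\,dV_g=\int u\,\bar u_R\,dV_g+\int u(u-\bar u_R)\,dV_g$ and bound the two summands separately. Using the pointwise bound $d_g(x,y)^{-\alpha}\geq R^{-\alpha}$ on $B_R(x)$, one gets $\int u\,\bar u_R\,dV_g\leq R^{\alpha}\,N/|B_R|_g$; using the $L^1$-Poincar\'e inequality on a geodesic ball, valid with a uniform constant for $R\leq 1$, one gets $\int|u-\bar u_R|\,dV_g\leq CRP$ and hence $\int u(u-\bar u_R)\,dV_g\leq CMRP$. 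Combining these with $|B_R|_g\geq b_n R^n$ yields
\[
\int u^2\,dV_g\;\leq\; C(n)\bigl(R^{\alpha-n}\,N+MRP\bigr),\qquad R\in(0,1].
\]
The Euclidean optimizer $R_{*}\sim(N/MP)^{1/(n+1-\alpha)}$ lies in $(0,1]$ precisely when $N\leq MP$; substituting $R=R_{*}$ then gives the announced interpolation estimate with constant depending only on $n$ and $\alpha$.

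The main obstacle is the complementary regime $N>MP$, where the Euclidean optimizer falls outside $(0,1]$ and the volume bound $|B_R|_g\geq b_n R^n$ together with the Euclidean-type Poincar\'e constant cease to hold uniformly because of the exponential volume growth of $\mathbb{H}^n$. To close it I would invoke the fact that the Cheeger constant of $\mathbb{H}^n$ equals $n-1$, i.e.\ $P(\Omega)\geq(n-1)|\Omega|_g$ for every $\Omega\subset\mathbb{H}^n$ of finite measure; applied via the coarea formula to the superlevel sets of $u\geq 0$, this gives $P\geq(n-1)\int u\,dV_g$ for $u\in BV(\mathbb{H}^n)\cap L^{1}$, whence $\int u^2\,dV_g\leq M\int u\,dV_g\leq MP/(n-1)$. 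In the regime $N>MP$ one has $MP\leq(MP)^{\theta}N^{1-\theta}$ with $\theta:=(n-\alpha)/(n+1-\alpha)$, so this immediately delivers the target bound with a constant depending only on $n$. Taking the larger of the two constants across the two regimes yields $c_{7}=c_{7}(n,\alpha)$, and apart from the patching of the two regimes every step is the direct hyperbolic analogue of the corresponding step in \cite[Lemma~7.1]{Knupfer-Muratov14}.
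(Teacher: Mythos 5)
Your proposal is correct, but it runs along a genuinely different track from the paper's proof, so a comparison is worthwhile. The paper never forms a spherical mean: it restricts the nonlocal double integral to the annulus $B_{2R}(x)\setminus B_R(x)$, writes $u(x)u(y)=u^2(x)+u(x)(u(y)-u(x))$ there, lower-bounds the annulus volume by $(2^n-1)b_nR^n$ via $\sinh r\geq r$ (valid for \emph{all} $R>0$), and controls the commutator term by the exact change-of-variables identity \eqref{Eq-Inter6} along geodesics, which again holds for all $R$. Consequently the paper obtains $\int u^2\,dV_g\leq aR^{\alpha-n}+bR$ for every $R>0$ and optimizes globally, with no case distinction. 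Your ball-average decomposition is essentially a dual formulation of the same trade-off, but because you invoke the local $L^1$-Poincar\'e inequality (whose constant you only control uniformly for $R\leq 1$) rather than the exact Jacobian identity, you are forced to split off the regime where the optimizer exceeds $1$; your patch there, via the linear isoperimetric inequality $P(\Omega)\geq(n-1)|\Omega|_g$ of $\mathbb{H}^n$, the coarea formula, and $\int u^2\leq \|u\|_{L^\infty}\|u\|_{L^1}$, is a correct and genuinely hyperbolic ingredient that appears nowhere in the paper. Two remarks: first, the bound you actually need, $\int_{\mathbb{H}^n}|u-\bar u_R|\,dV_g\leq CRP$, is a globally integrated Poincar\'e inequality with moving centers rather than the inequality on a single ball; it does follow (for $R\le1$) by a covering argument, or for all $R$ with $C=1$ by exactly the Fubini-plus-geodesic computation the paper uses for \eqref{Eq-Inter6} — had you carried that out, your second regime would become unnecessary. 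Second, your regime threshold should be stated in terms of the constants of the first estimate (the optimizer exceeds $1$ iff $(n-\alpha)N\gtrsim MP$ up to an $(n,\alpha)$-constant), but the interpolation $MP\leq C(MP)^{\theta}N^{1-\theta}$ survives this bookkeeping, so the patching is sound.
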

\begin{proof}
	Take $R>0$ to be determined, we have
\begin{align}
	&\int_{\mathbb{H}^n}\int_{\mathbb{H}^n}\frac{u(x)u(y)}{d_g(x,y)^{\alpha}}\,dV_g(x)\,dV_g(y)\notag\\
	\geq&\int_{\mathbb{H}^n}\int_{B_{2R}(x)\setminus B_R(x)}\frac{u(x)[u(x)+u(y)-u(x)]}{d_g(x,y)^{\alpha}}\,dV_g(y)\,dV_g(x)\notag\\
	=&\int_{\mathbb{H}^n}\int_{B_{2R}(x)\setminus B_R(x)}\frac{u^2(x)}{d_g(x,y)^{\alpha}}\,dV_g(y)\,dV_g(x)+\int_{\mathbb{H}^n}\int_{B_{2R}(x)\setminus B_R(x)}\frac{u(x)[u(y)-u(x)]}{d_g(x,y)^{\alpha}}\,dV_g(y)\,dV_g(x)\notag\\
	:=&I+II.\label{Eq-Inter1}
\end{align}
Term $I$ can be estimated as follows.
\begin{align}
	I&=\int_{\mathbb{H}^n}\int_{B_{2R}(x)\setminus B_R(x)}\frac{u^2(x)}{d_g(x,y)^{\alpha}}\,dV_g(y)\,dV_g(x)\notag\\
	&\geq (2R)^{-\alpha}|B_{2R}\setminus B_R|_g\int_{\mathbb{H}^n}{u^2(x)}\,dV_g(x)\notag\\
	&=(2R)^{-\alpha}\int_{\mathbb{S}^{n-1}}\int_R^{2R}\sinh^{n-1}r\,dr\,d\sigma\int_{\mathbb{H}^n}{u^2(x)}\,dV_g(x)\notag\\
	&=n b_n(2R)^{-\alpha}\int_R^{2R}\sinh^{n-1}r\,dr\int_{\mathbb{H}^n}{u^2(x)}\,dV_g(x).\label{Eq-Inter2}
\end{align}
Next we estimate term $II$. 
%\begin{equation}\label{Eq-Inter3}
%II=\int_{\mathbb{H}^n}\int_{B_{2R}(x)\setminus B_R(x)}\frac{u(x)[u(y)-u(x)]}{d_g(x,y)^{\alpha}}\,dV_g(y)\,dV_g(x).
%\end{equation}
Given $x\in\mathbb{H}^n$ and $y\in B_{2R}(x)\setminus B_R(x)$, denote $\gamma_{x,y}(t)$ as the geodesic connecting $x$ and $y$ with $\gamma_{x,y}(0)=x$ and $\gamma_{x,y}(1)=y$. Then we have $|\gamma_{x,y}'(t)|_g\equiv d_g(x,y)$ for $t\in[0,1]$ and
\begin{align*}
	|u(y)-u(x)|&=|u(\gamma_{x,y}(1))-u(\gamma_{x,y}(0))|\\
	&=\bigg|\int_{0}^1{(u\circ\gamma_{x,y})'(t)}\,dt\bigg|\\
	&=\bigg|\int_{0}^1{\nabla u(\gamma_{x,y}(t))\cdot \gamma_{x,y}'(t)}\,dt\bigg|\\
	&\leq \int_{0}^1{\big|\nabla u(\gamma_{x,y}(t))\big|_g\cdot \big|\gamma_{x,y}'(t)}\big|_g\,dt\\
	&=d_g(x,y) \int_{0}^1{\big|\nabla u(\gamma_{x,y}(t))\big|_g}\,dt,
\end{align*}
which implies that for any $y\in B_{2R}(x)\setminus B_R(x)$,
\begin{align}
	\bigg|\frac{u(x)[u(y)-u(x)]}{d_g(x,y)^{\alpha}}\bigg|&\leq R^{-\alpha}|u(x)||u(y)-u(x)|\notag\\
	&\leq 2R^{1-\alpha}||u||_{L^{\infty}(\mathbb{H}^n)}\int_{0}^1{\big|\nabla u(\gamma_{x,y}(t))\big|_g}\,dt\label{Eq-Inter4}.
\end{align}
Combining \eqref{Eq-Inter1} with \eqref{Eq-Inter4} gives
\begin{equation}\label{Eq-Inter5}
	II\geq -2R^{1-\alpha}||u||_{L^{\infty}(\mathbb{H}^n)}\int_{\mathbb{H}^n}\int_{B_{2R}(x)\setminus B_R(x)}\int_{0}^1{\big|\nabla u(\gamma_{x,y}(t))\big|_g}\,dt\,dV_g(y)\,dV_g(x).
\end{equation}
We claim that for any fixed $t\in[0,1]$, there holds
\begin{equation}\label{Eq-Inter6}
\int_{\mathbb{H}^n}\int_{B_{2R}(x)\setminus B_R(x)}\big|\nabla u(\gamma_{x,y}(t))\big|_g\,dV_g(y)\,dV_g(x)=|B_{2R}\setminus B_R|_g\int_{\mathbb{H}^n}|\nabla u|_g\,dV_g.
\end{equation}
Note that \eqref{Eq-Inter6} is obviously true for $t=0$, hence we only consider $t\in(0,1]$. Indeed, the volume element in geodesic polar coordinates is 
\begin{equation}\label{Eq-Inter7}
	dV_g=\sinh^{n-1}r\,dr\,d\sigma.
\end{equation}
Then the map $g: y\mapsto z:=\gamma_{x,y}(t)$ takes $r_y=d_g(x,y)$ to $r_z:=d_g(x,z)=td_g(x,y)=tr_y$. Hence the volume element is related by
\begin{equation}\label{Eq-Inter8}
	dV_g(y)=\frac{1}{t}\left(\frac{\sinh(r_z/t)}{\sinh r_z}\right)^{n-1}dV_g(z),
\end{equation}
and 
\begin{align}
	&\int_{\mathbb{H}^n}\int_{B_{2R}(x)\setminus B_R(x)}\big|\nabla u(\gamma_{x,y}(t)\big|_g\,dV_g(y)\,dV_g(x)\notag\\
	=&\int_{\mathbb{H}^n}\int_{B_{2tR}(x)\setminus B_{tR}(x)}\frac{|\nabla u(z)|_g}{t}\left(\frac{\sinh(r_z/t)}{\sinh r_z}\right)^{n-1}dV_g(z)\,dV_g(x)\notag\\
	=&\int_{\mathbb{H}^n}|\nabla u(z)|_g\int_{\{x:z\in B_{2tR}(x)\setminus B_{tR}(x)\}}\frac{1}{t}\left(\frac{\sinh(r_z/t)}{\sinh r_z}\right)^{n-1}dV_g(x)\,dV_g(z)\notag\\
	=&\int_{\mathbb{H}^n}|\nabla u(z)|_g\int_{B_{2tR}(z)\setminus B_{tR}(z)}\frac{1}{t}\left(\frac{\sinh(r_x/t)}{\sinh r_x}\right)^{n-1}dV_g(x)\,dV_g(z),\label{Eq-Inter9}
\end{align}
where $r_x=d_g(x,z)=r_z$. Note that 
\begin{equation}\label{Eq-Inter10}
	\int_{B_{2tR}(z)\setminus B_{tR}(z)}\frac{1}{t}\left(\frac{\sinh(r_x/t)}{\sinh r_x}\right)^{n-1}dV_g(x)=\int_{B_{2R}(z)\setminus B_{R}(z)}dV_g=|B_{2R}\setminus B_R|_g,
\end{equation} 
where we have used \eqref{Eq-Inter8}. Combining \eqref{Eq-Inter9} with \eqref{Eq-Inter10} gives the claim \eqref{Eq-Inter6}, and then by \eqref{Eq-Inter5}, we conclude
\begin{align}
	II&\geq -2R^{1-\alpha}||u||_{L^{\infty}(\mathbb{H}^n)}|B_{2R}\setminus B_R|_g\int_{\mathbb{H}^n}|\nabla u|_g\,dV_g\notag\\
	&=-2n b_n R^{1-\alpha}||u||_{L^{\infty}(\mathbb{H}^n)}\int_R^{2R}\sinh^{n-1}r\,dr\int_{\mathbb{H}^n}|\nabla u|_g\,dV_g.\label{Eq-Inter11}
\end{align}
Combining \eqref{Eq-Inter1} with \eqref{Eq-Inter2} and \eqref{Eq-Inter11}, we get 
\begin{align}
\int_{\mathbb{H}^n}u^2\,dV_g&\leq \frac{(2R)^{\alpha}}{nb_n\int_R^{2R}\sinh^{n-1}r\,dr}\int_{\mathbb{H}^n}\int_{\mathbb{H}^n}\frac{u(x)u(y)}{d_g(x,y)^{\alpha}}\,dV_g(x)\,dV_g(y)+2^{\alpha+1}R||u||_{L^{\infty}}\int_{\mathbb{H}^n}|\nabla u|_g\,dV_g\notag\\
&\leq \frac{2^{\alpha}R^{\alpha-n}}{(2^n-1)b_n}\int_{\mathbb{H}^n}\int_{\mathbb{H}^n}\frac{u(x)u(y)}{d_g(x,y)^{\alpha}}\,dV_g(x)\,dV_g(y)+2^{\alpha+1}R||u||_{L^{\infty}}\int_{\mathbb{H}^n}|\nabla u|_g\,dV_g\notag\\
&:=aR^{\alpha-n}+b R,\label{Eq-Inter12}
\end{align}
where we have used $\int_R^{2R}\sinh^{n-1}r\,dr\geq \int_R^{2R}r^{n-1}\,dr=\frac{2^n-1}{n}R^n$ in the second inequality.
For the function $h(R)=aR^{\alpha-n}+bR$, we have $h'(R)=a(\alpha-n)R^{\alpha-n-1}+b$. Denote
\begin{equation}\label{Eq-Inter13}
	R_0=\left(\frac{a(n-\alpha)}{b}\right)^{\frac{1}{n+1-\alpha}}
\end{equation}
Then $h(R)$ is decreasing on $(0,R_0)$ and increasing on $(R_0,\infty)$. Hence taking $R=R_0$, where $a, b$ are defined in \eqref{Eq-Inter12} yields this lemma. 
\end{proof}
\begin{cor}\label{Cor-Inter}
	Let $F\subset\mathbb{H}^n$ be a set of finite perimeter with $|F|_g=m$, then there exists a constant $c_8$ depending only on $n$ and $\alpha$, such that
	\begin{equation}\label{Cor-Inter1}
		m\leq c_8P(F)^{\frac{n-\alpha}{n+1-\alpha}}NL_{\alpha}(F)^{\frac{1}{n+1-\alpha}}.
	\end{equation}
\end{cor}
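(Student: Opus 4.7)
\medskip

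My plan is to deduce Corollary \ref{Cor-Inter} as a direct specialization of Lemma \ref{Lem-Inter}, by substituting the characteristic function $u=\chi_F$. Since $F$ has finite perimeter and finite volume $m$, we have $\chi_F\in BV(\mathbb{H}^n)\cap L^\infty(\mathbb{H}^n)$, it is nonnegative, and so the hypotheses of Lemma \ref{Lem-Inter} are met. No approximation is needed beyond what is already built into the definition of $BV$ and the distributional derivative, since the gradient measure $|\nabla\chi_F|_g$ is exactly $P(F)$.

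The second step is to identify the four quantities appearing in \eqref{Lem-Inter1}. Because $\chi_F$ is $\{0,1\}$-valued, we have $\chi_F^2=\chi_F$, so
\[
\int_{\mathbb{H}^n} u^2\,dV_g=|F|_g=m,\qquad \|u\|_{L^\infty(\mathbb{H}^n)}=1.
\]
By the definition of the perimeter in \eqref{defn-Perimeter} and the $BV$ seminorm, $\int_{\mathbb{H}^n}|\nabla u|_g\,dV_g=P(F)$. Finally, by the definition \eqref{defn-doubleint},
\[
\int_{\mathbb{H}^n}\int_{\mathbb{H}^n}\frac{u(x)u(y)}{d_g(x,y)^\alpha}\,dV_g(x)\,dV_g(y)=NL_\alpha(F).
\]

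Plugging these identifications into \eqref{Lem-Inter1} gives
\[
m\leq c_7\cdot 1^{\frac{n-\alpha}{n+1-\alpha}}\cdot P(F)^{\frac{n-\alpha}{n+1-\alpha}}\cdot NL_\alpha(F)^{\frac{1}{n+1-\alpha}},
\]
so the desired inequality holds with $c_8:=c_7$, where $c_7=c_7(n,\alpha)$ is the constant furnished by Lemma \ref{Lem-Inter}. There is essentially no obstacle; the only minor point to mention explicitly is the identity $\chi_F^2=\chi_F$, which converts the $L^2$ quantity in the lemma into the volume $m$ appearing in the corollary.
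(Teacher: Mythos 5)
Your proposal is correct and is exactly the paper's argument: the paper also obtains the corollary by applying Lemma \ref{Lem-Inter} with $u=\chi_F$, using that $\chi_F\in BV(\mathbb{H}^n)\cap L^{\infty}(\mathbb{H}^n)$. Your explicit identification of the four quantities (via $\chi_F^2=\chi_F$, $\|\chi_F\|_{L^\infty}=1$, $\int|\nabla\chi_F|_g\,dV_g=P(F)$) just spells out what the paper leaves implicit.
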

\begin{proof}
	Since $F$ is a set of finite perimeter, then the characteristic function $\chi_F$ satisfies $\chi_F\in BV(\mathbb{H}^n)\cap L^{\infty}(\mathbb{H}^n)$. Then Corollary \ref{Cor-Inter} follows from \eqref{Lem-Inter1} with $u=\chi_F$.
\end{proof}
Then we prove that the infimum of the functional $\mathcal{E}(\cdot)$ can be controlled two-sided by the volume $m$.
\begin{thm}\label{Thm-two}
 For all $n\geq 2$, $0<\alpha<n$ and $\gamma>0$, there exist two positive constants $C_1'$ and $C_2'$, depending only on $n,\alpha$ and $\gamma$, such that
 \begin{equation}\label{Thm-Bound}
 	C_1'\max\{m^{\frac{n-1}{n}},m\}\leq \inf_{|F|_g=m}{\mathcal{E}(F)}\leq C_2'\max\{m^{\frac{n-1}{n}},m\}.
 \end{equation}
 Furthermore, for $m\geq 1$, we have that for every set of finite perimeter $F\subset\mathbb{H}^n$ satisfying $|F|_g=m$ and $\mathcal{E}(F)\leq C_3' m$ with some $C_3'>0$, then 
\begin{equation}\label{Thm-Bound1}
	C_4' m\leq\min\{P(F), NL_{\alpha}(F)\}\leq \max\{P(F), NL_{\alpha}(F)\}\leq \frac{C_3' m}{\min\{1,\gamma\}}
\end{equation}
for some $C_4'>0$ depending only on $n, \alpha, \gamma$ and $C_3'$, but not on $m$.
\end{thm}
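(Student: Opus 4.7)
The plan is to combine Lemma \ref{Lem-ER}, the Euclidean-type isoperimetric inequality of Remark \ref{Rem-IsoE}, and the interpolation inequality of Corollary \ref{Cor-Inter} to obtain both the two-sided bound on the infimum and the subsequent fine estimates on $P(F)$ and $NL_{\alpha}(F)$.

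For the upper bound on $\inf_{|F|_g=m}\mathcal{E}(F)$, I would simply use the test configuration $\widehat{F}_R$ described before Lemma \ref{Lem-ER}. Since $|\widehat{F}_R|_g=m$ by construction, \eqref{Eq-ER} directly gives
$$\inf_{|F|_g=m}\mathcal{E}(F)\leq \limsup_{R\to\infty}\mathcal{E}(\widehat{F}_R)\leq c_5\max\{m,m^{(n-1)/n}\},$$
so $C_2'=c_5$ works. For the lower bound I would split into two regimes according to the value of $m$. When $m\leq 1$, so that $\max\{m,m^{(n-1)/n}\}=m^{(n-1)/n}$, the Euclidean-like inequality \eqref{Eq-HEin} already yields $\mathcal{E}(F)\geq P(F)\geq nb_n^{1/n}m^{(n-1)/n}$. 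When $m\geq 1$, so that $\max\{m,m^{(n-1)/n}\}=m$, I would combine Corollary \ref{Cor-Inter}, which gives $m\leq c_8\,P(F)^{\theta_1}NL_{\alpha}(F)^{\theta_2}$ with $\theta_1=\tfrac{n-\alpha}{n+1-\alpha}$ and $\theta_2=\tfrac{1}{n+1-\alpha}$, with the weighted arithmetic--geometric mean inequality
$$P(F)+\gamma NL_{\alpha}(F)=\theta_1\cdot\frac{P(F)}{\theta_1}+\theta_2\cdot\frac{\gamma NL_{\alpha}(F)}{\theta_2}\geq \theta_1^{-\theta_1}\theta_2^{-\theta_2}\gamma^{\theta_2}\,P(F)^{\theta_1}NL_{\alpha}(F)^{\theta_2}\geq C(n,\alpha,\gamma)\,m.$$
Taking $C_1'$ as the minimum of the two constants produced by the two regimes then yields \eqref{Thm-Bound}.

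For the second statement, the upper bounds are immediate from $\mathcal{E}(F)=P(F)+\gamma NL_{\alpha}(F)\leq C_3'm$, which gives $P(F)\leq C_3'm$ and $NL_{\alpha}(F)\leq C_3'm/\gamma$ separately; taken together they yield $\max\{P(F),NL_{\alpha}(F)\}\leq C_3'm/\min\{1,\gamma\}$. For the matching lower bounds on each term individually I would feed these upper bounds back into the interpolation inequality in the form $m^{n+1-\alpha}\leq c_8^{n+1-\alpha}P(F)^{n-\alpha}NL_{\alpha}(F)$ (which is Corollary \ref{Cor-Inter} raised to the power $n+1-\alpha$): substituting the upper bound $NL_{\alpha}(F)\leq C_3'm/\gamma$ and solving for $P(F)$ gives $P(F)\geq C_5'(n,\alpha,\gamma,C_3')\,m$, while substituting the upper bound $P(F)\leq C_3'm$ and solving for $NL_{\alpha}(F)$ gives $NL_{\alpha}(F)\geq C_6'(n,\alpha,\gamma,C_3')\,m$. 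Taking $C_4'=\min\{C_5',C_6'\}$ then completes \eqref{Thm-Bound1}.

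There is no serious obstacle in this argument: Lemma \ref{Lem-ER} supplies the competitor for the upper bound, and Corollary \ref{Cor-Inter} (combined with Remark \ref{Rem-IsoE} in the small-volume regime) drives every lower bound. The only point requiring real care is the exponent bookkeeping in the weighted AM--GM step and in the back-substitution into the interpolation inequality, where one must verify that the powers of $m$ cancel exactly so that the resulting constants are independent of $m$ and depend only on the admissible parameters $n,\alpha,\gamma$ (and on $C_3'$ in the second statement).
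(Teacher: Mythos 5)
Your proposal is correct and follows essentially the same route as the paper: the upper bound via the competitor $\widehat{F}_R$ of Lemma \ref{Lem-ER}, the lower bound via \eqref{Eq-HEin} together with Corollary \ref{Cor-Inter}, and \eqref{Thm-Bound1} by feeding the trivial upper bounds on $P(F)$ and $NL_{\alpha}(F)$ back into the interpolation inequality. The only cosmetic difference is that you derive $\mathcal{E}(F)\gtrsim P(F)^{\theta_1}NL_{\alpha}(F)^{\theta_2}$ by weighted AM--GM, whereas the paper simply bounds each factor by $\mathcal{E}(F)$ (respectively $\mathcal{E}(F)/\gamma$) and uses $\theta_1+\theta_2=1$; both give the same conclusion.
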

 \begin{proof}
 	By Corollary \ref{Cor-Inter}, we have
 \begin{align}
 	m&\leq c_8P(F)^{\frac{n-\alpha}{n+1-\alpha}}NL_{\alpha}(F)^{\frac{1}{n+1-\alpha}}\notag\\
 	&\leq c_8\gamma^{-\frac{1}{n+1-\alpha}}\mathcal{E}(F).\label{Eq-two}
 \end{align}
 On the other hand,
 \begin{equation}\label{Eq-two1}
 	\mathcal{E}(F)\geq P(F)\geq n b_n^{\frac{1}{n}}|F|_g^{\frac{n-1}{n}}=n b_n^{\frac{1}{n}} m^{\frac{n-1}{n}},
 \end{equation}
 where we have used \eqref{Eq-HEin} in the second inequality. Then combining \eqref{Eq-two} with \eqref{Eq-two1} gives the lower bound in \eqref{Thm-Bound}. The upper bound in \eqref{Thm-Bound} is due to 
 \begin{equation*}
  \inf_{|F|_g=m}{\mathcal{E}(F)}\leq \limsup_{R\to\infty}\mathcal{E}({\widehat{F}_R})\leq c_5\max\{m,m^{\frac{n-1}{n}}\}
\end{equation*}
by Lemma \ref{Lem-ER}.

Now assume that $|F|_g=m$ and $\mathcal{E}(F)\leq C_3' m$.
Then 
\begin{align}
	m&\leq c_8P(F)^{\frac{n-\alpha}{n+1-\alpha}}NL_{\alpha}(F)^{\frac{1}{n+1-\alpha}}\notag\\
	&=c_8\gamma^{-\frac{1}{n+1-\alpha}}P(F)^{\frac{n-\alpha}{n+1-\alpha}}\left(\gamma NL_{\alpha}(F)\right)^{\frac{1}{n+1-\alpha}}\notag\\
	&\leq c_8 (C_3')^{\frac{1}{n+1-\alpha}}\gamma^{-\frac{1}{n+1-\alpha}}P(F)^{\frac{n-\alpha}{n+1-\alpha}}m^{\frac{1}{n+1-\alpha}},\label{Eq-two2}
\end{align}
which implies that 
\begin{equation}\label{Eq-two3}
	P(F)\geq c_8^{-\frac{n+1-\alpha}{n-\alpha}}\left(\frac{\gamma}{C_3'}\right)^{\frac{1}{n-\alpha}}m.
\end{equation}
Arguing similarily as \eqref{Eq-two2} using $P(F)\leq \mathcal{E}(F)\leq C_3' m$ gives 
\begin{equation}\label{Eq-two4}
	NL_{\alpha}(F)\geq c_8^{\alpha-n-1}(C_3')^{\alpha-n}m.
\end{equation}
Combining \eqref{Eq-two3} with \eqref{Eq-two4} gives the lower bound of \eqref{Thm-Bound1} with the constant
\begin{equation}\label{EQ-two5}
	C_4'=\min\left\{c_8^{-\frac{n+1-\alpha}{n-\alpha}}\left(\frac{\gamma}{C_3'}\right)^{\frac{1}{n-\alpha}}, c_8^{\alpha-n-1}(C_3')^{\alpha-n}\right\}.
\end{equation}
The upper bound of \eqref{Thm-Bound1} follows from the fact that
\begin{equation*}
	\max\{P(F),\gamma NL_{\alpha}(F)\}\leq\mathcal{E}(F)\leq C_3' m.
\end{equation*}
 \end{proof}
In the next lemma, we show that for a minimizer $E$ with volume $m\geq 1$, the diameter of $\bar{E}^M$ is two-sided bounded by $m$.
\begin{lem}\label{Lem-DB}
	 For all $n\geq 2$, $0<\alpha<n$ and $\gamma>0$, let $E\subset\mathbb{H}^n$ be a minimizer of the functional $\mathcal{E}(\cdot)$ with $|E|_g=m\geq 1$. Then
\begin{equation}\label{Lem-DB1}
	C_5' m^{\frac{1}{\alpha}}\leq \mathrm{diam}\, \bar{E}^M\leq C_6' m,
\end{equation} 
for two positive constants $C_5'$ and $C_6'$, depending only on $n, \alpha$ and $\gamma$. 
\end{lem}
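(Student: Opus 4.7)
The plan is to prove the two bounds separately. The lower bound will follow from a direct estimate of the nonlocal term; the upper bound will be obtained via a covering argument based on the uniform lower density bound (Lemma \ref{Lem-ULB}) combined with the topological connectedness of $\bar{E}^M$ derived from the indecomposability of $E$ (Theorem \ref{Thm-EI}).

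For the lower bound $\mathrm{diam}\,\bar{E}^M \geq C_5' m^{1/\alpha}$, set $D := \mathrm{diam}\,\bar{E}^M$. Since $|E \Delta \bar{E}^M|_g = 0$, for a.e.\ pair $(x,y) \in E \times E$ we have $d_g(x,y) \leq D$, whence
\begin{equation*}
NL_\alpha(E) \geq \int_E \int_E \frac{1}{D^\alpha}\,dV_g(x)\,dV_g(y) = \frac{m^2}{D^\alpha}.
\end{equation*}
Since $E$ is a minimizer with $m \geq 1$, Theorem \ref{Thm-two} gives $\mathcal{E}(E) \leq C_2' m$, and the second part of Theorem \ref{Thm-two} applied with $C_3' = C_2'$ produces $NL_\alpha(E) \leq C_2' m/\min\{1,\gamma\}$. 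Combining the two inequalities yields $D^\alpha \geq \min\{1,\gamma\}\, m/C_2'$, which is the desired bound.

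For the upper bound $\mathrm{diam}\,\bar{E}^M \leq C_6' m$, I select a maximal collection $\{x_1, \ldots, x_N\} \subset \bar{E}^M$ with pairwise hyperbolic distances $d_g(x_i, x_j) \geq 2$. The balls $B_1(x_i)$ are then disjoint and Lemma \ref{Lem-ULB} (applicable because $m \geq 1$) gives $|E \cap B_1(x_i)|_g \geq c_6$ for each $i$; summing produces $N c_6 \leq m$, i.e., $N \leq m/c_6$. By maximality, $\bar{E}^M \subset \bigcup_{i=1}^{N} B_2(x_i)$. I then introduce an auxiliary graph $\mathcal{G}$ on the vertex set $\{x_1, \ldots, x_N\}$ with edges connecting $x_i$ and $x_j$ whenever $d_g(x_i, x_j) \leq 5$. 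Once $\mathcal{G}$ is shown to be connected, its combinatorial diameter is at most $N-1$; combining this fact with the $B_2$-covering yields
\begin{equation*}
\mathrm{diam}\,\bar{E}^M \leq 5(N-1) + 4 \leq \left(\frac{5}{c_6} + 4\right) m \quad \text{for } m \geq 1,
\end{equation*}
which is the claimed bound.

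The main obstacle is to prove the connectedness of $\mathcal{G}$, which I will derive from the indecomposability of $E$. Suppose $\mathcal{G}$ admits a disconnection $V_1 \sqcup V_2$, and set $U_\ell := \bigcup_{x_i \in V_\ell} B_2(x_i)$. Finiteness of $V_1, V_2$ and the edge threshold force $\min\{d_g(x_i, x_j) : x_i \in V_1,\, x_j \in V_2\} > 5$, hence $\mathrm{dist}(U_1, U_2) > 1 > 0$. Both $U_\ell$ meet $\bar{E}^M$ at the chosen vertices, so Lemma \ref{Lem-ULB} ensures $|E \cap U_\ell|_g \geq c_6 > 0$. Since $\bar{E}^M \subset U_1 \cup U_2$, the partition $E = (E \cap U_1) \sqcup (E \cap U_2)$ holds up to a null set; the additivity of perimeter for sets at positive distance then delivers $P(E) = P(E \cap U_1) + P(E \cap U_2)$, contradicting the indecomposability of $E$ (Theorem \ref{Thm-EI}). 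The delicate point is to combine the discreteness of the chosen vertex set with the variational indecomposability condition in a coherent way, ensuring both positive distance between the pieces and positive measure of each piece.
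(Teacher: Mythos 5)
Your proof is correct. The lower bound is the same as the paper's: bound $NL_\alpha(E)$ from below by $m^2/(\mathrm{diam}\,\bar{E}^M)^\alpha$ and from above via Theorem \ref{Thm-two}. For the upper bound you use exactly the same two pillars as the paper --- the uniform lower density bound of Lemma \ref{Lem-ULB} to count volume, and the indecomposability from Theorem \ref{Thm-EI} to rule out gaps --- but you implement them differently. The paper works in the upper half-space model, slices $\bar{E}^M$ by horospheres $\{x_n=t\}$ into slabs of hyperbolic width $3$, uses indecomposability to guarantee a point of $\bar{E}^M$ in each slab (otherwise the empty slab would split $E$ into two pieces at positive distance), and sums the densities of the resulting $N\approx d/3$ disjoint unit balls. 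You instead take a maximal $2$-separated net in $\bar{E}^M$, bound its cardinality $N\le m/c_6$ by the same volume count, and then chain through a graph on the net whose connectedness is again forced by indecomposability (a disconnection would produce two pieces of $E$ of positive measure at distance $>1$, whence $P(E)=P(E\cap U_1)+P(E\cap U_2)$, which is the standard additivity of perimeter for sets at positive distance and contradicts Theorem \ref{Thm-EI}). Your version is coordinate-free and avoids the normalization "place $x^{(1)}$ below $\Pi_{\mathrm{e}}$ and $x^{(2)}$ above $\Pi_{\mathrm{e}^{d-1}}$", at the cost of the extra combinatorial layer (maximality, graph diameter $\le N-1$); the paper's version is shorter once the half-space model and the horosphere distance formula $d_g(\Pi_t,\Pi_s)=|\log(s/t)|$ are in hand. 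Both yield $\mathrm{diam}\,\bar{E}^M\le C m$ with a constant of the same nature, namely an explicit multiple of $1/c_6$.
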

\begin{proof}
	Denote $d:=\mathrm{diam}\,\bar{E}^M$. Since $E$ is essentially bounded and indecomposable by Theorem \ref{Thm-EI}, hence $d<\infty$ and 
	\begin{align}
		NL_{\alpha}(E)&=\int_{E}\int_{E}{\frac{1}{d_g(x,y)^{\alpha}}}\,dV_g(x)\,dV_g(y)\notag\\
		&=\int_{\bar{E}^M}\int_{\bar{E}^M}{\frac{1}{d_g(x,y)^{\alpha}}}\,dV_g(x)\,dV_g(y)\notag\\
		&\geq\frac{m^2}{d^{\alpha}}.\label{Eq-EI}
	\end{align}
	Also, by Theorem \ref{Thm-two}, 
	\begin{equation}\label{Eq-EI1}
		NL_{\alpha}(E)\leq \frac{C_2' m}{\gamma}.
	\end{equation}
	Then combining \eqref{Eq-EI} with \eqref{Eq-EI1} gives the lower bound of \eqref{Lem-DB1} with 
	\begin{equation}\label{Eq-EI2}
		C_5'=\left(\frac{\gamma}{C_2'}\right)^{\frac{1}{\alpha}}.
	\end{equation}
	We next turn to prove the upper bound of \eqref{Lem-DB1}. We know that in the upper-half space model $U^n$, the Euclidean hyperplane $\{x_{n}=t:t>0\}$ is a family of horospheres (\cite[Theorem 4.6.5]{Ratcliffe19}), which we denote as $\Pi_t$. It is easily seen that the distance between $\Pi_t$ and $\Pi_s$ satisfies $d_g(\Pi_t, \Pi_s)=|\log(\frac{s}{t})|$ for any $s, t>0$ by \eqref{Ex-metric}. Clearly, we may assume $d>6$ (otherwise $d\leq 6m$ and the upper bound follows). Without loss of generality, we may assume that there exist two points $x^{(1)}, x^{(2)}\in\bar{E}^M$, such that the $n$-th coordinate $x_n^{(1)}<1$ and $x_n^{(2)}>\mathrm{e}^{d-1}$. Let $N$ be the largest integer no larger than $\frac{d-2}{3}$. Since $E$ is indecomposable, there exists $N$ disjoint balls $B_1(x^j), j=1,\dots,N$, with $x^j\in\bar{E}^M$ such that the $n$-th coordinate of $x^j$ satisfies $\mathrm{e}^{3j-1}<x^j_n<\mathrm{e}^{3j}$. Combined with Lemma \ref{Lem-ULB}, we have
	\begin{equation*}
		m=|E|_g\geq \sum_{j=1}^N |E\cap B_1(x^j)|_g\geq c_6 N\geq \frac{d-5}{3}c_6= \frac{d-5}{3d}c_6\cdot d\geq \frac{c_6}{18}d,
	\end{equation*}
	which gives the upper bound of \eqref{Lem-DB1} with $C_6'=\max\{\frac{18}{c_6}, 6\}$. 
\end{proof}
{\noindent\bf{Proof of Theorem \ref{Thm-Nonexistence}.}} Assume that $E\subset\mathbb{H}^n$ is a minimizer of the functional $\mathcal{E}(\cdot)$ with $|E|_g=m$, we only need to prove that $m$ has an upper bound depending only on $n, \alpha$ and $\gamma$. If $\alpha<1$, then by Lemma \ref{Lem-DB}, we have an upper bound for $m$. Hence in the following, we assume that $1\leq\alpha<2$. Denote $d:=\mathrm{diam}\,\bar{E}^M<\infty$ and for any $t>0$,
\begin{align}
	\Pi_t&:=\{x=(x_1,\dots,x_n)\in U^n:x_n=t\},\label{Eq-Non0}\\
	U(t)&:=|E\cap\{x=(x_1,\dots,x_n)\in U^n: 1<x_n<t\}|_g. \label{Eq-Non}
\end{align}
Also, the function $u(x)=x_n$ defined in $U^n$ has gradient $|\nabla u|_g=x_n$. By a suitable isometry of $U^n$, we may assume that there are two points $x^{(1)}, x^{(2)}\in\bar{E}^M$ such that their $n$-th coordinates satsify $x^{(1)}_n< \mathrm{e}$ and $x^{(2)}_n>\mathrm{e}^{d-1}$ respectively, with
\begin{equation}\label{Eq-Non1}
   U(1)=0,\quad	 U(\mathrm{e}^d)=m,\quad U(\mathrm{e}^{\frac{d}{2}})\leq\frac{m}{2}.
\end{equation}
Without loss of generality, we may assume that $d>8$ (Otherwise, Lemma \ref{Lem-DB} gives an upper bound of $m$) and $m>1$. For a given $t\in(1,\mathrm{e}^{\frac{d}{2}}]$, we cut the set $E$ by the horpsphere $\Pi_t$ into two pieces $E_1$ and $E_2$, which are then moved apart to a large distance $R>0$, with the new set denoted as $E_t^R=E_{1,t}^R\cup E_{2,t}^R$.
Define
\begin{equation}\label{Eq-Non2}
	\rho(t)=\frac{1}{2}\left(P(E_t^R)-P(E)\right)=H^{n-1}(\mathring{E}^M\cap\Pi_t),
\end{equation}
where the second equality follows from the same argument as in the proof of Lemma \ref{Lem-ULB}. On the other hand,
\begin{align}
	&\gamma NL_{\alpha}(E_t^R)-\gamma NL_{\alpha}(E)\notag\\
	=&2\gamma\int_{E_{1,t}^R}\int_{E_{2,t}^R}{\frac{1}{d_g(x,y)^{\alpha}}}\,dV_g(x)\,dV_g(y)-2\gamma\int_{E_1}\int_{E_2}{\frac{1}{d_g(x,y)^{\alpha}}}\,dV_g(x)\,dV_g(y)\notag\\
	\leq&\frac{2\gamma}{R^{\alpha}}m^2-\frac{2\gamma}{d^{\alpha}} U(t)(m-U(t))\notag\\
	\leq &\frac{2\gamma}{R^{\alpha}}m^2-\frac{\gamma m}{d^{\alpha}}U(t),\label{Eq-Non3}
\end{align}
where we have used the assumption that $U(t)\leq U(\frac{d}{2})\leq \frac{m}{2}$ in the last inequality. Then combining \eqref{Eq-Non2} with \eqref{Eq-Non3} gives
\begin{equation}\label{Eq-Non4}
	\mathcal{E}(E_t^R)-\mathcal{E}(E)\leq 2\rho(t)+\frac{2\gamma}{R^{\alpha}}m^2-\frac{\gamma m}{d^{\alpha}}U(t).
\end{equation}
Since $E$ is a minimizer, hence letting $R\to\infty$ in \eqref{Eq-Non4} gives
\begin{equation}\label{Eq-Non5}
	\rho(t)\geq \frac{\gamma m}{2d^{\alpha}}U(t).
\end{equation}
On the other hand, by the coarea formula, 
\begin{equation}\label{Eq-Non6}
	U(t)=\int_{1}^t\int_{\mathring{E}^M\cap\Pi_{t'}}\frac{1}{|\nabla u|_g}\,d\mu\,dt'=\int_{1}^t\frac{\rho(t')}{t'}\,dt',
\end{equation}
where we have used \eqref{Eq-Non2} and the fact $|\nabla u|_g=x_n=t'$ on the horosphere $\Pi_{t'}$. 
Combining \eqref{Eq-Non5} with \eqref{Eq-Non6} gives
\begin{equation}\label{Eq-Non7}
	\frac{d U(t)}{dt}=\frac{\rho(t)}{t}\geq \frac{\gamma m}{2d^{\alpha}}\frac{U(t)}{t},\quad\text{a.e.} \, t\in(\mathrm{e}^{\frac{d}{4}},\mathrm{e}^{\frac{d}{2}}),
\end{equation} 
which implies that the function $V(t)$ defined  by
\begin{equation*}
	V(t)=U(t)t^{-\frac{\gamma m}{2d^{\alpha}}} 
\end{equation*}
satisfies
\begin{equation}\label{Eq-Non8}
	\frac{d}{dt}V(t)\geq 0, \quad\text{a.e.} \, t\in(\mathrm{e}^{\frac{d}{4}},\mathrm{e}^{\frac{d}{2}}),
\end{equation}
Then we have in turn that 
\begin{equation}\label{Eq-Non9}
	U(t)\geq U(\mathrm{e}^{\frac{d}{4}})\left(\frac{t}{\mathrm{e}^{\frac{d}{4}}}\right)^{\frac{\gamma m}{2d^{\alpha}}}, \quad t\in[\mathrm{e}^{\frac{d}{4}},\mathrm{e}^{\frac{d}{2}}].
\end{equation}
Since we assume that $d>8$ and $m>1$, hence we have
\begin{equation}\label{Eq-Non10}
  U(\mathrm{e}^{\frac{d}{4}})\geq 	|E\cap B_1(x^{(1)})|_g\geq c_6\min\{1,m\}=c_6
\end{equation}
by Lemma \ref{Lem-ULB}. Taking $t=\mathrm{e}^{\frac{d}{2}}$ in \eqref{Eq-Non9}, and using \eqref{Eq-Non1}, \eqref{Eq-Non10} and Lemma \ref{Lem-DB}, we conclude
\begin{equation}
	m\geq 2c_6\mathrm{e}^{\frac{\gamma}{8}md^{1-\alpha}}\geq 2c_6\mathrm{e}^{\frac{\gamma}{8}{(C_6')}^{1-\alpha}m^{2-\alpha}},
\end{equation}
which yields an upper bound for $m$ provided $\alpha<2$. This completes the proof of Theorem \ref{Thm-Nonexistence}.

\appendix
\section{The quantitive estimates in Theorem \ref{Thm-Fuglede}}
In this appendix, we prove claims \eqref{Eq-F10.2} and \eqref{Eq-F10.5} in Theorem \ref{Thm-Fuglede}. For the convenience of readers, we recall some notations. Assume that $n\geq 2$, $0<\alpha<n$ and $R_0>0$, $u\in C^1(\mathbb{S}^{n-1})$ with $||u||_{C^1(\mathbb{S}^{n-1})}\leq\frac{1}{2}$, $0<t\leq 2\varepsilon_0$ for some $\varepsilon_0\in(0,\frac{1}{2}]$. If $E_t$ is a nearly spherical set with $|E_t|_g=|B_r|_g$ for some $0<r\leq R_0$, whose boundary is given by 
	\begin{equation*}
	\partial E_t=\{(x,r(1+tu(x))):x\in\mathbb{S}^{n-1}\}.
\end{equation*}
By the previous arguments in Theorem \ref{Thm-Fuglede}, we already have
\begin{equation}\label{Eq-a0}
	NL_{\alpha}(B_r)-NL_{\alpha}(E_t)=NL_{\alpha}(B_r)-I+\frac{t^2 r^2}{2}h(t),
\end{equation}
where
\begin{align}
	NL_{\alpha}(B_r)&=\int_{\mathbb{S}^{n-1}}\,d\sigma_x\int_{\mathbb{S}^{n-1}}\,d\sigma_y\int_0^{r}\int_0^{r}{f_{|x-y|}(\rho,s)}\,d\rho\,ds,\label{Eq-a1}\\
	I&=\int_{\mathbb{S}^{n-1}}\,d\sigma_x\int_{\mathbb{S}^{n-1}}\,d\sigma_y\int_0^{r(1+tu(x))}\int_0^{r(1+tu(x))}{f_{|x-y|}(\rho,s)}\,d\rho\,ds,\label{Eq-a2}\\
	h(t)&=\int_{\mathbb{S}^{n-1}}\,d\sigma_x\int_{\mathbb{S}^{n-1}}\,d\sigma_y\int_{u(y)}^{u(x)}\int_{u(y)}^{u(x)}{f_{|x-y|}(r(1+t\rho),r(1+ts))}\,d\rho\,ds.\label{Eq-a3}
\end{align}
Here the function $f_{|x-y|}(\rho,s)$ is defined as 
\begin{equation*}
		f_{|x-y|}(\rho,s)=\frac{\sinh^{n-1}\rho\sinh^{n-1}s}{\mathrm{arccosh}^{\alpha}\left[\cosh(\rho-s)+\frac{\sinh\rho\sinh s}{2}{|x-y|}^2\right]}.
\end{equation*}
Then by change of variables, we see that
\begin{equation}\label{Eq-a4}
	I=\int_{\mathbb{S}^{n-1}}(1+tu(x))^2\,d\sigma_x\int_{\mathbb{S}^{n-1}}\,d\sigma_y\int_0^{r}\int_0^{r}{f_{|x-y|}\left((1+tu(x))\rho,(1+tu(x))s\right)}\,d\rho\,ds.
\end{equation}
\begin{lem}\label{Lem-appen--1}
	There exists a constant $\varepsilon_0\in(0,\frac{1}{2}]$ depending on $n$ and $R_0$, such that for $0<t\leq 2\varepsilon_0$,  we have
	\begin{equation}\label{Eq-a5}
	f_{|x-y|}\left((1+tu(x))\rho,(1+tu(x))s\right)\geq f_{|x-y|}(\rho,s)\left(1+tuG_{|x-y|}(\alpha,\rho,s)+t^2u^2D\right)
	\end{equation}
	for some function $G_{|x-y|}(\alpha,\rho,s)$ and a positive constant $D$ depending on $n,\alpha$ and $R_0$. Moreover, there exists a constant $D'$ depending on $n,\alpha$ and $R_0$, such that $\big|G_{|x-y|}(\alpha,\rho,s)\big|\leq D'$.
\end{lem}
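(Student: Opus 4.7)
The strategy is to regard $a \mapsto F(a) := f_{|x-y|}((1+a)\rho, (1+a)s)$ as a smooth positive function of a single real variable on the interval $|a| \leq \varepsilon_0$ (note that $|tu(x)| \leq \varepsilon_0$ under the hypotheses), and to prove the inequality as a second-order Taylor estimate around $a = 0$. Substituting $a = tu(x)$ then recovers the claimed bound, with $G_{|x-y|}(\alpha,\rho,s) = F'(0)/F(0)$ and $D$ arising from a uniform control of the Taylor remainder.

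The first step is to work with $\phi(a) := \log F(a)$, which decomposes as
\begin{equation*}
\phi(a) = (n-1)\log\sinh((1+a)\rho) + (n-1)\log\sinh((1+a)s) - \alpha\log h_a,
\end{equation*}
where $h_a := h((1+a)\rho,(1+a)s)$ and $h(\rho,s) = \mathrm{arccosh}\bigl[\cosh(\rho-s) + \tfrac{1}{2}\sinh\rho\sinh s\,|x-y|^2\bigr]$. Differentiating at $a = 0$ yields
\begin{equation*}
G_{|x-y|}(\alpha,\rho,s) = (n-1)(\rho\coth\rho + s\coth s) - \alpha\,\frac{\rho h_\rho + s h_s}{h}.
\end{equation*}
The first bracket is uniformly bounded on $(0,R_0]^2$ since $\sigma\coth\sigma$ extends continuously to $[0,R_0]$. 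For the second term, the delicate point is that $h \to 0$ on the degenerate locus $\{\rho = s\} \cap \{|x-y| = 0\}$; using $\sinh h\cdot h_\rho = W_\rho$ with $W = \cosh(\rho-s) + \tfrac12\sinh\rho\sinh s\,|x-y|^2$ together with the identity
\begin{equation*}
\sinh^2 h = W^2-1 = \sinh^2(\rho-s) + \cosh(\rho-s)\sinh\rho\sinh s\,|x-y|^2 + \tfrac{1}{4}\sinh^2\rho\sinh^2 s\,|x-y|^4,
\end{equation*}
one verifies that $\rho h_\rho + s h_s$ vanishes at least as fast as $h$ on the singular locus, giving a uniform bound $|G_{|x-y|}(\alpha,\rho,s)| \leq D'$ depending only on $n$, $\alpha$ and $R_0$.

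Next I would bound $\phi''$ uniformly. From $F''(a) = F(a)\bigl[\phi''(a) + (\phi'(a))^2\bigr]$ it suffices to show $|\phi''(a)| \leq M(n,\alpha,R_0)$ on $|a| \leq \varepsilon_0$. The contributions from the two $\log\sinh$ terms equal $-(n-1)\rho^2\sinh^{-2}((1+a)\rho) - (n-1)s^2\sinh^{-2}((1+a)s)$, which is bounded since $\sigma/\sinh\sigma$ is bounded on $(0,R_0]$. The $\log h_a$ piece is handled by the same cancellation structure as for $G$, applied one order higher. Shrinking $\varepsilon_0$ so that $\varepsilon_0(D' + M) \leq 1$ guarantees $F(\tau) \leq e\,F(0)$ on $|\tau| \leq \varepsilon_0$, whence $|F''(\tau)| \leq eF(0)(M + (D')^2)$. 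Plugging into Taylor's formula with integral remainder,
\begin{equation*}
F(a) = F(0) + aF'(0) + \int_0^a (a-\tau)F''(\tau)\,d\tau,
\end{equation*}
and bounding the remainder by its absolute value yields $F(a) \geq F(0)\bigl[1 + aG + a^2 D\bigr]$ with $D$ a constant depending only on $n$, $\alpha$ and $R_0$ (of either sign; to extract a prescribed sign one refines the estimate on $F''(\tau)$ pointwise using the explicit expression for $\phi''$).

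The main obstacle is the uniform control of the derivatives of $\log h_a$ near the singular locus where $h \to 0$. Verifying that the would-be singular factors in $h_\rho$, $h_s$, $h_{\rho\rho}$, and their mixed analogues cancel at the correct order against $h$ and $h^2$ respectively requires a careful joint Taylor expansion in $\rho - s$ and $|x-y|^2$; once this degeneracy is resolved, the rest of the proof reduces to routine one-variable estimates on a compact interval.
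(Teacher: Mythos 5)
Your proposal is correct, and it rests on the same analytic core as the paper's proof: a second-order Taylor expansion in the dilation parameter whose coefficients must be bounded uniformly, the only genuine difficulty being the degeneracy of $h=\mathrm{arccosh}\left[\cosh(\rho-s)+\tfrac12\sinh\rho\sinh s\,|x-y|^2\right]$ on the locus $\rho=s$, $|x-y|=0$. You resolve this exactly as the paper does: your observation that $\rho h_\rho+s h_s$ (and its once-differentiated analogue $\rho^2W_{\rho\rho}+2\rho sW_{\rho s}+s^2W_{ss}$, which equals $(\rho-s)^2\cosh(\rho-s)+O(|x-y|^2\sinh\rho\sinh s)$) is controlled by $W^2-1=\sinh^2h$ is precisely the paper's statement that the perturbations $b^k_{|x-y|}$ are $O(F^2(0)-1)=O(\mathrm{arccosh}^2(F(0)))$. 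The packaging differs: you take a single logarithmic derivative of $f$ and apply Taylor's formula with integral remainder, whereas the paper separately expands the numerator from below, the argument of $\mathrm{arccosh}$ from above, then $\mathrm{arccosh}$ itself, and finally the quotient $(1+tua_1+t^2u^2a_2)(1+tua^3+t^2u^2a^4)^{-\alpha}$. Your organization is cleaner and isolates the cancellation in one place; the paper's yields the explicit first-order coefficient $a_1=(n-1)(\rho\coth\rho+s\coth s)$ directly. One caveat: the lemma asserts a \emph{positive} constant $D$, and your parenthetical claim that a prescribed sign can be extracted by refining the pointwise estimate on $F''$ is not justified --- a lower bound of the stated form with $D>0$ would require a uniform convexity estimate $F''(\tau)\geq 2DF(0)$, which neither your argument nor the paper's establishes. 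This is immaterial for the application in Lemma \ref{Lem-appen_0}, which uses only the uniform boundedness of $G$ and $D$, but you should state your conclusion as $F(a)\geq F(0)(1+aG-a^2|D|)$ rather than promise a sign.
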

\begin{proof}
	Firstly, by Taylor expansion, we have 
	\begin{align*}
		\sinh^{n-1}\left[(1+tu)\rho\right]&\geq\sinh^{n-1}\rho\left[1+(n-1)tu\rho\coth\rho\right],\\
		\sinh^{n-1}\left[(1+tu)s\right]&\geq\sinh^{n-1}s\left[1+(n-1)tu s\coth s\right]
	\end{align*}
	and hence
	\begin{equation}\label{Eq-b1}
		\sinh^{n-1}\left[(1+tu)\rho\right]\sinh^{n-1}\left[(1+tu)s\right]\geq \sinh^{n-1}\rho\sinh^{n-1}s(1+tu a_1(\rho,s)+t^2u^2a_2(\rho,s)),
	\end{equation}
	where 
	\begin{align}
		a_1(\rho,s)&=(n-1)(\rho\coth\rho+s\coth s),\label{Eq-b2}\\
		a_2(\rho,s)&=(n-1)^2\rho s\coth\rho\coth s,\label{Eq-b3}
	\end{align}
	and $a_1(\rho,s), a_2(\rho,s)$ are all uniformly bounded from above by a constant depending only on $n$ and $R_0$. Secondly, also by a direct Taylor expansion, we find
	\begin{align*}
		\cosh\left[(1+tu)(\rho-s)\right]&\leq \cosh(\rho-s)+tu(\rho-s)\sinh(\rho-s)+\frac{t^2u^2}{2}\cosh\left(\frac{3}{2}R_0\right)(\rho-s)^2,\\
		\sinh\left[(1+tu)\rho\right]&\leq \sinh\rho+tu\rho\cosh\rho+\frac{t^2u^2}{2}\rho^2\sinh\left(\frac{3}{2}\rho\right),\\
		\sinh\left[(1+tu)s\right]&\leq \sinh s+tu s\cosh s+\frac{t^2u^2}{2}s^2\sinh\left(\frac{3}{2}s\right),
	\end{align*}
	which gives
	\begin{align}
		&\cosh\left[(1+tu)(\rho-s)\right]+\frac{\sinh\left[(1+tu)\rho\right]	\sinh\left[(1+tu)s\right]}{2}|x-y|^2\notag\\
		\leq&\cosh(\rho-s)+\frac{\sinh\rho\sinh s}{2}|x-y|^2+\sum_{k=1}^4{t^k u^k}b^k_{|x-y|}(\rho,s),\label{Eq-b4}
	\end{align}
	where the functions $b^k_{|x-y|}(\rho,s)(1\leq k\leq 4)$ are defined as
	\begin{align*}
		b^1_{|x-y|}(\rho,s)&=(\rho-s)\sinh(\rho-s)+\frac{|x-y|^2}{2}(s\cosh s\sinh\rho+\rho\cosh\rho\sinh s),\\
		b^2_{|x-y|}(\rho,s)&=\frac{1}{2}\cosh\left(\frac{3}{2}R_0\right)(\rho-s)^2+\frac{|x-y|^2}{4}\left[s^2\sinh\left(\frac{3}{2}s\right)\sinh\rho+2\rho s\cosh\rho\cosh s+\rho^2\sinh\left(\frac{3}{2}\rho\right)\sinh s\right],\\
		b^3_{|x-y|}(\rho,s)&=\frac{|x-y|^2}{4}\rho s\left[s\cosh\rho\sinh\left(\frac{3}{2}s\right)+\rho\cosh s\sinh\left(\frac{3}{2}\rho\right)\right],\\
		b^4_{|x-y|}(\rho,s)&=\frac{|x-y|^2}{8}\rho^2 s^2\sinh\left(\frac{3}{2}\rho\right)\sinh\left(\frac{3}{2}s\right).
	\end{align*} 
	Denote the right hand side of \eqref{Eq-b4} as $F(t)$, then we expand $\mathrm{arccosh}(F(t))$ at the point $\mathrm{arccosh}(F(0))$. We get
	\begin{equation}\label{Eq-b5}
		\mathrm{arccosh}(F(t))=\mathrm{arccosh}(F(0))+tu\frac{b^1_{|x-y|}(\rho,s)}{\left[F^2(0)-1\right]^{\frac{1}{2}}}+\frac{F''(\tau)}{2}t^2
	\end{equation}
	for some point $\tau\in(0,t)$. Since it can be directly checked that for any $\tau\in(0,t)$,
	\begin{equation*}
		F^2(\tau)-1=O(F^2(0)-1),
	\end{equation*}
	and for all $1\leq k\leq 4$, 
	\begin{align*}
		b^k_{|x-y|}(\rho,s)&=O(F^2(0)-1),\\
		b^k_{|x-y|}(\rho,s)&=O(\mathrm{arccosh}^2(F(0))),
	\end{align*}
 hence
	\begin{align}
		&\mathrm{arccosh}\left\{\cosh\left[(1+tu)(\rho-s)\right]+\frac{\sinh\left[(1+tu)\rho\right]	\sinh\left[(1+tu)s\right]}{2}|x-y|^2\right\}\notag\\
		\leq&\mathrm{arccosh}(F(t))\notag\\
		\leq&\mathrm{arccosh}(F(0))\left[1+tu a^3_{|x-y|}(\rho,s)+t^2u^2a^4_{|x-y|}(\rho,s)\right].\label{Eq-b6}
	\end{align}
	Here $a^3_{|x-y|}(\rho,s), a^4_{|x-y|}(\rho,s)$ are two functions which are all uniformly bounded from above by a constant depending on $R_0$. Combining \eqref{Eq-b1} and \eqref{Eq-b6} gives
	\begin{equation}\label{Eq-b7}
			f_{|x-y|}\left((1+tu(x))\rho,(1+tu(x))s\right)\geq f_{|x-y|}(\rho,s)\frac{1+tu a_1(\rho,s)+t^2u^2a_2(\rho,s)}{\left(1+tu a^3_{|x-y|}(\rho,s)+t^2u^2a^4_{|x-y|}(\rho,s)\right)^{\alpha}}.
	\end{equation}
	Then the conclusion of this lemma follows from Taylor expansion of the right hand side of \eqref{Eq-b7} once more.
\end{proof}
\begin{lem}\label{Lem-appen_0}
	Under the choice of $\varepsilon_0$ in Lemma \ref{Lem-appen--1}, there exists a constant $L'$ depending on $n,\alpha$ and $R_0$, such that 
	\begin{equation}
		NL_{\alpha}(B_r)-I\leq t^2L'\sinh^{2n-\alpha} r||u||^2_{L^2(\mathbb{S}^{n-1})}.
	\end{equation}
\end{lem}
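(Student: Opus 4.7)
The plan is to insert the pointwise lower bound from Lemma~\ref{Lem-appen--1} into the representation \eqref{Eq-a4} of $I$ and then estimate the resulting difference $NL_{\alpha}(B_r)-I$ order by order in $tu$. Concretely, combining Lemma~\ref{Lem-appen--1} with \eqref{Eq-a4} gives
\begin{equation*}
NL_{\alpha}(B_r)-I\leq \int_{\mathbb{S}^{n-1}}\int_{\mathbb{S}^{n-1}}\int_0^r\int_0^r f_{|x-y|}(\rho,s)\,\bigl[1-(1+tu(x))^2\bigl(1+tu(x)G_{|x-y|}+t^2u(x)^2 D\bigr)\bigr]\,d\rho\,ds\,d\sigma_x\,d\sigma_y.
\end{equation*}
Expanding the bracket in powers of $tu(x)$ produces a linear term $-(G_{|x-y|}+2)tu(x)$, a quadratic term $-(2G_{|x-y|}+D+1)t^2u(x)^2$, and cubic/quartic remainders. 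The quadratic and remainder terms will be estimated by sup-norm bounds, while the linear-in-$u$ term is the one that requires care; this is the main obstacle.

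First I would take care of the linear term. Two facts cooperate: (i) by rotational invariance of $\mathbb{H}^n$, for fixed $\rho,s$ the quantity $A(\rho,s):=\int_{\mathbb{S}^{n-1}}f_{|x-y|}(\rho,s)(G_{|x-y|}+2)\,d\sigma_y$ is independent of $x\in\mathbb{S}^{n-1}$, so the linear contribution reduces to $-t\bigl(\int_0^r\int_0^r A(\rho,s)\,d\rho\,ds\bigr)\int_{\mathbb{S}^{n-1}}u\,d\sigma_x$; (ii) the volume constraint $|E_t|_g=|B_r|_g$, i.e.\ $\int_{\mathbb{S}^{n-1}}\bigl[\int_0^{r(1+tu)}\sinh^{n-1}\rho\,d\rho\bigr]d\sigma_x=\int_{\mathbb{S}^{n-1}}\int_0^r \sinh^{n-1}\rho\,d\rho\,d\sigma_x$, Taylor-expanded in $tu$, yields
\begin{equation*}
rt\sinh^{n-1}r\int_{\mathbb{S}^{n-1}}u\,d\sigma_x+\frac{(n-1)r^2t^2}{2}\sinh^{n-2}r\cosh r\int_{\mathbb{S}^{n-1}}u^2\,d\sigma_x+O(t^3\|u\|_{L^2}^2)=0,
\end{equation*}
so $\bigl|\int u\,d\sigma_x\bigr|\leq C(R_0)\,t\,\|u\|_{L^2(\mathbb{S}^{n-1})}^2$. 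Thus the entire linear contribution is of order $t^2\|u\|_{L^2}^2$, and it only remains to bound $\int_0^r\int_0^r|A(\rho,s)|\,d\rho\,ds$.

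For this bound, and for the quadratic term directly, I would use $|G_{|x-y|}|\leq D'$ from Lemma~\ref{Lem-appen--1} together with the identity
\begin{equation*}
\int_{\mathbb{S}^{n-1}}\int_0^r\int_0^r f_{|x-y|}(\rho,s)\,d\rho\,ds\,d\sigma_y=\int_0^r\sinh^{n-1}\rho\,v_{B_r,\alpha}(\tilde x(\rho))\,d\rho,
\end{equation*}
where $\tilde x(\rho)$ denotes the point of geodesic distance $\rho$ from the center in direction $x$. Applying Lemma~\ref{Lem-Bound} (with $\bar m=|B_r|_g$) gives $v_{B_r,\alpha}\leq C(n,\alpha,R_0)r^{n-\alpha}\leq C\sinh^{n-\alpha}r$, and the elementary estimate $\int_0^r\sinh^{n-1}\rho\,d\rho\leq \sinh^{n}r$ (coming from $\rho\leq\sinh\rho$) then produces the overall factor $\sinh^{2n-\alpha}r$. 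The quadratic and linear terms are therefore each bounded by $Ct^2\sinh^{2n-\alpha}r\|u\|_{L^2}^2$.

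Finally, the cubic and quartic remainders are absorbed using $|tu|\leq \varepsilon_0$ (from $t\leq2\varepsilon_0$ and $\|u\|_{\infty}\leq\tfrac12$): one has $(tu)^k\leq \varepsilon_0^{\,k-2}(tu)^2$ for $k=3,4$, so those terms satisfy the same $Ct^2\sinh^{2n-\alpha}r\|u\|_{L^2}^2$ estimate. Summing the linear, quadratic, and higher-order contributions yields the desired inequality with some $L'=L'(n,\alpha,R_0)$, completing the proof of Lemma~\ref{Lem-appen_0}. The principal technical obstacle throughout is the cancellation of the linear-in-$u$ term via the volume constraint; once rotational symmetry reduces that piece to a single mean value of $u$, the constraint gives the needed quadratic decay.
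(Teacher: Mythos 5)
Your proposal is correct and follows essentially the same route as the paper's proof: insert the pointwise bound of Lemma \ref{Lem-appen--1} into \eqref{Eq-a4}, expand in powers of $tu$, kill the linear term via rotational invariance plus the volume-constraint estimate $\bigl|\int_{\mathbb{S}^{n-1}}u\,d\sigma\bigr|\lesssim t\|u\|_{L^2}^2$, bound the radial integrals by $\sinh^{2n-\alpha}r$ through the estimate on $v_{B_r,\alpha}$ from Lemma \ref{Lem-Bound}, and absorb the cubic and quartic remainders using $|tu|\leq\varepsilon_0$. The only cosmetic difference is that you derive the mean-value estimate for $u$ by expanding the volume constraint directly, where the paper cites the corresponding identity from B\"ogelein--Duzaar--Scheven.
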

\begin{proof}
	In the absence of ambiguity, the function $G_{|x-y|}(\alpha,\rho,s)$ is abbreviated as $G$ in the following of this proof. By Lemma \ref{Lem-appen--1}, we see that 
	\begin{align}
		NL_{\alpha}(B_r)-I\leq& -t\int_{\mathbb{S}^{n-1}}u\,d\sigma_x\int_{\mathbb{S}^{n-1}}\,d\sigma_y\int_0^{r}\int_0^{r}(G+2)f_{|x-y|}(\rho,s)\,d\rho\,ds\notag\\
		&-t^2\int_{\mathbb{S}^{n-1}}u^2\,d\sigma_x\int_{\mathbb{S}^{n-1}}\,d\sigma_y\int_0^{r}\int_0^{r}(D+2G+1)f_{|x-y|}(\rho,s)\,d\rho\,ds\notag\\
		&-t^3\int_{\mathbb{S}^{n-1}}u^3\,d\sigma_x\int_{\mathbb{S}^{n-1}}\,d\sigma_y\int_0^{r}\int_0^{r}(2D+G)f_{|x-y|}(\rho,s)\,d\rho\,ds\notag\\
		&-Dt^4\int_{\mathbb{S}^{n-1}}u^4\,d\sigma_x\int_{\mathbb{S}^{n-1}}\,d\sigma_y\int_0^{r}\int_0^{r}f_{|x-y|}(\rho,s)\,d\rho\,ds\label{Eq-a6}
	\end{align}
	Note that we have the following two observations. Firstly, since $|E_t|_g=|B_r|_g$, we have (cf. \cite[Equation (4.5)]{Bogelein15})
	\begin{equation}\label{Eq-a7}
		\left|t\int_{\mathbb{S}^{n-1}}u\,d\sigma_x\right|\leq D_1t^2||u||^2_{L^2(\mathbb{S}^{n-1]})},
	\end{equation}
	for some constant $D_1$ depending on $n$ and $R_0$. Secondly, for any fixed $x\in\mathbb{S}^{n-1}$, the integral $\int_{\mathbb{S}^{n-1}}\,d\sigma_y\int_0^{r}\int_0^{r}f_{|x-y|}(\rho,s)\,d\rho\,ds$ and $\int_{\mathbb{S}^{n-1}}\,d\sigma_y\int_0^{r}\int_0^{r}Gf_{|x-y|}(\rho,s)\,d\rho\,ds$ are only dependent on $|x|$, hence we have
	\begin{equation}\label{Eq-a8}
		\int_{\mathbb{S}^{n-1}}\,d\sigma_y\int_0^{r}\int_0^{r}f_{|x-y|}(\rho,s)\,d\rho\,ds=\frac{NL_{\alpha}(B_r)}{\omega_{n-1}}\leq D_2\sinh^{2n-\alpha}r
	\end{equation}
	and 
	\begin{equation}\label{Eq-a9}
		\int_{\mathbb{S}^{n-1}}\,d\sigma_y\int_0^{r}\int_0^{r}Gf_{|x-y|}(\rho,s)\,d\rho\,ds\leq D'D_2\sinh^{2n-\alpha} r.
	\end{equation}
	Here $D_2$ is a constant depending on $n,\alpha$ and $R_0$ due to the inequality \eqref{In-b1}. Hence combining \eqref{Eq-a6}-\eqref{Eq-a9} with the fact that $|tu|\leq\varepsilon_0\leq\frac{1}{2}$ gives the lemma.
\end{proof}
\begin{lem}\label{Lem-appen}
	%Let $f_{\theta}(\rho,s)$ be the function defined in \eqref{Eq-F5} with $\alpha\in(0,n)$, $R_0>0$ is a fixed constant, $u\in C^1(\mathbb{S}^{n-1})$ satisfiying $||u||_{C^1(\mathbb{S}^{n-1})}\leq\frac{1}{2}$. 
	Assume that $x$ and $y$ are two arbitrary points in $\mathbb{S}^{n-1}$, the values of $\rho, s$ are taken between $u(x)$ and $u(y)$ and $0<\varepsilon_0\leq\frac{1}{2}$. Then for any $\tau\in(0,2\varepsilon_0)$ and $r\in(0,R_0]$, we have 
	\begin{equation}
			\left|\frac{\partial f_{|x-y|}(r(1+\tau\rho),r(1+\tau s))}{\partial\tau}\right|\leq L''(n,\alpha,R_0)f_{|x-y|}(r,r)
	\end{equation}
for some positive constant $L''$ depending on $n,\alpha$ and $R_0$.
\end{lem}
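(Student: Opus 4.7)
My plan is to work at the level of logarithmic derivatives. Writing $A=r(1+\tau\rho)$, $B=r(1+\tau s)$, $h=|x-y|$, and $M=\mathrm{arccosh}(w)$ with $w=\cosh(A-B)+\sinh A\sinh B\cdot h^2/2$, a direct computation yields
\[
\partial_\tau\log f_{|x-y|}(A,B)=(n-1)\bigl(r\rho\coth A+rs\coth B\bigr)-\alpha\,\frac{\partial_\tau w}{M\sinh M}.
\]
Since $|\tau\rho|,|\tau s|\leq\varepsilon_0\leq 1/2$, the points $A,B$ lie in $[r/2,3r/2]\subset(0,3R_0/2]$, and using $\sinh A\geq A\geq r/2$ the ``polynomial'' group is bounded by $2\cosh(3R_0/2)$, a constant depending only on $R_0$. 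The whole task therefore reduces to bounding $|\partial_\tau w|/(M\sinh M)$ uniformly and then comparing $f_{|x-y|}(A,B)$ with $f_{|x-y|}(r,r)$.

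Expanding and using $\sinh A\cosh B\pm\cosh A\sinh B=\sinh(A\pm B)$ I would write
\[
\partial_\tau w=r(\rho-s)\sinh(A-B)\Bigl(1-\frac{h^2}{4}\Bigr)+\frac{h^2\,r(\rho+s)\sinh(A+B)}{4}.
\]
The crucial input is the hypothesis $||u||_{C^1(\mathbb{S}^{n-1})}\leq 1/2$: since $\rho,s$ lie between $u(x)$ and $u(y)$, the mean value theorem on $\mathbb{S}^{n-1}$ gives
\[
|\rho-s|\leq|u(x)-u(y)|\leq\frac{1}{2}\,d_{\mathbb{S}^{n-1}}(x,y)\leq\frac{\pi}{4}h.
\]
In particular $|A-B|=r\tau|\rho-s|\leq(\pi/4)rh$, and combined with the monotonicity of $t\mapsto\sinh t/t$ on $(0,3R_0]$ each summand of $\partial_\tau w$ picks up an extra factor of $r$ beyond the obvious $h^2$, giving $|\partial_\tau w|\leq C_1(n,R_0)\,r^2 h^2$. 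For the denominator I would use the identity $w-1=\sinh^2\frac{A-B}{2}(2-\frac{h^2}{2})+\sinh^2\frac{A+B}{2}\cdot\frac{h^2}{2}$ (coming from $\sinh A\sinh B=\sinh^2\frac{A+B}{2}-\sinh^2\frac{A-B}{2}$) together with $h\leq 2$ and $(A+B)/2\geq r/2$ to get $w-1\geq r^2 h^2/8$; plugging this into the estimate $M\sinh M\geq c_0(R_0)(w-1)$ already used in the proof of Theorem~\ref{Thm-Fuglede} yields $M\sinh M\geq c_0 r^2 h^2/8$. Dividing, $|\partial_\tau M/M|\leq C_2(n,R_0)$ and hence $|\partial_\tau\log f_{|x-y|}(A,B)|\leq C_3(n,\alpha,R_0)$.

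To upgrade this to the claimed multiplicative bound, I would check that
\[
\frac{f_{|x-y|}(A,B)}{f_{|x-y|}(r,r)}=\Bigl(\frac{\sinh A\sinh B}{\sinh^2 r}\Bigr)^{\!n-1}\Bigl(\frac{M_0}{M}\Bigr)^{\!\alpha},\quad M_0:=\mathrm{arccosh}\Bigl(1+\frac{\sinh^2 r}{2}h^2\Bigr),
\]
is bounded by a constant depending only on $n,\alpha,R_0$: the first factor is controlled because $A,B$ are comparable to $r$, while the same chain $M\sinh M\geq c_0(w-1)$ and $M^2\leq 2(w-1)$ applied both to $M$ and to $M_0$ produces two-sided bounds $M,M_0\asymp rh$ with constants depending only on $R_0$. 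Multiplying then gives $|\partial_\tau f_{|x-y|}|\leq L''(n,\alpha,R_0)\,f_{|x-y|}(r,r)$. The main obstacle is precisely the Lipschitz bookkeeping in the middle step: a naive estimate that ignores the smallness of $|\rho-s|$ yields only $|\partial_\tau w|=O(rh^2)$, leading to $|\partial_\tau M/M|=O(1/r)$ and a bound on $|\partial_\tau f|$ that diverges as $r\to 0$. Recognizing that the $C^1$-smallness of $u$ forces $|\rho-s|=O(h)$, and that this gains the extra factor of $r$ which renders the estimate scale-invariant in both $r$ and $|x-y|$, is the delicate point on which the whole argument rests.
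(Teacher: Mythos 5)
Your proposal is correct and is essentially the paper's own argument in a cleaner wrapper: the decisive points — the antisymmetric cancellation that pairs $\sinh(A-B)$ with the factor $\rho-s$, the Lipschitz input $|\rho-s|\leq|u(x)-u(y)|\lesssim|x-y|$ giving the square $\bigl(|u(x)-u(y)|/|x-y|\bigr)^2$, the two-sided comparison of the $\mathrm{arccosh}$ term with $r|x-y|$ via $w-1\asymp r^2|x-y|^2$, and the comparability of $f_{|x-y|}(A,B)$ with $f_{|x-y|}(r,r)$ — are exactly those of the paper's estimates \eqref{Eq-App2}--\eqref{Eq-App6}, with the paper's splitting into $\mathrm{I}_A,\mathrm{I}_B,\mathrm{II}_A,\mathrm{II}_B$ replaced by your logarithmic derivative. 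The only cosmetic slip is that your bound $2\cosh(3R_0/2)$ on the "polynomial" group omits the harmless factor $n-1$.
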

\begin{proof}
	Define $A=r(1+\tau\rho)$, $B=r(1+\tau s)$ and denote $\theta:=|x-y|$ for simplicity, then
	\begin{align}
		f_{\theta}(A,B)&=\sinh^{n-1}A\sinh^{n-1}B\,\text{arccosh}^{-\alpha}\left[\cosh(A-B)+\frac{\sinh A\sinh B}{2}\theta^2\right],\\
		\frac{\partial f_{\theta}(A,B)}{\partial\tau}&=\frac{\partial f_\theta}{\partial A}\frac{\partial A}{\partial\tau}+\frac{\partial f_\theta}{\partial B}\frac{\partial B}{\partial\tau}=r\left[\rho\frac{\partial f_\theta}{\partial A}+s\frac{\partial f_\theta}{\partial B}\right],
	\end{align}
	where 
	\begin{align}
		\frac{\partial f_{\theta}}{\partial A}=&(n-1)\sinh^{n-2}A\sinh^{n-1}B\cosh A\,\text{arccosh}^{-\alpha}\left[\cosh(A-B)+\frac{\sinh A\sinh B}{2}\theta^2\right]\notag\\
		&-\alpha\sinh^{n-1}A\sinh^{n-1}B\,\text{arccosh}^{-\alpha-1}\left[\cosh(A-B)+\frac{\sinh A\sinh B}{2}\theta^2\right]\notag\\
		&\times\left\{\left[\cosh(A-B)+\frac{\sinh A\sinh B}{2}\theta^2\right]^2-1\right\}^{-\frac{1}{2}}\left(\sinh(A-B)+\frac{\cosh A\sinh B}{2}\theta^2\right)\notag\\
		=:&\text{I}_A-\text{II}_A,
	\end{align}
	and by symmetry
	\begin{align}
		\frac{\partial f_{\theta}}{\partial B}=&(n-1)\sinh^{n-2}B\sinh^{n-1}A\cosh B\,\text{arccosh}^{-\alpha}\left[\cosh(A-B)+\frac{\sinh A\sinh B}{2}\theta^2\right]\notag\\
		&-\alpha\sinh^{n-1}A\sinh^{n-1}B\,\text{arccosh}^{-\alpha-1}\left[\cosh(A-B)+\frac{\sinh A\sinh B}{2}\theta^2\right]\notag\\
		&\times\left\{\left[\cosh(A-B)+\frac{\sinh A\sinh B}{2}\theta^2\right]^2-1\right\}^{-\frac{1}{2}}\left(\sinh(B-A)+\frac{\cosh B\sinh A}{2}\theta^2\right)\notag\\
		=:&\text{I}_B-\text{II}_B.
	\end{align}
	Hence we have
	\begin{align}
		\left|\frac{\partial f_{\theta}}{\partial\tau}\right|&=r\left|(\rho\text{I}_A+s\text{I}_B)-(\rho\text{II}_A+s\text{II}_B)\right|\notag\\
		&\leq\frac{r}{2}\left(\text{I}_A+\text{I}_B\right)+r|\rho\text{II}_A+s\text{II}_B|.\label{Eq-App1}
	\end{align}
In the following, we prove that $r\text{I}_A$ (then by symmety, $r\text{I}_B$) and $r|\rho\text{II}_A+s\text{II}_B|$ can be controlled by $f_{\theta}(r,r)$ respectively. Firstly, it is obvious that there exists two constant $K_1$, $K_2$ depending on $R_0$, such that $r\leq\sinh r\leq K_1 r$ and $\sinh r\leq \sinh \frac{3}{2}r\leq K_2\sinh r$ for $0\leq r\leq R_0$. Hence we know that $\sinh A\leq \sinh\frac{3}{2}r\leq K_2\sinh r$ and also $\sinh B\leq K_2\sinh r$. On the other hand, we have
\begin{align}
	\mathrm{arccosh}\left[\cosh(A-B)+\frac{\sinh A\sinh B}{2}\theta^2\right]&\geq\mathrm{arccosh}\left[1+\frac{\sinh^2(\frac{r}{2})}{2}\theta^2\right]\notag\\
	&\geq \mathrm{arccosh}\left(1+\frac{r^2\theta^2}{8}\right)\notag\\
	&\geq K_3 \mathrm{arccosh}\left(1+\frac{K_1^2}{2}r^2\theta^2\right)\notag\\
	&\geq K_3\mathrm{arccosh}\left(1+\frac{\sinh^2 r}{2}\theta^2\right)\label{Eq-App2}
\end{align}
for some constant $K_3$ depending only on $R_0$. The third inequality is due to $0\leq \xi:=\theta r\leq 2R_0$ and 
\begin{equation*}
	\lim_{\xi\to 0}\frac{\mathrm{arccosh}\left(1+\frac{\xi^2}{8}\right)}{\mathrm{arccosh}\left(1+\frac{K_1^2}{2}\xi^2\right)}=\frac{1}{2K_1}.
\end{equation*}
From the above argument, we arrive at
\begin{equation}\label{Eq-App2.5}
	\frac{r}{2}\left(\text{I}_A+\text{I}_B\right)\leq (n-1)K_2^{2n-3}K_3^{-\alpha}\cosh\left(\frac{3}{2}R_0\right)f_{\theta}(r,r).
\end{equation}
For the term $|\rho\text{II}_A+s\text{II}_B|$, we do further estimates: Note that $0\leq \sinh r\cdot\theta\leq 2\sinh R_0$, then there exists a constant $K_4>1$, depending only on $R_0$, such that 
\begin{equation}\label{Eq-App3}
	\mathrm{arccosh}\left(1+\frac{\sinh^2 r}{2}\theta^2\right)\geq \frac{\sinh r\cdot\theta}{K_4}\geq\frac{r\theta}{K_4}.
\end{equation}
Also, we have 
\begin{align}
	&\left[\cosh(A-B)+\frac{\sinh A\sinh B}{2}\theta^2\right]^2-1\notag\\
	\geq&\left(1+\frac{\sinh A\sinh B}{2}\theta^2\right)^2-1\notag\\
	\geq&\sinh A\sinh B\cdot\theta^2\notag\\
	\geq&\sinh^2\left(\frac{r}{2}\right)\cdot\theta^2\geq\frac{r^2\theta^2}{4}.\label{Eq-App4}
\end{align}
Then combining \eqref{Eq-App2}, \eqref{Eq-App3} and \eqref{Eq-App4}, we have
\begin{equation}\label{Eq-App5}
  r|\rho\text{II}_A+s\text{II}_B|\leq 2\alpha K_1 K_2^{2n-2} K_3^{-\alpha-1}K_4\left[\frac{\left(u(x)-u(y)\right)^2}{|x-y|^2}+\frac{K_2}{2}\cosh\left(\frac{3}{2}R_0\right)\right]f_{\theta}(r,r).
\end{equation}
Since it can be easily checked that the geodesic distance $d_{\mathbb{S}^{n-1}}(x,y)$ between $x$ and $y$ in $\mathbb{S}^{n-1}$ satisfies $|x-y|\leq d_{\mathbb{S}^{n-1}}(x,y)\leq \frac{\pi}{2}|x-y|$
, and also observe that $||u||_{C^1(\mathbb{S}^{n-1})}\leq\frac{1}{2}$, then there exists a constant $K_5$, depending only on the upper bound of the $C^1$ norm of $u$, such that 
\begin{equation}\label{Eq-App6}
	\frac{|u(x)-u(y)|}{|x-y|}\leq K_5.
\end{equation}
Combining \eqref{Eq-App1}, \eqref{Eq-App2.5}, \eqref{Eq-App5} and \eqref{Eq-App6} yields the assertion of this lemma.
\end{proof}
\noindent\textbf{Data Availability Statement.} Data sharing not applicable to this article as no datasets were generated or analysed during the current study.

\noindent\textbf{Conflict of interest.} On behalf of all authors, the corresponding author states that there is no conflict of
interest.

\end{document}